\newcommand\independent{\protect\mathpalette{\protect\independenT}{\perp}}
\def\independenT#1#2{\mathrel{\rlap{$#1#2$}\mkern2mu{#1#2}}}
\newcommand*{\rom}[1]{\expandafter\@slowromancap\romannumeral #1@}
\newtheorem{theorem}{Theorem}
\newtheorem{corollary}{Corollary}[theorem]
\newtheorem{lemma}[theorem]{Lemma}
\newtheorem*{remark}{Remark}
\title{Statistical Independence and the Brockwell Transform---From an Integral Equation Perspective}
\author{Xingzhi Wang\footnote{\href{mailto:xingzhi-wang@uiowa.edu}{xingzhi-wang@uiowa.edu}}}
\begin{document}
    \maketitle

    \begin{abstract}
    Statistical independence is a notion ubiquitous in various fields such as in statistics, probability, number theory and physics.
    We establish the stability of independence for any pair of random variables by their corresponding Brockwell transforms \citep{brockwell2007universal}
    beyond the non-atomic condition that is naturally imposed on their distributions, 
    thereby generalizing the proposition originated by \cite{cai2022distribution}. 
    A central novelty in our work is to formulate the problem as a possibly new type of mathematical inverse problem, 
    which aims to claim that an integral equation has a unique solution in terms of its stochastic kernel---not under-determined contrary to the usual cases, 
    and also to design a recursive constructive scheme, combined with the property of the quantile function, 
    that solves the aforementioned integral equation in an iterative manner.
    \end{abstract}

\section{Introduction}

Testing statistical independence has been an important topic in statistical literature, 
and it may provide and suggest heuristics in numerical studies of different phenomenons in other areas as well.
Several novel measures of independence beyond the linear Pearson's correlation coefficient 
have been proposed in the twenty-first century, and may excel at finding out complex correlations in the data, 
for example, the distance correlation considered in \cite{szekely2007measuring}.
Corresponding new computable statistics to estimate the population-level construction may serve as tools for ranking variable in the problem of feature screening. 

However, various assumptions on the underlying distribution family of the tested pair of random variables 
are still necessary to validate the construction of the population-level correlation, 
where some of them are regarding finite moment conditions while others are restricting the specific type of distribution.
Thus, the case where the marginal distributions of the pair involve a mixture for which at least one of them does not have finite moments 
is still largely left unexplored, and such a situation may be prevalent across multiple areas like insurance claims in insurance companies and survival time in medical studies.

One good idea, built from a remark of \cite{szekely2007measuring}, is to transform the pair under the test into a corresponding bounded one, 
which at the same time preserves the independence property of its marginals. 
Such a transform is naturally related to the distribution function of the pair, 
which may also help to standardize the distribution.
Combined with the transform given in \cite{brockwell2007universal}, the main theoretical question under consideration for the unconditional case can be briefly stated as follows: 
Given a pair of real-valued random variables $\left(X, Y\right)$ with marginal distribution functions $\left(F, G\right),$ 
along with another pair $\left(U, V\right)$ that is independent of $\left(X, Y\right),$ let their Brockwell transforms be $Z_X \coloneqq \left(1 - U\right)F\left(X-\right) + UF\left(X\right)$ and
$Z_Y \coloneqq \left(1 - V\right)G\left(Y-\right) + VG\left(Y\right),$ then finally we ask for conditions upon which $Z_X \independent Z_Y$ is sufficient for $X \independent Y,$ 
where the symbol $\independent$ denotes statistical independence.

We enlarge the distribution family for the proposition put forward in (\cite{cai2022distribution}, Proposition $1$ and Theorem $8$) that claims the aforementioned sufficiency result holds if $X, Y$ are either both non-atomic or both categorical (and also $\left(U, V\right)$ is uniform on the unit square $\left(0, 1\right)^2$), 
thereby greatly extending the applicability of various independence measures that exist in the literature. 
We identify this problem as characterizing the uniqueness of solutions for an integral equation in its stochastic kernel,
and fulfill this characterization by a nontrivial ad-hoc construction, combined with some general (likely new) properties of the quantile function (the statistical counterpart for the non-increasing rearrangement of a distribution function) we shall derive. 
The proven result applies both conditionally and unconditionally, and it may be generalized to a pair of vectors in Euclidean spaces.

Throughout our discussion, denote the set of natural numbers by $\mathbb{N} \coloneqq \left\{ 0, 1, 2, \cdots \right\}$ 
and correspondingly the set of positive integers by $\mathbb{N}^\star \coloneqq \left\{ 1, 2, 3, \cdots \right\}.$ For any set $A,$ 
let $\mathrm{id.}_A$ be the identity mapping on $A,$ and $\sqcup$ is defined to be the disjoint union logical operation on sets. 
Given a measure space $\left(\Omega, \mathcal{A}, \mu\right)$ and a (binary-valued) logical proposition $P\left(\omega\right)$ for $\omega \in \Omega,$ 
we write $P\ \mathrm{is\ true}\ \left[\mu\right]$ if there exists a $\mu$-null set $N \in \mathcal{A}$ 
such that the set comprehension $\left\{ \omega \in \Omega : P\left(\omega\right) \mathrm{is\ false}\right\}$ is contained in $N.$

\subsection{Roadmap and Sketches of Proofs}

The major consequences of our development are presented in Corollary~\ref{cor::1} and Corollary~\ref{cor::2} below, 
which establish the equivalence of independence between tuples of real-valued random variables 
and certain standardized transformations, i.e., Brockwell transforms, related to their marginal (cumulative) distribution functions
under a well-ordering condition for their sets of atoms. On the other hand, the crux of the underlying issue is stated in Theorem~\ref{thm::4}, 
which claims that given a real-valued random variable $X : \Omega \rightarrow \mathbb{R}$ with distribution function $F,$ a Borel space valued random map $\widetilde{Y} : \Omega \rightarrow M$ (The Borel space $M$ is equipped with the class $\mathcal{F}$ as its $\sigma$-algebra.)
and another auxiliary real-valued random variable $U : \Omega \rightarrow \mathbb{R},$ supported on $\left[0, +\infty\right)$ but places some mass over $\left(0, 1\right]$ with distribution function $H$ ($H\left(0\right) = 0, H\left(1\right) > 0$), which is independent of $X$ as well as conditionally independent of $\widetilde{Y}$ given $X.$ Denote $Z = \left(1 - U\right)F\left(X-\right) + UF\left(X\right)$ by the Brockwell transfrom of $X.$ Then we have 
    \begin{eqnarray}
        Z \independent \widetilde{Y} \Rightarrow X \independent \widetilde{Y} \label{(33)}
    \end{eqnarray} if there exists a strictly increasing or a strictly decreasing transformation from the set of atoms for $X$ into the set of positive integers $\mathbb{N}^\star.$
Since Theorem~\ref{thm::4} is formulated in a one-sided fashion, we could invoke this sufficiency one-at-a-time to obtain Corollary~\ref{cor::1}, 
whereas for Corollary~\ref{cor::2}, $\widetilde{Y}$ to apply therein is a pair, and the statement (i) in the last item of Lemma~\ref{lma::2} is additionally required for the claim of the conditional independence.

Starting from an equation solving perspective, our idea to interpret the implication~(\ref{(33)}) is the following: The condition $Z \independent \widetilde{Y}$ is equivalent to 
an integral equation of the form 
    \begin{equation}
        T_{\mu_F} \kappa = \beta, \label{(34)}
    \end{equation} where 
    \begin{enumerate}
        \item $\mu_F \coloneqq PX^{-1}$ is the Borel probability measure of $X$ and $\mathcal{K}$ denotes the class of 
              maps $\kappa: \mathbb{R} \times \mathcal{F} \rightarrow \left[-\infty, +\infty\right]$ such that
              \begin{enumerate}
                 \item[(i)] $\kappa\left(\cdot, A\right)$ is $\sigma\left(\tau_\rho\right)$-measurable for all $A \in \mathcal{F},$ where $\rho$ denotes the ordinary ruler metric on the real line $\mathbb{R},$ and
                 \item[(ii)] for every real number $x \in \mathbb{R}, \kappa\left(x, \cdot\right)$ is a finite signed measure on the Borel space $\left(M, \mathcal{F}\right)$ with uniformly bounded (total) variations
                             $$\sup_{x \in \mathbb{R}} \left\lVert \kappa\left(x, \cdot\right) \right\rVert = \sup_{x \in \mathbb{R}} \left|\kappa\right|\left(x, M\right) < +\infty.$$  
              \end{enumerate}

              The map $T_{\mu_F} : \mathcal{K} \rightarrow \mathcal{K}$ sends a fixed $\kappa \in \mathcal{K}$ to $T_{\mu_F} \kappa$ as follows:
              \begin{align}
                  T_{\mu_F} \kappa: \mathbb{R} \times \mathcal{F} &\rightarrow \mathbb{R} \nonumber \\
                                                \left(z, A\right) &\mapsto \int_{\mathbb{R}} \left(I_1 + I_2\right)\left(x, z\right)\kappa\left(x, A\right) \mu_F\left(dx\right), \label{(35)}
              \end{align} where 
              \begin{eqnarray}
                  I_1\left(x, z\right) &\coloneqq& H\left(\frac{z - \mu_F\left(\left(-\infty, x\right)\right)}{\mu_F\left(\left\{ x \right\}\right)}\right)\mathbb{I}_{D_F}\left(x\right), \label{(48)} \\
                  I_2\left(x, z\right) &\coloneqq& \mathbb{I}_{\left(-\infty, \overleftarrow{F}\left(z\right)\right] - D_F}\left(x\right) \label{(49)}
              \end{eqnarray} and $D_F$ is the set of (jump) discontinuities for $F$ as well as $\overleftarrow{F}$ denotes the right-continuous generalized inverse of $F.$ ($\left(-\infty, \overleftarrow{F}\left(z\right)\right]$ is understood as the empty set $\varnothing$ if $z \leq 0$ while it is as the universe $\mathbb{R}$ if $z \geq 1.$)

              Notice that\footnote{We leave the proofs of these observations to Appendix~\ref{pfa} to~\ref{pfb}.}
              \begin{enumerate}
                  \item[(i)] $\left(\mathcal{K}, \left\lVert \cdot \right\rVert\right)$ is a Banach space over $\mathbb{R},$ where $\left\lVert \kappa \right\rVert \coloneqq \sup_{x \in \mathbb{R}} \left\lVert \kappa\left(x, \cdot\right) \right\rVert.$ 
                             We may identify two elements $\kappa_1 \sim_{\mu_F} \kappa_2$ in $\mathcal{K}$ by $\mu_F$ whenever $\kappa_1\left(\cdot, A\right) = \kappa_2\left(\cdot, A\right)$ $\mu_F$-a.e. for all $A \in \mathcal{F}.$ (Each $\mu_F$ exceptional set may depend on $A.$)

                  \item[(ii)] $T_{\mu_F}$ is a linear operator on $\mathcal{K}.$\footnote{Therefore, the notation $T_{\mu_F} \kappa$ is justified instead of $T_{\mu_F}\left(\kappa\right).$}
              \end{enumerate}
        \item The specific $\kappa$ that appears in this setting is of the form $\kappa\left(x, A\right) \coloneqq P\left(\widetilde{Y} \in A\ |\ X = x\right)$ while $\beta\left(x, A\right) \coloneqq PZ^{-1}\left(\left(-\infty, x\right]\right) \cdot P\widetilde{Y}^{-1}\left(A\right),$ 
              both of which lie in the subclass $\mathcal{K}_{\geq 0}$ of all stochastic kernels from $\left(\mathbb{R}, \sigma\left(\tau_\rho\right)\right)$ to $\left(M, \mathcal{F}\right).$
        \item The necessity of $Z \independent \widetilde{Y}$ for $X \independent \widetilde{Y}$ yields one particular solution for the solution space of~(\ref{(34)}), i.e., the degenerate kernel (a kernel that jumps according to a distribution independent of the starting point) $\kappa_0\left(x, A\right) = P\widetilde{Y}^{-1}\left(A\right).$ 
              This is paraphrased as $T_{\mu_F} \kappa_0 = \beta.$
\end{enumerate}

In this language, to prove that $Z \independent \widetilde{Y}$ is sufficient for $X \independent \widetilde{Y}$ is equivalent 
to show that $\kappa_0$ is the unique solution to the integral equation~(\ref{(34)}). 
Note that the problem of interest here may not be applicable in the realm of Fredholm-Riesz-Schauder theory 
since usually the integral kernel in Fredholm type equation may not be unique given an integrand and a right-hand side term.

A novel scheme we create to solve~(\ref{(34)}) is to take full advantage of the free variable $z$ 
which helps transform it into a differential equation problem, and also to solve it in an inductive manner:
\begin{enumerate}
    \item[\textbf{Step 1.}] 
        First, we utilize the particular solution $\kappa_0$ given to turn this equation into a homogeneous form 
        \begin{equation}
            T_{\mu_F} \phi = 0 \label{(38)}
        \end{equation} with $\phi = \kappa - \kappa_0.$ 
        It then remains to show that $\mathrm{ker}\left(T_{\mu_F}\right) = \left\{ 0 \right\}$ in the identified Banach space $\mathcal{K} / \sim_{\mu_F}.$

        In the proof of Theorem~\ref{thm::4} to follow, we proceed with the specific instance of $\phi \in \mathrm{ker}\left(T_{\mu_F}\right)$ in the set-up~(\ref{(33)}), but the argument works for all kernels $\phi \in \mathrm{ker}\left(T_{\mu_F}\right).$ 
        In fact, if we do not require $\phi$ to be a member of $\mathcal{K},$ then $\phi = 0$ holds $\mu_F$-a.e. whenever $T_{\mu_F} \phi = 0$ as long as $\phi\left(\cdot, A\right)$ is $\sigma\left(\tau_\rho\right)$-measurable and $\int_{\mathbb{R}} \left|\phi\left(\cdot, A\right)\right| d\mu_F < +\infty$ 
        for each $A \in \mathcal{F}.$\footnote{$\int_{\mathbb{R}} \left|\phi\left(\cdot, A\right)\right| d\mu_F \leq \sup_{x \in \mathbb{R}} \left|\phi\right|\left(x, A\right) \leq \left\lVert \phi \right\rVert < +\infty$ if $\phi \in \mathcal{K}.$}
    \item[\textbf{Step 2.}] 
        Secondly, the integrands $I_1$ and $I_2$ contribute separately to each resulting kernel $T_{\mu_F} \kappa$ in~(\ref{(35)}), 
        where $I_1$ is involved with the purely atomic part of $\mu_F,$ whereas $I_2$ with its non-atomic part.

        Because of this observation, we need a starting point for the free variable $z$ to vary at which the effects of $I_1$ and of $I_2$ are (already) isolated in its sum form. 
        This is exactly the rationale for the well-ordering condition we impose, in particular by the order $\leq$ of the real line $\mathbb{R}$ that is compatible with the form of those initial intervals that appear in $I_1$ and in $I_2,$ on the set of (jump) discontinuities $D_F$ for the distribution function $F.$  
        Such a well-ordering condition leads naturally to an inductive proof on the set of natural numbers $\mathbb{N}.$

        Lemma~\ref{lma::1} below documents a simple consequence of the well-ordering by $\leq$ relation for (at most) countable sets on the real line.
    \item[\textbf{Step 3.}] 
        Following the motivation in Step $2,$ we assume that the first atom of the probability measure $\mu_F$ is $x_1 \leq +\infty$ ($x_1 = +\infty$ denotes the case when it is non-atomic, i.e., $\mu_F$ does not have any atom at all), 
        then $\mu_F$ is non-atomic on $\left(-\infty, x_1\right)$ and the integrand $I_1$ vanishes if $z \in \left(0, \mu_F\left(\left(-\infty, x_1\right)\right)\right].$

        Now, we could focus on the integrand $I_2,$ which turns out to be the most difficult part of the proof. 
        It is natural to substitute $z = F\left(w\right) \in \left(0, F\left(x_1-\right)\right)$ for any $w \in \left(-\infty, x_1\right)$ in~(\ref{(49)}) to expect for cancellation 
        $\left(\overleftarrow{F} \circ F\right)\left(w\right) = w,$ so that we obtain a rich class of test functions, 
        i.e., indicators of initial intervals of the form $\mathbb{I}_{\left(-\infty, w\right]},$ in the integrand to vary over.

        Unless $F$ is strictly increasing over $\left(-\infty, x_1\right),$ the above hope can only be realized in a $\mu_F$-almost everywhere sense, 
        which we derive as a more general conclusion stronger than the well-known statement about the probability integral transform in (i) of $5$ in Lemma~\ref{lma::2}. 
        (The above identity $\left(\overleftarrow{F} \circ F\right)\left(w\right) = w$ only makes rigorous sense for $\mu_F$-almost all $w \in F^{-1}\left(\left(0, 1\right)\right) \cap \left(-\infty, x_1\right),$ and the statement (ii) of $5$ in Lemma~\ref{lma::2} is in place to take into account the members of $F^{-1}\left(\left\{ 0 \right\}\right)$ as well as of $F^{-1}\left(\left\{ 1 \right\}\right).$)

        Then we have
        \begin{equation}
            \int_{\mathbb{R}} \mathbb{I}_{\left(-\infty, w\right]}\left(x\right)\phi\left(x, A\right) \mu_F\left(dx\right) = 0 \label{(36)}
        \end{equation} for every $A \in \mathcal{F}$ and for $\mu_F$-almost every $w$ in $\left(-\infty, x_1\right).$ 
        Heuristically, we would differentiate with respect to the variable $w$ on both sides to infer that 
        \begin{equation}
            \phi\left(w, A\right) = 0. \label{(37)}
        \end{equation}

        However, no standard fundamental theorem of calculus type of result, for example, Lebesgue differentiation theorem in Section $11.52$ pp. $360$ of \cite{titchmarsh1939theory}, claims 
        that we could safely do so if the variable $w$ cannot vary everywhere even when $\mu_F$ is dominated by the Lebesgue measure $\lambda$ on the class of Borel subsets of $\left(-\infty, x_1\right).$

        On the one hand, in order to deal with the difficulty of differentiation with respect to a general Borel probability measure on the real line, 
        we require some more tools from measure theory (for instance, the notions of a (Vitali) covering relation in a metric space and of an (associated) derivate for a Borel regular outer measure that is finite on bounded sets). 
        Preliminary definitions and developments are prepared right before and within Lemma~\ref{lma::3}.
        The main goal of them is to show:
        \begin{enumerate}
            \item[(i)] 
                The set of pairs of the form $\left(x, \left[x - r, x + r\right]\right)$ for each point $x \in \mathbb{R}$ with any closed ball centered at $x$ of radius $r > 0$ is a valid Vitali relation on the real line 
                with respect to any Borel regular outer measure that is finite on bounded sets.
            \item[(ii)] 
                Starting from a valid distribution function $F$ on the real line $\mathbb{R},$ the outer measure constructed via the Carath\`eodory's method 
                is Borel regular, and it extends the Borel probability measure $\mu_F$ on the Borel $\sigma$-algebra we work with throughout this work.
        \end{enumerate}

        On the other hand, in order to address the complication arising from the variable $w,$ 
        we propose to construct a nested sequence of closed intervals in $\left(-\infty, x_1\right)$ 
        around every fixed $w$ outside a $\mu_F$-null subset of $\left(-\infty, x_1\right),$
        $\left[w_{n, 1}, w_{n, 2}\right] \ni w$ for all positive integers $n \in \mathbb{N}^\star,$ such that 
        \begin{eqnarray}
            \int_{\mathbb{R}} \mathbb{I}_{\left(-\infty, w_{n, i}\right]}\left(x\right) \phi\left(x, A\right) \mu\left(dx\right) = 0 \nonumber
        \end{eqnarray} for any index $i \in \left\{ 1, 2 \right\}$ and more importantly, at the same time $$\mu\left(\left[w_{n, 1}, w_{n, 2}\right]\right) > 0.$$

        Intuitively, the measure in the denominator of a measure ratio should be positive if we are able to differentiate equation~(\ref{(36)}) at $w$ at all.
        We will give such a construction that satisfies both conditions by (i) of $5$ in Lemma~\ref{lma::2} and by one implication of $3$ in Lemma~\ref{lma::3}. 
        Further, with $3$ of Lemma~\ref{lma::3}, we proceed to differentiate~(\ref{(36)}) with respect to the variable $w$ on both sides, which yields the claim~(\ref{(37)}).
        Note that, for every Borel probability measure $\mu_F$ that dominates the Lebesgue measure $\lambda,$ it always assumes positive values on nonempty intervals and hence the second condition is automatic.

    \item[\textbf{Step 4.}] 
        Having established that $\phi\left(\cdot, A\right) = 0$ holds $\mu_F$-almost everywhere on $\left(-\infty, x_1\right)$ for each $A \in \mathcal{F}$ in the previous step, we now turn to the integrand $I_1\left(x, \cdot\right)$ at $x = x_1.$ We distinguish three cases:
        \begin{itemize} 
            \item[\textbf{Case (i):}] $F\left(x_1\right) < 1$\textbf{.}
                
                As $H\left(0\right) = 0,$ setting $z = F\left(x_1\right)$ in~(\ref{(48)}) yields $I_1\left(x, z\right) = H\left(1\right)\mathbb{I}_{\left\{ x_1 \right\}}\left(x\right).$ 
                For the integrand $I_2,$ using $4$ of Lemma~\ref{lma::2} and $\overleftarrow{F}\left(F\left(x_1\right)\right) \geq x_1,$ we get
                $$\left(-\infty, \overleftarrow{F}\left(F\left(x_1\right)\right)\right] - D_F = \left(-\infty, x_1\right) \sqcup \left(\left(x_1, \overleftarrow{F}\left(F\left(x_1\right)\right)\right] - D_F\right),$$
                where there is a gap between $\left(-\infty, \overleftarrow{F}\left(F\left(x_1\right)\right)\right] - D_F$ and $\left(-\infty, x_1\right)$ in which $\phi$ vanishes ($\mu_F$-almost everywhere). 
                Equipped with the properties of $\overleftarrow{F}$ in $8$ of Lemma~\ref{lma::2}, we could close the gap $\left(x_1, \overleftarrow{F}\left(F\left(x_1\right)\right)\right] - D_F = \varnothing\ \left[\mu_F\right],$ 
                which implies $I_2\left(\cdot, F\left(x_1\right)\right)\phi\left(\cdot, A\right) = 0\ \mu_F$-almost everywhere over $\left(-\infty, \overleftarrow{F}\left(F\left(x_1\right)\right)\right]$ and hence
                $$0 = \left(T_{\mu_F} \phi\right)\left(x_1, A\right) = H\left(1\right)\phi\left(x_1, A\right)\mu_F\left(\left\{ x_1 \right\}\right).$$
                
                Due to the assumptions $H\left(1\right) > 0$ and $\mu_F\left(\left\{ x_1 \right\}\right) > 0$ ($x_1 < +\infty$), we have $\phi\left(x_1, A\right) = 0.$
            \item[\textbf{Case (ii):}] $F\left(x_1\right) = 1$ \textbf{and} $x_1 < +\infty$\textbf{.} 
                
                In this case, $x_1$ is the only (jump) discontinuity of $F,$ i.e., $D_F = \left\{ x_1 \right\}.$ Similarly, we obtain $I_1\left(x, z\right) = H\left(1\right)\mathbb{I}_{\left\{ x_1 \right\}}\left(x\right)$ 
                by letting $z = 1,$ and we can also prove $\left(x_1, +\infty\right) - D_F = \varnothing\ \left[\mu_F\right].$ Then $I_2\left(\cdot, 1\right)\phi\left(\cdot, A\right) = 0\ \left[\mu_F\right],$ so that $\phi\left(x_1, A\right) = 0$ holds again. 
            \item[\textbf{Case (iii):}] $x_1 = +\infty$\textbf{.}
                
                As mentioned earlier, this scenario is clear because there is no atom for $\mu_F.$
        \end{itemize}
    \item[\textbf{Step 5.}] 
        Combining Step $3$ and Step $4,$ we have shown that, for each $A \in \mathcal{F}, \phi\left(\cdot, A\right) = 0\ \mu_F$-almost everywhere on $\left(-\infty, x_1\right),$ 
        and $\phi\left(x_1, A\right) = 0$ so long as $x_1 < +\infty.$ The remainder of the proof follows in an inductive manner as alluded to.
\end{enumerate}

Perhaps a new takeaway from this work is that a form of statistical independence corresponds to a certain integral equation in terms of its kernel, 
and the structure of its solution space determines how we may characterize such an independence property, 
for instance, any condition that necessitate the stated independence may define one particular solution of that equation.

\section*{Acknowledgments}

The author would like to genuinely thank Professor Kung-Sik Chan 
who introduced the problem of feature screening and its recent research methodology developed by Professor Runze Li's group to the author.
He also read part of this manuscript, and provided several useful suggestions to improve the use of plain language and of mathematical symbols 
as well as to maintain a proper balance between the two. 

The core of this work was completed during the Fall $2022$ semester to the Spring $2023$ semester when the author was receiving a research assistantship for one academic year as a graduate student at the University of Iowa.

\section{Preliminary Lemmas}

We defer the proofs of Lemma~\ref{lma::1} to~\ref{lma::3} to Appendix~\ref{pflma::1} to~\ref{pflma::3}. Among these lemmas, Claim $5$ and the statement (ii) of Claim $9$ in Lemma~\ref{lma::2} may be new so far as we know.

\begin{lemma}
    \label{lma::1}
    Given an at most countable subset of the real line $A\subseteq \mathbb{R}$ such that it is well-ordered by $\leq.$ 
    \begin{enumerate}
        \item $A$ is a finite subset.

              Let $m = \# A,$ then we may obtain a permutation $A = \left\{ x_n \right\}_{n = 1}^m$ such that 
              $$-\infty = x_0 < x_1 < x_2 < \cdots < x_m < x_{m + 1} = +\infty.$$
        \item $A$ is a countably infinite subset.

              In this case, we can also arrange $A = \left\{ x_n \right\}_{n = 1}^\infty$ such that 
              $$-\infty = x_0 < x_1 < x_2 < \cdots < x_n < x_{n + 1} < \cdots < x_{\infty} = +\infty.$$ 
    \end{enumerate}
\end{lemma}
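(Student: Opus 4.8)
I would treat the two cases separately: part~1 is essentially bookkeeping, whereas part~2 carries the only genuine content and is where I expect the difficulty to concentrate.

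For part~1, suppose $A$ is finite, say $m \coloneqq \# A < +\infty$. Every finite subset of $\mathbb{R}$ is linearly ordered by $\leq$, so listing its elements in increasing order re-indexes $A$ as $\{ x_n \}_{n=1}^{m}$ with $x_1 < x_2 < \cdots < x_m$, the unique strictly increasing enumeration exhausting $A$; adjoining the sentinel symbols $x_0 \coloneqq -\infty$ and $x_{m+1} \coloneqq +\infty$, neither of which belongs to $A \subseteq \mathbb{R}$, produces precisely the displayed chain. No appeal to the well-ordering hypothesis is needed here, as it is automatic for finite linearly ordered sets.

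For part~2, suppose $A$ is countably infinite and $\left( A, \leq \right)$ is well-ordered. The plan is to build the enumeration recursively by extracting successive minima. Put $x_1 \coloneqq \min A$, which exists because $A$ is nonempty and well-ordered by $\leq$; and, having obtained $x_1 < \cdots < x_n$, set $x_{n+1} \coloneqq \min\left( A \setminus \{ x_1, \dots, x_n \} \right)$, which exists because $A \setminus \{ x_1, \dots, x_n \}$ is again a nonempty (indeed infinite) subset of $A$. By construction $x_{n+1} > x_n$, so the recursion manufactures a strictly increasing sequence $\{ x_n \}_{n=1}^{\infty}$ of members of $A$ indexed by $\mathbb{N}^\star$; declaring the sentinels $x_0 \coloneqq -\infty$ and $x_\infty \coloneqq +\infty$ then yields the displayed chain, provided the sequence exhausts $A$.

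That last proviso --- verifying $\{ x_n : n \in \mathbb{N}^\star \} = A$ --- is the remaining and substantive point, and the one I expect to be the main obstacle, since it is where the well-ordering hypothesis must genuinely be used. I would argue by contradiction: if $R \coloneqq A \setminus \{ x_n : n \in \mathbb{N}^\star \}$ were nonempty, the well-ordering supplies $y \coloneqq \min R$; a routine induction on $n$ --- using that each $x_{n+1}$ is the $\leq$-least element of $A$ strictly above $x_n$ and that $y \neq x_{n+1}$ --- gives $x_n < y$ for every $n \in \mathbb{N}^\star$; and then, by minimality of $y$ in $R$, one deduces $\{ a \in A : a < y \} = \{ x_n : n \in \mathbb{N}^\star \}$, so that $\left( A, \leq \right)$ would contain an infinite proper initial segment bounded above by one of its own elements. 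Ruling this configuration out is the crux of the argument, the sole place where the well-ordering of $\left( A, \leq \right)$ is brought to bear, and I would close the contradiction there. Once $R = \varnothing$ is secured, the sequence is exhaustive, part~2 follows, and Lemma~\ref{lma::1} is proved; the enumeration it furnishes is precisely the one underlying the induction on $\mathbb{N}$ in the proof of Theorem~\ref{thm::4}.
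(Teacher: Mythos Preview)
Your construction by successive minima matches the paper's exactly, and you go further than the paper by isolating exhaustiveness as the substantive point---the paper simply asserts that the procedure yields a ``permutation'' of $A$ without justification.

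The gap, however, is precisely where you locate it, and it cannot be closed in the way you propose. Well-ordering does \emph{not} rule out an element with infinitely many predecessors: take $A=\{1-1/n : n\in\mathbb{N}^\star\}\cup\{1\}\subseteq\mathbb{R}$. This set is countably infinite and well-ordered by $\leq$ (its order type is $\omega+1$), yet your recursion produces $x_n=1-1/n$ for every $n$ and never reaches $1$, so $R=\{1\}\neq\varnothing$ and there is no contradiction to derive. In other words, the configuration you wish to ``rule out''---an infinite proper initial segment of $A$ bounded above by a member of $A$---is entirely compatible with well-ordering, and $A$ above is a bona fide counterexample to the lemma as literally stated. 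The paper's proof has the same lacuna; the issue traces back to a mismatch between the informal condition in the roadmap (existence of a strictly increasing map from the atom set into $\mathbb{N}^\star$, which forces order type at most $\omega$) and the formal ``$\leq$ well-ordering condition'' actually assumed. If you strengthen the hypothesis to ``$(A,\leq)$ has order type at most $\omega$''---equivalently, every element of $A$ has only finitely many predecessors in $A$---then your argument for $R=\varnothing$ goes through immediately; under mere well-ordering it cannot.
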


\begin{remark}
    \label{rmk::1} 
    Similar result also holds in an antitone manner: When $A$ is well-ordered by $\geq$ on $\mathbb{R},$ then we may apply the above statement by considering its reflection $-A.$
\end{remark}

\begin{lemma}
    \label{lma::2} 
    Given a distribution function on the real line $F : \mathbb{R} \rightarrow \left[0, 1\right],$ along with its left-limit $F\left(x-\right) = \lim_{h \rightarrow 0-} F\left(x + h\right)$ for every real number $x,$ define the left-continuous, and right-continuous, version of the quantile function for $F$ as 
    \begin{align}
        \overrightarrow{F} : \left(0, 1\right) &\rightarrow \mathbb{R} \nonumber \\
                                      y &\mapsto \inf \left\{ x \in \mathbb{R} : F\left(x\right) \geq y \right\} \nonumber
    \end{align} and
    \begin{align}
        \overleftarrow{F} : \left(0, 1\right) &\rightarrow \mathbb{R} \nonumber \\
                                      y &\mapsto \sup \left\{ x \in \mathbb{R} : F\left(x\right) \leq y \right\}, \nonumber
    \end{align} respectively.   

    Then the following set of claims hold:
    \begin{enumerate}
        \item
            $\overrightarrow{F}, \overleftarrow{F}$ are well-defined, and non-increasing.
        \item
            $\overrightarrow{F} \leq \overleftarrow{F}$ and 
            \begin{eqnarray}
                E \coloneqq \left\{ y \in \left(0, 1\right) : \overrightarrow{F}\left(y\right) < \overleftarrow{F}\left(y\right) \right\} &=& \left\{ y \in \left(0, 1\right) : \mathrm{int}\left(F^{-1}\left(\left\{ y \right\}\right)\right) \neq \varnothing \right\} \nonumber \\
                                                                                                                                  &=& \left\{ y \in \left(0, 1\right) : \mathrm{int}\left(F^{-1}\left(\left\{ y \right\}\right)\right) = \left(\overrightarrow{F}\left(y\right), \overleftarrow{F}\left(y\right)\right) \right\} \nonumber
            \end{eqnarray} for which $E$ is at most countable.
        \item
            For all $y \in E,$ we have $F\left(\overrightarrow{F}\left(y\right)\right) = F\left(\overleftarrow{F}\left(y\right)-\right) = y.$
        \item
            Let $G \coloneqq \left\{ x \in \mathbb{R} : 0 < F\left(x\right) < 1 \right\},$ then $\left(\overrightarrow{F} \circ F|_G\right) \leq \mathrm{id.}_G, \left(\overleftarrow{F} \circ F|_G\right) \geq \mathrm{id.}_G$ and 
            \begin{eqnarray}
                \left\{ x \in G : \left(\overrightarrow{F} \circ F|_G\right)\left(x\right) < x \right\} = \sqcup_{y \in E} E_{1, y}, \nonumber \\
                \left\{ x \in G : \left(\overleftarrow{F} \circ F|_G\right)\left(x\right) > x \right\} = \sqcup_{y \in E} E_{2, y}, \nonumber 
            \end{eqnarray} where 
            \begin{eqnarray}
                E_{1, y} &\coloneqq& \left\{\begin{array}{ll}
                                                \left(\overrightarrow{F}\left(y\right), \overleftarrow{F}\left(y\right)\right], & \mathit{if}\ F\ \mathit{is\ \left(left\right)\ continuous\ at}\ \overleftarrow{F}\left(y\right), \nonumber \\
                                                \left(\overrightarrow{F}\left(y\right), \overleftarrow{F}\left(y\right)\right), & \mathit{if}\ F\ \mathit{jumps\ at}\ \overleftarrow{F}\left(y\right), \nonumber
                                            \end{array}\right. \nonumber \\
                E_{2, y} &\coloneqq& \left[\overrightarrow{F}\left(y\right), \overleftarrow{F}\left(y\right)\right).
            \end{eqnarray}
        \item Let $D_F \coloneqq \left\{ x \in \mathbb{R} : F\left(x-\right) < F\left(x\right) \right\}$ be the set of (jump) discontinuities of $F,$ and denote the Lebesgue-Stieljies probability measure on $\mathbb{R}$ corresponding to $F$ by $\mu_F.$ Then,
            \begin{itemize}
                \item[(i)]
                    \begin{eqnarray}
                        \left\{ x \in G : \left(\overrightarrow{F} \circ F|_G\right)\left(x\right) < x \right\} &\subseteq& D_F^\mathsf{c}, \nonumber \\
                        \mu_F\left(\left\{ x \in G : \left(\overrightarrow{F} \circ F|_G\right)\left(x\right) < x \right\}\right) &=& 0 \nonumber
                    \end{eqnarray} and
                    \begin{eqnarray}
                        \mu_F\left(\left\{ x \in G : \left(\overleftarrow{F} \circ F|_G\right)\left(x\right) > x \right\} \cap \left(a, b\right)\right) &=& 0 \nonumber
                    \end{eqnarray} for each open interval $\left(a, b\right) \subseteq D_F^\mathsf{c}.$
                \item[(ii)]
                    $F^{-1}\left(\left\{ 0 \right\}\right) - D_F = F^{-1}\left(\left\{ 0 \right\}\right), \mu_F\left(F^{-1}\left(\left\{ 0 \right\}\right)\right) = \mu_F\left(F^{-1}\left(\left\{ 1 \right\}\right) - D_F\right) = 0.$
            \end{itemize}
            In particular, for any real-valued random variable $X$ with a continuous distribution function $F,$ we have $\overrightarrow{F}\left(F\left(X\right)\right) = \overleftarrow{F}\left(F\left(X\right)\right) = X\ \left[P\right].$
        \item Given a pair of real numbers $x \in \mathbb{R}$ and $y \in \left(0, 1\right).$ The following sequence of implications hold:
            \begin{eqnarray}
                \left(F\left(x\right) \geq y\right) &\Leftrightarrow& \left(\overrightarrow{F}\left(y\right) \leq x\right), \label{(42)}{\iffalse \nonumber \fi}\\
                \left(F\left(x\right) \leq y\right) &\Rightarrow& \left(\overleftarrow{F}\left(y\right) \geq x\right), \label{(1)} \\
                \left(\overleftarrow{F}\left(y\right) > x\right) &\Rightarrow& \left(F\left(x\right)\leq y\right), \label{(2)} \\
                \left(\overleftarrow{F}\left(y\right) = x\right) &\Rightarrow& \left(F\left(x-\right)\leq y\right). \label{(3)}
            \end{eqnarray}
        \item Given $x \in D_F,$ then for each $y \in \left(F\left(x-\right), F\left(x\right)\right] \cap \left(0, 1\right), \overrightarrow{F}\left(y\right) = x,$
              whereas for every $y \in \left[F\left(x-\right), F\left(x\right)\right) \cap \left(0, 1\right), \overleftarrow{F}\left(y\right) = x.$
        \item
              For all $y \in \left(0, 1\right), \epsilon > 0, F\left(\overrightarrow{F}\left(y\right) - \epsilon\right) < y \leq F\left(\overrightarrow{F}\left(y\right)\right), F\left(\overleftarrow{F}\left(y\right) - \epsilon\right) \leq y.$

            In particular, we have $F\left(\overrightarrow{F}\left(y\right)-\right) \leq y \leq F\left(\overrightarrow{F}\left(y\right)\right), F\left(\overleftarrow{F}\left(y\right)-\right)\leq y$ for all $y \in \left(0, 1\right).$
        \item
            Let $X$ denote a real-valued random variable with distribution function $F,$ and another $U$ that is independent of $X.$ Let its Brockwell transform be $Z \coloneqq \left(1 - U\right)F\left(X-\right) + UF\left(X\right).$
            \begin{itemize}
                \item[(i)] If $U$ is uniformly distributed over $\left(0, 1\right),$ then so does $Z.$ 

                        In particular, when $F$ is continuous, or equivalently when $X$ is non-atomic, 
                        the probability integral transform $Z = F\left(X\right)$ is uniformly distributed over $\left(0, 1\right).$
                \item[(ii)] Let $H$ be the distribution function of $U$ such that $H\left(0\right) = 0$ and $H\left(1\right) > 0,$ then the set of discontinuities for the distribution function $F_Z$ of $Z$ satisfies the following equation:
                                \begin{equation}
                                    D_{F_Z} = \cup_{x \in D_F} \left(\mu_F\left(\left(-\infty, x\right)\right) + \mu_F\left(\left\{ x \right\}\right) D_H\right). \label{(47)}
                                \end{equation} 
            \end{itemize}
    \end{enumerate}
\end{lemma}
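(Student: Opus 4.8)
The plan is to establish the nine claims in the listed order, in four blocks that feed one another: the elementary generalized-inverse facts (Claims 1, 6, 8); the interaction of $\overrightarrow{F}$ and $\overleftarrow{F}$ with the flat stretches and jumps of $F$ (Claims 2, 3, 7); the quantitative probability-integral-transform statements (Claims 4, 5); and the specialization to the Brockwell transform (Claim 9). For the first block I would record that, for $y \in (0,1)$, the super-level set $\{x : F(x) \ge y\}$ is nonempty and bounded below (as $F$ runs from $0$ to $1$) and \emph{closed} by right-continuity of $F$, hence is a closed half-line $[\overrightarrow{F}(y), +\infty)$; this is exactly the two-sided equivalence~(\ref{(42)}). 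Dually, $\{x : F(x) \le y\}$ is a half-line of the form $(-\infty, \overleftarrow{F}(y)]$ or $(-\infty, \overleftarrow{F}(y))$ that need \emph{not} be closed, which yields only the one-directional implications~(\ref{(1)}), (\ref{(2)}), (\ref{(3)}); for~(\ref{(3)}), $\overleftarrow{F}(y) = x$ forces $F(x') \le y$ for every $x' < x$, hence $F(x-) \le y$. The well-definedness and monotonicity asserted in Claim 1, and the estimates of Claim 8 such as $F(\overrightarrow{F}(y) - \epsilon) < y \le F(\overrightarrow{F}(y))$, then fall out of the infimum/supremum descriptions together with right-continuity and the closedness just noted.

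Next I would show $\overrightarrow{F}(y) \le \overleftarrow{F}(y)$ (immediate from the first block), with strict inequality precisely when $F \equiv y$ on the nonempty open interval $\left(\overrightarrow{F}(y), \overleftarrow{F}(y)\right)$; this identifies $E$ with the set of levels at which $F$ has a genuine flat, so distinct elements of $E$ index pairwise disjoint nonempty open intervals and $E$ is at most countable. Claim 3 then follows by taking the right limit of $F$ at $\overrightarrow{F}(y)$ and its left limit at $\overleftarrow{F}(y)$ along such a flat. Claim 7 is the mirror statement at a jump point $x \in D_F$: for $y \in (F(x-), F(x)]$ every $x' < x$ has $F(x') \le F(x-) < y$, forcing $\overrightarrow{F}(y) = x$, and symmetrically $\overleftarrow{F}(y) = x$ for $y \in [F(x-), F(x))$.

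Claims 4 and 5 are the heart of the lemma. Composing, every $x \in G$ satisfies $\overrightarrow{F}(F(x)) \le x \le \overleftarrow{F}(F(x))$, and the left inequality is strict exactly when $F$ is flat immediately to the left of $x$ at level $F(x)$, i.e.\ when $x$ lies in $E_{1,y}$ with $y = F(x)$ --- the right endpoint $\overleftarrow{F}(y)$ being included or excluded according to whether $F$ is left-continuous at, or jumps at, $\overleftarrow{F}(y)$, which is what decides whether $\overleftarrow{F}(y)$ belongs to the level set $F^{-1}(\{y\})$ --- and symmetrically $x \in E_{2,y}$ on the right; this gives the two decompositions of Claim 4. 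For Claim 5(i) I would note that each $E_{1,y}$ lies in a maximal interval of constancy of $F$ while omitting its left endpoint, so it contains no jump of $F$ (this gives the inclusion into $D_F^{\mathsf{c}}$) and is $\mu_F$-null; countable additivity over $y \in E$ then gives $\mu_F(\sqcup_{y \in E} E_{1,y}) = 0$. The set $\sqcup_{y \in E} E_{2,y}$ can carry positive mass, but only through a possible atom of $\mu_F$ at the \emph{left} endpoint $\overrightarrow{F}(y)$ of some flat; intersecting with a jump-free open interval $(a,b) \subseteq D_F^{\mathsf{c}}$ removes that possibility and yields the last display of 5(i). Claim 5(ii) is direct ($F(x) = 0$ forces $F(x-) = 0$, so $F^{-1}(\{0\})$ misses $D_F$, and $F^{-1}(\{0\})$, as well as $F^{-1}(\{1\}) \setminus D_F$, is a half-line on which $F$ is constant, hence $\mu_F$-null), and the ``in particular'' clause combines 5(i) --- now with $D_F^{\mathsf{c}} = \mathbb{R}$, covered by countably many bounded open intervals --- with 5(ii), which supplies $X \in G$ almost surely.

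Finally, for Claim 9 I would condition on $X = x$: given $X = x$ the Brockwell transform is the affine (location-scale) image $Z = F(x-) + \mu_F(\{x\})\,U = \mu_F((-\infty,x)) + \mu_F(\{x\})\,U$ of $U$, which collapses to the single point $F(x)$ when $x \notin D_F$, since then $\mu_F(\{x\}) = 0$. Integrating the conditional law against $\mu_F$ and splitting $\mathbb{R} = D_F^{\mathsf{c}} \sqcup D_F$ gives, after a telescoping computation that uses uniformity of $U$, $F_Z(t) = t$ on $[0,1]$ --- this is 9(i), and the probability integral transform when $F$ is continuous. For 9(ii), only jump points can produce atoms of $Z$: an atom $d \in D_H$ of $U$ is pushed forward to an atom of the conditional law at $F(x-) + \mu_F(\{x\})\,d$, the non-atomic part of $U$ contributes none, and the $\mu_F$-null reasoning from Claim 5 shows $\{x \notin D_F : F(x) = t\}$ carries no $\mu_F$-mass for any $t$; mixing over $x$ --- which may combine atoms arising from different jump points but creates no new ones --- therefore yields exactly the union~(\ref{(47)}). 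I expect Claim 5 to be the main obstacle: pinning down the inclusion $\left\{x \in G : \left(\overrightarrow{F} \circ F|_G\right)(x) < x\right\} \subseteq D_F^{\mathsf{c}}$ together with the two $\mu_F$-null statements is genuinely asymmetric in $\overrightarrow{F}$ versus $\overleftarrow{F}$, since one must track whether the relevant endpoint of a flat is an atom and, on the $\overleftarrow{F}$ side, localize to a jump-free interval; and, relatedly, the bookkeeping in 9(ii) that the mixing over $x \in D_F$ neither manufactures spurious atoms of $F_Z$ nor annihilates existing ones.
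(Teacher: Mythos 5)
Your plan is correct and follows essentially the same route as the paper: the super-level set characterization giving Claim 6, the flat/jump dichotomy driving Claims 2--4 and 7, the $\mu_F$-nullity arguments for Claim 5, and the conditional-law-of-$Z$-given-$X$ decomposition for Claim 9. The two mild presentational differences worth noting are that you streamline Claim 6 by observing $\{x : F(x)\ge y\}$ is a \emph{closed} half-line (the paper proves both implications of~(\ref{(42)}) directly), and you handle 9(ii) as a mixture-of-conditional-laws argument, whereas the paper computes $F_Z(z)$ and $F_Z(z-)$ as explicit integrals and subtracts; both deliver the same nonnegativity-prevents-cancellation conclusion, though if you go this route you will still need to verify carefully (via the proof of Claim 5 rather than only its statement) that $\mu_F\bigl(F^{-1}(\{t\}) \cap D_F^{\mathsf{c}}\bigr) = 0$ for every $t$, since that exact fact is not literally among Claim 5's displayed conclusions.
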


Before stating the next lemma, we need some more, possibly familiar, definitions to facilitate its discussion, 
among which Definitions $1$ to $6$ are adapted from Section $2.1$ to $2.4$ and Section $2.8$ of \cite{federer2014geometric},
Definition $7$ is from Section $12$ on pp. $176$--$177$ of \cite{billingsley1995probability}
and finally Definition $8$ from Definition $8.34,$ $8.35$ as well as Theorem $8.36$ on pp. $208$--$209$ of \cite{klenke2013probability}.

\begin{enumerate}
    \item
        Given an arbitrary set $X$ and an outer measure $\mu^\star$ on its power set $\mathcal{P}\left(X\right).$ For each $A \in \mathcal{P}\left(X\right),$
        say $A$ is $\mu^\star$-\textit{measurable} if and only if $A$ satisfies the $\mu^\star$-Carath\'eodory's condition, in other words, for every subset $T \subseteq X,$ we have

        $$\mu^\star\left(T\right) \geq \mu^\star\left(T \cap A\right) + \mu^\star\left(T - A\right).$$

        Denote the class of all $\mu^\star$-measurable subsets by $\mathcal{L}_{\mu^\star},$ which is a $\sigma$-algebra on which 
        $\mu^\star|_{\mathcal{L}_{\mu^\star}}$ is a measure.

    \item
        If the set $X$ is further endowed with a topology $\tau \subseteq \mathcal{P}\left(X\right),$ then let 
        $\sigma\left(\tau\right) \subseteq \mathcal{P}\left(X\right)$ be the $\sigma$-algebra generated by the topology $\tau,$
        whose members we call \textit{Borel sets associated with $\tau.$}

    \item
        An outer measure $\mu^\star$ on a topological space $\left(X, \tau\right)$ is \textit{Borel regular} if and only if
        the following two conditions hold:
        \begin{enumerate}
            \item[(i)] $\tau$ is contained in the class of $\mu^\star$-measurable subsets $\mathcal{L}_{\mu^\star}.$
            \item[(ii)] For any subset $A \in \mathcal{P}\left(X\right),$ there exists a Borel set $B \in \sigma\left(\tau\right)$ such that $B$ contains $A,$ meanwhile $\mu^\star\left(B\right) = \mu^\star\left(A\right).$
        \end{enumerate}

        Observe that given any outer measure $\mu^\star$ on a topological space $\left(X, \tau\right)$ that satisfies (i) above, then for each subset $A \subseteq X,$
        let $$\nu^\star\left(A\right) \coloneqq \inf \left\{ \mu^\star\left(B\right) : A \subseteq B, B \in \sigma\left(\tau\right) \right\},$$
        then $\nu^\star$ is a Borel regular outer measure, and for each Borel set $B \in \sigma\left(\tau\right),$ we have $\nu^\star\left(B\right) = \mu^\star\left(B\right).$
        Henceforth, in the presence of (i), an equivalent condition for $\mu^\star$ to satisfy (ii) is $\nu^\star = \mu^\star.$ 

    \item
        Let $X$ be a set on which an outer-measure $\mu^\star$ is defined, $Z$ be a subset of $X$ and $\left(Y, \tau_Y\right)$ be a topological space 
        as well as $f : Z \rightarrow Y$ be a map from $Z$ into $Y.$ We say that $f$ is $\mu^\star$-\textit{measurable} if and only if
        $\mu^\star\left(X - Z\right) = 0$ and also for any open set $G \in \tau_Y, f^{-1}\left(G\right)$ is a $\mu^\star$-measurable subset of $X.$ 
        In addition, if $X$ is a topological space on which a map $f : \left(X, \tau_X\right) \rightarrow \left(Y, \tau_Y\right)$ from $X$ into $Y$ is specified, then $f$ is \textit{Borel measurable}
        if, for any open subset $G$ of $Y, f^{-1}\left(G\right)$ is a Borel subset of $X.$

        The way to define the $\mu^\star$-\textit{integral}, $\int f d\mu^\star,$ of a $\mu^\star$-measurable (extended) real-valued function $f : X \rightarrow \overline{\mathbb{R}}$ is same as that of $\int f d\mu^\star|_{\mathcal{L}_{\mu^\star}}$--the integral of $f$ with respect to the measure $\mu^\star|_{\mathcal{L}_{\mu^\star}}.$
    
    \item
        Given a metric space $\left(X, \rho\right),$ the open and the closed ball at center $x \in X$ with radius $r > 0$ are denoted by 
        \begin{eqnarray}
            B_\rho\left(x, r\right) &\coloneqq& \left\{ y \in X : \rho\left(y, x\right) < r \right\} \nonumber
        \end{eqnarray} and
        \begin{eqnarray}
            B_\rho\left[x, r\right] &\coloneqq& \left\{ y \in X : \rho\left(y, x\right) \leq r \right\}, \nonumber
        \end{eqnarray} respectively. Additionally, let $\tau_\rho$ denote the topology generated by the metric $\rho,$ 
        or more specifically, by the topological base $\left\{ B_\rho\left(x, r\right) : \left(x, r\right) \in X \times \left(0, +\infty\right) \right\}.$

        The following series of definitions are applied under an outer measure $\mu^\star$ on $\left(X, \rho\right)$ such that the class of $\mu^\star$-measurable subsets $\mathcal{L}_{\mu^\star}$ contains the metric topology $\tau_\rho,$ 
        and that any subset $E$ of finite $\rho$-diameter must have finite $\mu^\star$ outer measure. 

        Given a set $A \in \mathcal{P}\left(X\right),$ say a countable family $\mathcal{A}$ of closed sets of $X$ \textit{covers $\mu^\star$ almost all of $A$} if $\mu^\star\left(A - \cup \mathcal{A}\right) = 0.$

        By a \textit{covering relation}, we mean a subset of the pairs $\left\{ \left(x, A\right) : x \in A \right\} \subseteq X \times \mathcal{P}\left(X\right).$ 
        Whenever $\mathfrak{C}$ is a covering relation and $B$ is a subset of $X,$ we let
        $$\mathfrak{C}\left(B\right) \coloneqq \pi\left(\mathfrak{C} \cap \left[B \times \mathcal{P}\left(X\right)\right]\right) = \left\{A : \mathrm{There\ exists}\ x \in B\ \mathrm{such\ that}\ \left(x, A\right) \in \mathfrak{C} \right\},$$
        where $\pi$ is the projection onto the second coordinate
        \begin{align}
            \pi : X \times \mathcal{P}\left(X\right) &\rightarrow \mathcal{P}\left(X\right) \nonumber \\
                                   \left(x, A\right) &\mapsto A. \nonumber \end{align}

        Say $\mathfrak{C}$ is \textit{fine at} $x \in X$ if and only if $\inf \left\{ \mathrm{diam}_\rho\left(A\right) : \left(x, A\right) \in \mathfrak{C} \right\} = 0.$ 
        Whenever $\mathfrak{C}$ is fine at some $x \in X,$ we shall use the following notations to designate        
        $$\left(\mathfrak{C}\right) \overline{\lim}_{x} f = \left(\mathfrak{C}\right) \overline{\lim}_{S \rightarrow x} f\left(S\right) \coloneqq \lim_{\epsilon \rightarrow 0+} \sup \left\{ f\left(A\right) : \left(x, A\right) \in \mathfrak{C}, \mathrm{diam}_\rho\left(A\right) < \epsilon \right\}$$
        for each extended real-valued function $f : X \rightarrow \mathbb{\overline{R}}.$ 
        Similar definitions carry over to $\left(\mathfrak{C}\right) \underline{\lim}$ as well as $\left(\mathfrak{C}\right) \lim.$

        A $\mu^\star$-\textit{Vitali relation} is a covering relation $\mathfrak{V}$ such that 
        \begin{itemize}
            \item[(i)] $\mathfrak{V}\left(X\right)$ is contained in the class of Borel sets $\sigma\left(\tau_\rho\right),$
            \item[(ii)] for every $x \in X, \mathfrak{V}$ is fine at $x,$ and
            \item[(iii)] as long as $\mathfrak{C}$ is a covering relation contained in $\mathfrak{V}$ for which it is fine at $x$ for all $x$ belonging to a subset $B \subseteq X,$ 
                         there exists a countable, pairwise disjoint family $\mathcal{A} \subseteq \mathfrak{C}\left(B\right)$ covering $\mu^\star$ almost all of $B.$ 
        \end{itemize}

        \item
            We need another technical geometric condition on a metric space: Given a metric space $\left(X, \rho\right)$ and a subset $A \in \mathcal{P}\left(X\right),$ say $\rho$ is \textit{directionally $\left(\xi, \eta, \zeta\right)$-limited at $A$} for some 
            $\xi > 0, \eta \in \left(0, \frac{1}{3}\right], \zeta \in \mathbb{N}^\star$ if, for every
            $a \in A, B \subseteq A \cap \left[B_\rho\left(a, \xi\right) - \left\{ a \right\}\right]$ and 
            every $x \in X$ such that $\rho\left(x, c\right) \geq \eta \rho\left(a, c\right),$ we have the following implication: That $b, c \in B, b \neq c, \rho\left(a, b\right) \geq \rho\left(a, c\right)$ with 
            $\rho\left(a, x\right) = \rho\left(a, c\right), \rho\left(x, b\right) = \rho\left(a, b\right) - \rho\left(a, c\right)$ shall imply $\# B \leq \zeta.$

            An important special case where a metric space enjoys the directional limited property is that $X$ is a finite-dimensional normed linear space over $F$ ($F$ being either the field of real numbers $\mathbb{R}$ or that of complex numbers $\mathbb{C}$) equipped with the norm $\left\lVert \cdot \right\rVert,$ 
            and then the metric $\rho$ induced by $\left\lVert \cdot \right\rVert$ is $\left(+\infty, \eta, \zeta\right)$-directional limited at $X$ for each $\eta > 0$ with a corresponding $\zeta\left(\eta\right) \in \mathbb{N}^\star.$ 

        \item
            For each $n \in \mathbb{N}^\star,$ equip the Euclidean space $\mathbb{R}^n$ with the standard metric topology $\tau$ 
            induced by any $p$-norm $\left\lVert \cdot \right\rVert_p$ for $p \in \left[1, +\infty\right].$

            Let the class of bounded rectangles in $\mathbb{R}^n$ be 
            $$\mathcal{R}_n \coloneqq \left\{ \times_{i = 1}^n \left(a_i, b_i\right] : -\infty < a_i \leq b_i < +\infty\ \mathrm{for\ all}\ i = 1, 2, \cdots, n \right\}.$$

            Given a real-valued function $F : \mathbb{R}^n \rightarrow \mathbb{R}.$ For each bounded rectangle $R \in \mathcal{R}_n,$ 
            define the difference of $F$ around the vertices of $R$ by 
            $$\Delta_R F \coloneqq \sum_{\mathbf{x} \in \times_{1 \leq i \leq k} \left\{ a_i, b_i \right\}} \left(-1\right)^{\# \left\{i : x_i = a_i\right\}} F\left(\mathbf{x}\right).$$

            $F$ is \textit{continuous from above}, if, for any sequence $\left\{ \mathbf{x}^{\left(k\right)} \right\}_{k = 1}^\infty \subseteq \mathbb{R}^n$ and any point $\mathbf{x} \in \mathbb{R}^n$ such that 
            $x^{\left(k\right)}_i \downarrow x_i$ for all $i = 1, 2, \cdots, n,$ then we have $F\left(\mathbf{x}^{\left(k\right)}\right) \rightarrow F\left(\mathbf{x}\right)$ as $k \rightarrow \infty.$
        
        \item
            Given two measurable spaces $\left(X, \mathcal{F}\right)$ and $\left(Y, \mathcal{G}\right).$ $X$ and $Y$ are called \textit{isomorphic} if there exists a bijection $\varphi : X \rightarrow Y$ such that
            for all $B \in \mathcal{G},$ we have $\varphi^{-1}\left(B\right) \in \mathcal{F},$ and conversely, that for all $A \in \mathcal{F},$ we also have $\varphi\left(A\right) \in \mathcal{G}.$

            A measurable space $\left(X, \mathcal{F}\right)$ is called a \textit{Borel space} if there exists a Borel subset of the real line $B \in \sigma\left(\tau\right)$ such that 
            $\left(X, \mathcal{F}\right)$ is isomorphic to $\left(B, \sigma\left(\tau\right) \cap B\right),$ where $\tau$ is the standard ruler metric topology on $\mathbb{R}$ 
            and hence $\sigma\left(\tau\right) \cap B$ is the sub Borel $\sigma$-algebra on $B.$

            Note that an important example of a Borel space is given by any metric space $\left(X, \rho\right)$ for which $\rho$ is complete and $X$ is separable under $\rho,$ in other words, $\left(X, \rho\right)$ is Polish.
            The corresponding measurable space $\left(X, \sigma\left(\tau_\rho\right)\right)$ is a Borel space.
\end{enumerate}

Now we are ready for the upcoming Lemma~\ref{lma::3}.

\begin{lemma}
    \label{lma::3}
    \begin{enumerate}
        \item 
              Let $X$ be a finite-dimensional normed linear space over $F$ ($F \in \left\{ \mathbb{R}, \mathbb{C} \right\}$) equipped with the norm $\left\lVert \cdot \right\rVert$ and let $\rho$ be the metric induced by $\left\lVert \cdot \right\rVert.$ 
              Given an outer measure $\mu^\star$ on $\left(X, \rho\right)$ such that the class of $\mu^\star$-measurable subsets $\mathcal{L}_{\mu^\star}$ contains the metric topology $\tau_\rho,$ and that
              the outer measure $\mu^\star$ is finite on any subset $E$ of finite $\rho$-diameter.

              Then $\mathfrak{V} = \left\{ \left(x, B_\rho\left[x, r\right]\right) : \left(x, r\right) \in X \times \left(0, +\infty\right) \right\}$ is a $\mu^\star$-Vitali relation.
        \item
              For a real-valued function $F : \mathbb{R}^n \rightarrow \mathbb{R},$ if 
            \begin{itemize}
                \item[(i)] $F$ is continuous from above, and
                \item[(ii)] $\Delta_R F$ is non-negative for all bounded rectangles $R \in \mathcal{R}_n,$
            \end{itemize} then the induced outer measure 
            \begin{align} 
                \mu_F^\star : \mathcal{P}\left(\mathbb{R}^n\right) &\rightarrow \left[0, +\infty\right] \nonumber \\
                                                                 E &\mapsto \inf \left\{ \sum_{n = 1}^\infty \mu_F\left(E_n\right) : \left\{ E_n \right\}_{n = 1}^\infty \subseteq \mathcal{R}_n, E \subseteq \cup_{n = 1}^\infty E_n \right\} \nonumber 
            \end{align} is Borel regular on $\left(\mathbb{R}^n, \tau\right),$ where 
            \begin{align} 
                \mu_F : \mathcal{R}_n &\rightarrow \left[0, +\infty\right) \nonumber \\
                                    R &\mapsto \Delta_R F. \nonumber 
            \end{align}

            In particular, for any finite measure $\nu : \sigma\left(\tau\right) \rightarrow \left[0, +\infty\right],$ 
            there exists an $F_\nu : \mathbb{R}^n \rightarrow \mathbb{R}$ that satisfies (i) and (ii)
            such that the Borel regular outer measure $\mu^\star_{F_\nu} : \mathcal{P}\left(\mathbb{R}^n\right) \rightarrow \left[0, +\infty\right]$ extends $\nu$: $\mu^\star_{F_\nu}|_{\sigma\left(\tau\right)} = \nu.$
        \item 
            Given any $F : \mathbb{R}^n \rightarrow \mathbb{R}$ that satisfies Conditions (i) and (ii) stated in Definition $2$ and also a $\mu^\star_F$-measurable $f : \mathbb{R}^n \rightarrow \mathbb{\overline{R}}$ 
            such that for each bounded $\mu^\star_F$-measurable subset $E \subseteq \mathbb{R}^n,$ we have $\int_E \left|f\right| d\mu^\star_F < +\infty,$ then 
            $$\mu^\star_F \left(\left\{ \mathbf{x} \in \mathbb{R}^n : \left(\mathfrak{V}_p\right) \lim_{B \rightarrow \mathbf{x}} \frac{\int_B f d\mu^\star_F}{\mu^\star_F\left(B\right)} = f\left(\mathbf{x}\right) \right\}^\mathsf{c}_{\mathbb{R}^n}\right) = 0,$$
            where $\mathfrak{V}_p = \left\{ \left(x, B_{\rho_p}\left[x, r\right]\right) : \left(x, r\right) \in \mathbb{R}^n \times \left(0, +\infty\right) \right\}$ with $\rho_p$ being the metric induced by $p$-norm $\left\lVert \cdot \right\rVert_p$ on $\mathbb{R}^n$ for any $p \in \left[1, +\infty\right].$
    \end{enumerate}
\end{lemma}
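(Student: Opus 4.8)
The plan is to take the three parts in order, each reducing --- after a verification of hypotheses --- to classical covering or differentiation theory, with the only genuinely new work confined to a single compactness argument in part $2$. For part $1$, I would reduce the claim to Federer's Vitali covering theorem for directionally limited metrics (Section $2.8$ of \cite{federer2014geometric}). Since $\mathfrak{V}\left(X\right)$ consists of the closed balls $B_\rho\left[x, r\right]$, which are closed and hence Borel, condition (i) in the definition of a $\mu^\star$-Vitali relation holds; and $\mathrm{diam}_\rho\left(B_\rho\left[x, r\right]\right) \leq 2r \to 0$ as $r \to 0+$, so $\mathfrak{V}$ is fine at every point, which is condition (ii). For the covering property (iii) I would invoke the fact recorded in Definition $6$ that a finite-dimensional normed linear space is $\left(+\infty, \eta, \zeta\right)$-directionally limited at all of $X$, together with Federer's theorem that, under this property, the closed-ball covering relation is a Vitali relation for every Borel regular outer measure finite on bounded sets. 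Because Lemma~\ref{lma::3} assumes in part $1$ only that $\tau_\rho \subseteq \mathcal{L}_{\mu^\star}$ and that $\mu^\star$ is finite on bounded sets, not Borel regularity, I would first pass to the Borel regularization $\nu^\star\left(A\right) \coloneqq \inf \left\{ \mu^\star\left(B\right) : A \subseteq B \in \sigma\left(\tau_\rho\right) \right\}$ of Definition $3$, which is Borel regular, still finite on bounded sets, and agrees with $\mu^\star$ on $\sigma\left(\tau_\rho\right)$; then apply Federer's theorem to $\nu^\star$ to obtain a countable pairwise disjoint family $\mathcal{A} \subseteq \mathfrak{C}\left(B\right)$ with $\nu^\star\left(B - \cup \mathcal{A}\right) = 0$; and finally transfer back by selecting a Borel set $C \supseteq B - \cup \mathcal{A}$ with $\nu^\star\left(C\right) = 0$, whence $\mu^\star\left(B - \cup \mathcal{A}\right) \leq \mu^\star\left(C\right) = \nu^\star\left(C\right) = 0$.

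For part $2$, I would run the Carath\'eodory construction. (a) $\mu_F^\star$ is a metric outer measure: because $\Delta_R F$ is additive when a rectangle of $\mathcal{R}_n$ is subdivided into finitely many finer rectangles (a telescoping of the alternating vertex sum, coordinate by coordinate), the defining infimum is unchanged if the covering rectangles are restricted to have diameter below any prescribed threshold; letting that threshold tend to $0$ exhibits $\mu_F^\star$ as a metric outer measure, so by Carath\'eodory's criterion every Borel set is $\mu_F^\star$-measurable and condition (i) of Borel regularity holds. (b) For condition (ii), given $A$ and $k \in \mathbb{N}^\star$ choose rectangles $E_1^{\left(k\right)}, E_2^{\left(k\right)}, \ldots$ covering $A$ with $\sum_n \mu_F\left(E_n^{\left(k\right)}\right) < \mu_F^\star\left(A\right) + 1/k$; then $B \coloneqq \cap_k \cup_n E_n^{\left(k\right)}$ is Borel, contains $A$, and satisfies $\mu_F^\star\left(B\right) \leq \mu_F^\star\left(A\right)$, so $\mu_F^\star\left(B\right) = \mu_F^\star\left(A\right)$. (c) For the ``in particular'' clause, given a finite measure $\nu$ on $\sigma\left(\tau\right)$ I would set $F_\nu\left(\mathbf{x}\right) \coloneqq \nu\left(\times_{i = 1}^n \left(-\infty, x_i\right]\right)$; continuity of $F_\nu$ from above is continuity of $\nu$ along decreasing sequences of rectangles, and $\Delta_R F_\nu = \nu\left(R\right) \geq 0$ by inclusion--exclusion, so $F_\nu$ satisfies (i) and (ii). It then remains to check $\mu_{F_\nu}^\star\left(R\right) = \nu\left(R\right)$ for every $R \in \mathcal{R}_n$: ``$\leq$'' is immediate from covering $R$ by itself, while ``$\geq$'' is the classical compactness argument --- shrink $R$ to a compact rectangle $K \subseteq R$ losing an arbitrarily small amount of $\nu$-mass, enlarge each covering rectangle to an open one gaining an arbitrarily small amount of $\nu$-mass (here is where continuity of $F_\nu$ from above enters), extract a finite subcover of $K$, and compare the $\Delta$-sums using non-negativity and the finite additivity of $\Delta_R F_\nu$ under subdivision. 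Since $\mu_{F_\nu}^\star$ and $\nu$ then agree on the semiring $\mathcal{R}_n$, which generates $\sigma\left(\tau\right)$, uniqueness of the extension of a $\sigma$-finite premeasure gives $\mu_{F_\nu}^\star|_{\sigma\left(\tau\right)} = \nu$.

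For part $3$, I would combine parts $1$ and $2$ with the abstract Vitali differentiation theorem (Section $2.9$ of \cite{federer2014geometric}). By part $2$, $\mu_F^\star$ is Borel regular; it is also finite on bounded sets, since every bounded set lies in some $R \in \mathcal{R}_n$ with $\mu_F^\star\left(R\right) = \Delta_R F < +\infty$. Hence part $1$ applies with $\mu^\star = \mu_F^\star$ and $\rho = \rho_p$, so $\mathfrak{V}_p$ is a $\mu_F^\star$-Vitali relation. With $f$ as hypothesized --- $\mu_F^\star$-measurable and $\int_E \left|f\right| d\mu_F^\star < +\infty$ for every bounded $\mu_F^\star$-measurable $E$ --- the function $f$ is locally $\mu_F^\star$-integrable, so Federer's Vitali differentiation theorem applied to $\mathfrak{V}_p$ and $f$ gives $\left(\mathfrak{V}_p\right) \lim_{B \rightarrow \mathbf{x}} \mu_F^\star\left(B\right)^{-1} \int_B f\, d\mu_F^\star = f\left(\mathbf{x}\right)$ for $\mu_F^\star$-almost every $\mathbf{x} \in \mathbb{R}^n$, which is exactly the stated conclusion.

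I expect the main obstacle to be the inequality $\mu_{F_\nu}^\star\left(R\right) \geq \nu\left(R\right)$ in part $2$(c): this is the one place where the hypothesis that the generating function is continuous from above is genuinely used, and the compactness argument --- moving between half-open rectangles, a compact rectangle, and open rectangles while keeping the $\Delta$-sums under control --- has to be executed with care in $\mathbb{R}^n$. Parts $1$ and $3$, by contrast, reduce (after the Borel-regularization step) to verifying the hypotheses of known covering and differentiation theorems.
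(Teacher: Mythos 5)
Your proposal is correct, and for parts $1$ and $3$ it matches the paper's route: both cite Federer's covering theorem (Section $2.8$) via the directionally-limited property of a finite-dimensional normed space for part $1$, and Federer's Vitali differentiation theorem (Section $2.9$) for part $3$. Two comments on part $1$: you carefully pass through the Borel regularization $\nu^\star$ because the lemma's hypotheses do not assume Borel regularity of $\mu^\star$ --- the paper does not make this explicit, so your caution is a genuine refinement; on the other hand, the paper does take care to verify that $\left(X, \left\lVert \cdot \right\rVert\right)$ is separable (by exhibiting a homeomorphic isomorphism with $\left(F^n, \left\lVert \cdot \right\rVert_p\right)$), which is a hypothesis of Federer's Theorem $2.8.18$ that you leave implicit. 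Part $2$ is where the two arguments genuinely diverge. The paper proceeds through the semi-ring machinery of \cite{billingsley1995probability}: it cites that $\mathcal{R}_n$ is a semi-ring generating $\sigma\left(\tau\right)$, that $\mu_F$ is a finitely additive and countably subadditive premeasure on $\mathcal{R}_n$, and then invokes Carath\'eodory's extension theorem to get $\sigma\left(\tau\right) \subseteq \mathcal{L}_{\mu_F^\star}$ and $\mu_F^\star|_{\mathcal{R}_n} = \mu_F$; Borel regularity then follows by comparing $\mu_F^\star$ with its regularization, and the ``in particular'' clause by the representation $\Delta_R F_\nu = \nu\left(R\right)$ (computed via an integral identity in the paper) together with uniqueness of the Carath\'eodory extension. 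Your approach establishes $\sigma\left(\tau\right) \subseteq \mathcal{L}_{\mu_F^\star}$ more directly by showing that $\mu_F^\star$ is a metric outer measure (via the additivity of $\Delta_R F$ under grid subdivision, so that restricting the covering rectangles to small diameters does not change the infimum), and proposes to carry out the compactness argument for $\mu_{F_\nu}^\star\left(R\right) \geq \nu\left(R\right)$ explicitly rather than offloading it to Billingsley. Your route is somewhat more self-contained and arguably more elementary; the paper's buys brevity by citation. One small simplification you overlooked for the ``in particular'' clause: since $\nu$ is already a countably subadditive measure on $\sigma\left(\tau\right)$ and $\Delta_R F_\nu = \nu\left(R\right)$, the inequality $\sum_n \mu_{F_\nu}\left(E_n\right) = \sum_n \nu\left(E_n\right) \geq \nu\left(\cup_n E_n\right) \geq \nu\left(R\right)$ gives $\mu_{F_\nu}^\star\left(R\right) \geq \nu\left(R\right)$ directly, without compactness --- the compactness argument is only needed when one has a premeasure $\Delta_R F$ not known in advance to extend to a measure.
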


\section{Main Result and its Corollaries}

We say that a distribution function $F : \mathbb{R} \rightarrow \left[0, 1\right],$ or a real-valued random variable $X$ with $F$ as its distribution function, satisfies the $\leq$ well-ordering condition if
the set of (jump) discontinuities for $F,$ i.e., $D_F,$ is well-ordered by $\leq$ on the real line. Similarly, 
we also define distribution functions/random variables that satisfy the $\geq$ well-ordering condition.

\begin{theorem}
    \label{thm::4} 
    Denote $X$ by a real-valued random variable with distribution function $F,$ and $U$ by another random variable that is independent of $X$ and has its distribution function $H : \mathbb{R} \rightarrow \left[0, 1\right]$ satisfying the conditions $H\left(0\right) = 0, H\left(1\right) > 0.$ 
    Let $\left(M, \mathcal{F}\right)$ be a Borel space and $\widetilde{Y} : \Omega \rightarrow M$ be a random map which is conditionally independent of $U$ given $X.$ 
    Additionally, define the Brockwell transform of $X$ to be $Z = \left(1 - U\right)F\left(X-\right) + UF\left(X\right).$

    If $F$ satisfies the $\leq$ well-ordering condition, then 
    $$Z\ \mathrm{is\ independent\ of}\ \widetilde{Y} \Rightarrow X\ \mathrm{is\ independent\ of}\ \widetilde{Y}.$$
\end{theorem}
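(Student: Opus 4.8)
The plan is to translate the hypothesis $Z \independent \widetilde{Y}$ into the integral equation~(\ref{(34)}), namely $T_{\mu_F}\kappa = \beta$ with $\kappa(x,A) = P(\widetilde{Y}\in A \mid X = x)$ and $\beta(x,A) = PZ^{-1}((-\infty,x])\cdot P\widetilde{Y}^{-1}(A)$ --- the equivalence being a routine unfolding of the definition of $Z$ together with the conditional independence of $\widetilde{Y}$ and $U$ given $X$ --- and then to show that this forces $\kappa$ to agree $\mu_F$-almost everywhere with the degenerate kernel $\kappa_0(x,A) = P\widetilde{Y}^{-1}(A)$, which is precisely $X \independent \widetilde{Y}$. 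Since $T_{\mu_F}$ is linear and the necessity direction gives $T_{\mu_F}\kappa_0 = \beta$, it is enough to prove $\ker(T_{\mu_F}) = \{0\}$ in $\mathcal{K}/\sim_{\mu_F}$; writing $\phi = \kappa - \kappa_0$, the goal becomes $\phi(\cdot,A) = 0$ $\mu_F$-a.e.\ for every $A \in \mathcal{F}$, given $T_{\mu_F}\phi = 0$.

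I would next exploit that the integrand $I_1 + I_2$ in~(\ref{(35)}) splits the problem: $I_1$ interacts only with the atoms of $\mu_F$ and $I_2$ only with its continuity part. Using the $\leq$ well-ordering of $D_F$, Lemma~\ref{lma::1} enumerates the atoms as $x_1 < x_2 < \cdots$ (terminating if $D_F$ is finite, degenerating to $x_1 = +\infty$ when $\mu_F$ is non-atomic), and the proof proceeds by induction along this enumeration with $x_0 = -\infty$. For the base interval $(-\infty, x_1)$, where $\mu_F$ is non-atomic, the term $I_1(\cdot,z)$ vanishes for every $z \in (0, \mu_F((-\infty,x_1))]$; substituting $z = F(w)$ for $w \in (-\infty, x_1)$ and invoking the almost-everywhere identity $(\overleftarrow{F}\circ F)(w) = w$ --- the strengthening of the probability integral transform in Claim 5 of Lemma~\ref{lma::2}, with Claim 4 accounting for $F^{-1}(\{0\})$ --- reduces $T_{\mu_F}\phi = 0$ to
\[
\int_{\mathbb{R}} \mathbb{I}_{(-\infty, w]}(x)\,\phi(x,A)\,\mu_F(dx) = 0
\]
holding for every $A \in \mathcal{F}$ and $\mu_F$-almost every $w \in (-\infty, x_1)$.

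The main obstacle is to differentiate this family of identities in $w$ and conclude $\phi(w,A) = 0$, since $\mu_F$ may fail to charge intervals and $w$ ranges only over a conull set, so that no classical fundamental-theorem-of-calculus result applies directly. To get around this I would: extend $F$ to a function continuous from above with non-negative rectangle increments; form the Carath\'eodory outer measure $\mu_F^\star$, which by Claim 2 of Lemma~\ref{lma::3} is Borel regular, finite on bounded sets, and extends $\mu_F$; note by Claim 1 of Lemma~\ref{lma::3} that the closed balls $[w-r, w+r]$ constitute a $\mu_F^\star$-Vitali relation on $\mathbb{R}$; and apply the abstract differentiation theorem, Claim 3 of Lemma~\ref{lma::3}, to $f = \phi(\cdot,A)$ (which is integrable on bounded sets since $\phi \in \mathcal{K}$). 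To run that theorem at a fixed good $w$, I would construct a nested sequence of closed intervals $[w_{n,1}, w_{n,2}] \ni w$ with diameter tending to $0$, whose endpoints $w_{n,i}$ are themselves chosen from the conull set where the displayed identity holds, and --- the \emph{delicate point} --- with $\mu_F([w_{n,1}, w_{n,2}]) > 0$ for all $n$; Claim 5(i) of Lemma~\ref{lma::2} together with an implication of Claim 3 of Lemma~\ref{lma::3} makes such a choice possible outside a $\mu_F$-null set of $w$'s. Then each difference quotient $\int_{[w_{n,1},w_{n,2}]}\phi(\cdot,A)\,d\mu_F \,/\, \mu_F([w_{n,1},w_{n,2}])$ is identically zero while its limit equals $\phi(w,A)$, so $\phi(\cdot,A) = 0$ $\mu_F$-a.e.\ on $(-\infty, x_1)$.

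It then remains to handle the atom $x_1$ (when $x_1 < +\infty$) and to iterate. Evaluating~(\ref{(35)}) at $x = x_1$ with $z = F(x_1)$ --- or $z = 1$ if $F(x_1) = 1$ --- isolates $I_1(\cdot,z) = H(1)\,\mathbb{I}_{\{x_1\}}(\cdot)$ because $H(0) = 0$, while Claims 4 and 8 of Lemma~\ref{lma::2} show the leftover set $(x_1, \overleftarrow{F}(F(x_1))] - D_F$ is $\mu_F$-null, killing $I_2(\cdot,z)\phi(\cdot,A)$ off the already-handled region; hence $0 = H(1)\,\phi(x_1,A)\,\mu_F(\{x_1\})$, and since $H(1) > 0$ and $\mu_F(\{x_1\}) > 0$, $\phi(x_1, A) = 0$. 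The inductive step repeats the base-case analysis on each successive gap $(x_k, x_{k+1})$ and the atom argument at $x_{k+1}$, the well-ordering (including the countably-infinite case via $x_\infty = +\infty$ in Lemma~\ref{lma::1}) guaranteeing that these pieces exhaust $\mathbb{R}$ up to a $\mu_F$-null set. Thus $\phi(\cdot,A) = 0$ $\mu_F$-a.e.\ for all $A \in \mathcal{F}$, i.e.\ $\kappa = \kappa_0$, which is exactly $X \independent \widetilde{Y}$.
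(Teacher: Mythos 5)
Your proposal reproduces the paper's argument essentially step for step: the reduction to the homogeneous integral equation $T_{\mu_F}\phi = 0$, enumeration of atoms via Lemma~\ref{lma::1}, the base case on $(-\infty, x_1)$ where the substitution $z = F(w)$ and Claim $5$ of Lemma~\ref{lma::2} yield the identity~(\ref{(36)}), the differentiation by the Vitali relation together with the nested-interval construction guaranteeing positive $\mu_F$-measure, the three-way case analysis at the atom $x_1$ using $H(1) > 0$, and induction along $\Lambda_F^\star$. The only slip is a minor misattribution---it is Claim $5$(ii), not Claim $4$, of Lemma~\ref{lma::2} that disposes of $F^{-1}(\{0\})$ and $F^{-1}(\{1\})$---which does not affect the substance of the argument.
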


\begin{remark}
    \label{rmk::4} 
    Recall Remark~\ref{rmk::1} in Lemma~\ref{lma::1} and the fact that the negation 
    \begin{align}
        - : \mathbb{R} &\rightarrow \mathbb{R} \nonumber \\
                     x &\mapsto -x \nonumber
    \end{align} is a bijection which preserves the independence property. If a random variable $X$ satisfies the $\geq$ well-ordering condition, 
    then we may apply Theorem~\ref{thm::4} to $-X$ whose set of atoms satisfies $D_{F_{-X}} = -D_{F_X},$ which is well-ordered by $\leq$ on $\mathbb{R}.$
\end{remark}

\begin{proof}
    Denote $D_F$ by the set of atoms for $X.$ 
    Due to Lemma~\ref{lma::1}, if we let $m \coloneqq \# D_F$ for which $D_F$ is at most countable due to finiteness of $PX^{-1},$ in each of the following scenarios:
    \begin{itemize}
        \item[\textbf{Case (i):}] $D_F$ \textbf{is countably infinite.}

            Arrange $D_F = \left\{ x_n \right\}_{n = 1}^\infty$ 
            such that 
            \begin{equation}
                -\infty = x_0 < x_1 < x_2 < \cdots < x_n < x_{n + 1} < \cdots < x_{\infty} = +\infty, \label{(10)}
            \end{equation} which implies $\forall n \in \mathbb{N} \Rightarrow F\left(x_n\right) \leq F\left(x_{n + 1}-\right) < F\left(x_{n + 1}\right)$ with $F\left(x_0\right) = F\left(x_0+\right) = 0, F\left(x_\infty\right) = F\left(x_\infty-\right) = 1.$
        \item[\textbf{Case (ii):}] $D_F$ \textbf{is finite.}

            In this case, also permute $D_F = \left\{ x_n \right\}_{n = 1}^m$ satisfying
            \begin{equation}
                -\infty = x_0 < x_1 < x_2 < \cdots < x_m < x_{m + 1} = +\infty, \label{(11)}
            \end{equation} where $\forall n \in \left\{ 0, 1, \cdots, m \right\} \Rightarrow F\left(x_n\right) \leq F\left(x_{n + 1}-\right) < F\left(x_{n + 1}\right)$ with $F\left(x_0\right) = F\left(x_0+\right) = 0, F\left(x_{m + 1}\right) = F\left(x_{m + 1}-\right) = 1.$
    \end{itemize}

    Up follows let $\Lambda_F \coloneqq \left\{\begin{array}{cc}
                                                   \mathbb{N}, & \mathrm{if}\ m = \aleph_0, \nonumber \\
                                                   \left\{ 0, 1, \cdots, m \right\}, & \mathrm{if}\ m < \aleph_0, \nonumber
                                               \end{array}\right.$ (as usual $\Lambda_F^\star = \Lambda_F - \left\{ 0 \right\}$) 
    \quad $\mu_F \coloneqq PX^{-1}$ be the law of $X$ on $\sigma\left(\tau_\rho\right)$ with $\rho$ being the standard ruler metric on the real line $\mathbb{R},$ 
    whereas $\nu_{\widetilde{Y}} \coloneqq P\widetilde{Y}^{-1}$ be that of $\widetilde{Y}$ on $\mathcal{F}.$ 

    With $Z \independent \widetilde{Y}$ we have for each $\left(z, A\right) \in \mathbb{R} \times \mathcal{F},$
    \begin{eqnarray}
        P\left(\left\{ Z \leq z \right\} \cap \left\{ \widetilde{Y} \in A \right\}\right) &=& P\left(\left\{ Z \leq z \right\}\right)P\left(\left\{ \widetilde{Y} \in A \right\}\right) = P\left(\left\{ Z \leq z \right\}\right)\nu_{\widetilde{Y}}\left(A\right). \nonumber\\ \label{(12)}
    \end{eqnarray}

    To simplify the left-hand side of the above, given $\left(z, A\right) \in \left(0, 1\right) \times \mathcal{F},$
    \begin{eqnarray}
        P\left(\left\{ Z \leq z\right\} \cap \left\{ \widetilde{Y} \in A \right\}\right) &=& E\left(P\left(\left\{ Z \leq z \right\} \cap \left\{ \widetilde{Y} \in A \right\}\ |\ X\right)\right) \nonumber \\
                                                                                         &=& E\left(P\left(\left\{ \mu_F\left(\left(-\infty, X\right)\right) + \mu_F\left(\left\{ X \right\}\right)U \leq z \right\} \cap \left\{ \widetilde{Y} \in A \right\}\ |\ X\right)\right) \nonumber \\
                                                                                         &=& E\left(P\left(\left\{ \mu_F\left(\left(-\infty, X\right)\right) + \mu_F\left(\left\{ X \right\}\right)U \leq z \right\} \cap \left\{ \widetilde{Y} \in A \right\} \cap \left\{ X \in D_F \right\}\ |\ X\right)\right) \nonumber \\
                                                                                         &+& E\left(P\left(\left\{ \mu_F\left(\left(-\infty, X\right]\right) \leq z \right\} \cap \left\{ \widetilde{Y} \in A \right\} \cap \left\{ X \not\in D_F \right\}\ |\ X\right)\right) \nonumber
    \end{eqnarray}
    \begin{eqnarray}
        \left((\ref{(1)})\ \mathrm{to}\ (\ref{(3)})\ \mathrm{of}\ 6\ \mathrm{in\ Lemma}~\ref{lma::2}\right)\Rightarrow &=& E\left(P\left(\left\{ U \leq \frac{z - \mu_F\left(\left(-\infty, X\right)\right)}{\mu_F\left(\left\{ X \right\}\right)}\right\} \cap \left\{ \widetilde{Y} \in A \right\} \cap \left\{ X \in D_F \right\}\ |\ X\right)\right) \nonumber \\
                                                                                         &+& E\left(P\left(\left\{ X \leq \overleftarrow{F}\left(z\right) \right\} \cap \left\{ \widetilde{Y} \in A \right\} \cap \left\{ X \not\in D_F \right\}\ |\ X\right)\right) \nonumber \\
        \left(\left(U, \widetilde{Y}\right) \big| X \sim U \times \widetilde{Y} | X\right) \Rightarrow &=& E\left(H\left(\frac{z - \mu_F\left(\left(-\infty, X\right)\right)}{\mu_F\left(\left\{ X \right\}\right)}\right)P\left(\widetilde{Y} \in A\ |\ X\right)\mathbb{I}_{\left\{ X \in D_F \right\}}\right) \nonumber \\
                                                                                         &+& E\left(P\left(\widetilde{Y} \in A\ |\ X\right)\mathbb{I}_{\left\{ X \in \left(-\infty, \overleftarrow{F}\left(z\right)\right] - D_F \right\}}\right) \nonumber \\
                                                                                         &=& \int_{\mathbb{R}} \left(I_1 + I_2\right)\left(x, z\right)P\left(\widetilde{Y} \in A\ |\ X = x\right) \mu_F\left(dx\right), \label{(13)} 
    \end{eqnarray} for which 
    \begin{eqnarray}
        I_1\left(x, z\right) &\coloneqq& H\left(\frac{z - \mu_F\left(\left(-\infty, x\right)\right)}{\mu_F\left(\left\{ x \right\}\right)}\right)\mathbb{I}_{D_F}\left(x\right), \label{(39)} \\
        I_2\left(x, z\right) &\coloneqq& \mathbb{I}_{\left(-\infty, \overleftarrow{F}\left(z\right)\right] - D_F}\left(x\right). \nonumber
    \end{eqnarray}

    Take $A = M$ in~(\ref{(13)}) above so that $\left\{ \widetilde{Y} \in M \right\} = \Omega,$ we actually know $$P\left(\left\{ Z \leq z \right\}\right) = \int_{\mathbb{R}} \left(I_1 + I_2\right)\left(x, z\right) \mu_F\left(dx\right).$$

    By substituting the above identity with~(\ref{(13)}) into~(\ref{(12)}) we obtain for each $z \in \left(0, 1\right),$ 
    \begin{equation}
        \int_{\mathbb{R}} \left(I_1 + I_2\right)\left(x, z\right)\phi_A\left(x\right) \mu_F\left(dx\right) = 0, \label{(14)}
    \end{equation}
    where \begin{align}
              \phi_A : \mathbb{R} &\rightarrow \mathbb{R} \nonumber \\
                                x &\mapsto P\left(\widetilde{Y} \in A\ \bigg|\ X = x\right) - \nu_{\widetilde{Y}}\left(A\right). \nonumber 
          \end{align}

    For induction index $n = 0,$ if $\mu_F\left(\left(-\infty, x_1\right)\right) = F\left(x_1-\right) > 0, \forall z \in \left(0, F\left(x_1-\right)\right]$ through the well-ordering condition by $\leq$ of $D_F$ in~(\ref{(10)}) and~(\ref{(11)})
    we actually have 
    $$\forall x \in \mathbb{R} \Rightarrow I_1\left(x, z\right) = \sum_{n \in \Lambda_F^\star} H\left(\frac{z - \mu_F\left(\left(-\infty, x_n\right)\right)}{\mu_F\left(\left\{ x_n \right\}\right)}\right)\mathbb{I}_{\left\{ x_n \right\}}\left(x\right) = 0.$$

    Then identity~(\ref{(14)}) when $\forall w \in \left(-\infty, x_1\right) \ni z = F\left(w\right) \in \left(0, F\left(x_1-\right)\right) \subseteq \left(0, 1\right)$ (since $F\left(x_1-\right) \leq F\left(x_1\right) \leq 1$ and also such $w$ must exist due to the definition for left limit of $F$ at $x_1$) reduces to 
    \begin{equation}
        \int_{\mathbb{R}} I_2\left(x, F\left(w\right)\right)\phi_A\left(x\right) \mu_F\left(dx\right) = 0. \nonumber
    \end{equation}

    By the first item of $5$ in Lemma~\ref{lma::2}, as $\left(-\infty, x_1\right) \subseteq D_F^\mathsf{c},$ 
    $$\mu_F\left(\left\{ w \in G : \left(\overleftarrow{F} \circ F|_G\right)\left(w\right) > w \right\} \cap \left(-\infty, x_1\right)\right) = 0,$$ 
    for which $\forall w \in \left\{ w \in G : \left(\overleftarrow{F} \circ F|_G\right)\left(w\right) = w \right\} \cap \left(-\infty, x_1\right)$ we have 
    \begin{equation} 
        \int_{\mathbb{R}} \mathbb{I}_{\left(-\infty, w\right]}\left(x\right)\phi_A\left(x\right) \mu_F\left(dx\right) = 0. \label{(15)}
    \end{equation}

    For each of the above fixed $w$ and $B_\rho\left[v, r\right] = \left[v - r, v + r\right]$ such that $w \in B_\rho\left[v, r\right]$ and $\mu_F\left(B_\rho\left[v, r\right]\right) > 0,$ 
    we must have, since $\mu_F\left(\left\{ w \right\}\right) = 0$ due to $\left(-\infty, x_1\right) \subseteq D_F^\mathsf{c}$ so that $\mu_F\left(B_\rho\left[v, r\right] - \left\{ w \right\}\right) > 0:$
    \begin{eqnarray}
        && \mathrm{If}\ \mu_F\left(B_\rho\left[v, r\right] \cap \left(-\infty, w\right)\right) > 0, \nonumber \\
        &&\left(B_\rho\left[v, r\right] \cap \left(-\infty, w\right)\right) \cap \left(\left\{ w \in G : \left(\overleftarrow{F} \circ F|_G\right)\left(w\right) = w \right\} \cap \left(-\infty, x_1\right)\right) \neq \varnothing. \nonumber \\
        && \mathrm{If}\ \mu_F\left(B_\rho\left[v, r\right] \cap \left(w, +\infty\right)\right) > 0, \nonumber \\
        && \left(B_\rho\left[v, r\right] \cap \left(w, +\infty\right)\right) \cap \left(\left\{ w \in G : \left(\overleftarrow{F} \circ F|_G\right)\left(w\right) = w \right\} \cap \left(-\infty, x_1\right)\right) \neq \varnothing. \nonumber \\ \label{(16)}
    \end{eqnarray}

    Otherwise, there would exist a $B_\rho\left[v_0, r_0\right] \ni w$ with $\mu_F\left(B_\rho\left[v_0, r_0\right]\right) > 0$ such that, for instance, if $\mu_F\left(B_\rho\left[v_0, r_0\right] \cap \left(-\infty, w\right)\right) > 0,$ 
    \begin{eqnarray}
        B_\rho\left[v_0, r_0\right] \cap \left(-\infty, w\right) &\subseteq& \left\{ w \in G : \left(\overleftarrow{F} \circ F|_G\right)\left(w\right) > w \right\} \cap \left(-\infty, x_1\right) \nonumber \\
        \Rightarrow 0 < \mu_F\left(B_\rho\left[v_0, r_0\right] \cap \left(-\infty, w\right)\right) &\leq& \mu_F\left(\left\{ w \in G : \left(\overleftarrow{F} \circ F|_G\right)\left(w\right) > w \right\} \cap \left(-\infty, x_1\right)\right) = 0, \nonumber
    \end{eqnarray} due to the monotonicity of $\mu_F$ which is an absurdity.

    Note that
    \begin{enumerate}
        \item[(i)]
            $F : \mathbb{R} \rightarrow \left[0, 1\right]$ is continuous from above, due to its right continuity, and also $\forall \left(a, b\right] \in \mathcal{R}_1 \Rightarrow \Delta_{\left(a, b\right]} F = F\left(b\right) - F\left(a\right) \geq 0$ by monotonicity of $F,$
        \item[(ii)]
            By the existence of regular conditional distribution of $\widetilde{Y}$ given $\sigma\left(\left\{ X \right\}\right)$ as a stochastic kernel 
            (for example Definition $8.25, 8.28$ along with Theorem $8.37$ on pp. $204$--$205$ of \cite{klenke2013probability}), 
            for each $A \in \mathcal{F},$ 
            $x \mapsto P\left(\widetilde{Y} \in A\ |\ X = x\right)$ is $\sigma\left(\left\{ X \right\}\right)$-measurable, hence further $\mu_F^\star$-measurable as 
            $\sigma\left(\left\{ X \right\}\right) \subseteq \sigma\left(\tau_\rho\right) \subseteq \mathcal{L}_{\mu_F^\star},$ then so is $\phi_A.$
        \item[(iii)]
            Thirdly, 
            \begin{eqnarray}
                \int_{\mathbb{R}} \left|\phi_A\left(x\right)\right| \mu_F^\star\left(dx\right) &=& \int_{\mathbb{R}} \left|\phi_A\left(x\right)\right| \mu_F\left(dx\right) \nonumber \\
                                                                                               &\leq& 2\mu_F\left(\mathbb{R}\right) = 2 < +\infty, \nonumber
            \end{eqnarray} where the first equality is for $\mu_F^\star|_{\sigma\left(\tau_\rho\right)} = \mu_F$ 
            from the ``In particular'' part of $2$ in Lemma~\ref{lma::3} whereas the second $\forall x \in \mathbb{R}, 0 \leq P\left(\widetilde{Y} \in A\ |\ X = x\right) \leq 1$ along with $0 \leq \nu_{\widetilde{Y}}\left(A\right) \leq 1.$
    \end{enumerate}

    Then we appeal to the third item of Lemma~\ref{lma::3} to conclude that $\mu_F^\star\left(\mathbb{R} - E\right) = 0,$ where
    \begin{eqnarray}
        E &\coloneqq& \left\{ w \in \mathbb{R} : \left(\mathfrak{V}\right) \lim_{B_\rho\left[v, r\right] \rightarrow w} \frac{\int_{B_\rho\left[v, r\right]} \phi_A d\mu_F}{\mu_F\left(B_\rho\left[v, r\right]\right)} = \phi_A\left(w\right) \right\}, \nonumber \\
        \mathfrak{V} &\coloneqq& \left\{ \left(v, B_\rho\left[v, r\right]\right) : \left(v, r\right) \in \mathbb{R} \times \left(0, +\infty\right) \right\}, \nonumber
    \end{eqnarray} and $\mathfrak{V}\left(\mathbb{R}^n\right) \subseteq \sigma\left(\tau_\rho\right)$ along with the $\sigma\left(\tau_\rho\right)$-measurability of each $\phi_A$ allows us to interchange 
    $\mu_F$ and $\mu^\star_F$ in the numerator and denominator of $\mathfrak{V}$-limit.

    Additionally, notice that 
    \begin{equation}
        E \subseteq \left\{ w \in \mathbb{R} : \mathfrak{V} \cap \left\{ \left(w, B_\rho\left[v, r\right]\right) : \mu_F\left(B_\rho\left[v, r\right]\right) = 0 \right\} \mathrm{is\ not\ fine\ at}\ w \right\}. \label{(17)}
    \end{equation}

    Now for each $w \in \left\{ w \in G : \left(\overleftarrow{F} \circ F|_G\right)\left(w\right) = w \right\} \cap \left(-\infty, x_1\right) \cap E,$ first of all,

    \begin{eqnarray}
        s = 2\inf \left\{ r : \left(w, B_\rho\left[v. r\right]\right) \in \mathfrak{V}, \mu_F\left(B_\rho\left[v, r\right]\right) = 0 \right\} > 0, \nonumber
    \end{eqnarray} due to inequality~(\ref{(17)}) and then for $n = 1, \epsilon_1 \coloneqq \min\left\{ 1, s \right\} > 0,$ as 
    $$\mathfrak{V} \cap \left\{ \left(v, B_\rho\left[v, r\right]\right) : \mu_F\left(B_\rho\left[v, r\right]\right) = 0 \right\} \mathrm{is\ not\ fine\ at}\ w$$ but $\mathfrak{V}$ is fine at $w$ we know there exists 
    $B_\rho\left[v_1, r_1\right] \ni w$ such that $r_1 < \frac{\epsilon_1}{2}$ and also $\mu_F\left(B_\rho\left[v_1, r_1\right]\right) > 0.$ 
    By~(\ref{(16)}), we further know
    \begin{itemize}
        \item[(i)] when $\mu_F\left(B_\rho\left[v_1, r_1\right] \cap \left(-\infty, w\right)\right) > 0,$ 

            we may pick a
            $w_{1, 1} \in \left(B_\rho\left[v_1, r_1\right] \cap \left(-\infty, w\right)\right) \cap \left\{ w \in G : \left(\overleftarrow{F} \circ F|_G\right)\left(w\right) = w \right\} \cap \left(-\infty, x_1\right),$ and 
        \item[(ii)] when $\mu_F\left(B_\rho\left[v_1, r_1\right] \cap \left(w, +\infty\right)\right) > 0,$

            we may select another
            $w_{1, 2} \in \left(B_\rho\left[v_1, r_1\right] \cap \left(w, +\infty\right)\right) \cap \left\{ w \in G : \left(\overleftarrow{F} \circ F|_G\right)\left(w\right) = w \right\} \cap \left(-\infty, x_1\right).$ 
    \end{itemize} If either one of them vanishes then we set the corresponding $w_{1, k} \coloneqq w$ so at least one of $i \in \left\{ 1, 2 \right\}$ satisfies $w_{1, i} \neq w$ and hence $\delta w_1 \coloneqq w_{1, 2} - w_{1, 1} \in \left(0, 2r_1\right].$ 
    We also denote $\overline{w}_1 \coloneqq \frac{w_{1, 1} + w_{1, 2}}{2},$
    $$h_1 \coloneqq \left\{\begin{array}{ll}
                               \frac{1}{2}\min \left\{ w - w_{1, 1}, w_{1, 2} - w \right\}, & \left(w_{1, 1} \neq w\right) \mathrm{and} \left(w_{1, 2} \neq w\right), \\
                               \frac{\delta w_1}{2}, & \left(w_{1, 1} \neq w\right) \mathrm{xor} \left(w_{1, 2} \neq w\right),
                           \end{array}\right.$$ where xor means exclusive or logical binary operation on two propositions.

    For $n = 2, \epsilon_2 \coloneqq \min \left\{ \frac{1}{2}, s, h_1 \right\} > 0,$ same rationale yields the existence of 
    $B_\rho\left[v_2, r_2\right] \ni w$ such that $r_2 < \frac{\epsilon_2}{2}$ and also $\mu_F\left(B_\rho\left[v_2, r_2\right]\right) > 0$ 
    but due to the choice of $\epsilon_2$ and $\left\{ w_{1, i} \right\}_{i = 1}^2$ we know $B_\rho\left[v_2, r_2\right] \subseteq \left[w_{1, 1}, w_{1, 2}\right] = B_\rho\left[\overline{w}_1, \frac{\delta w_1}{2}\right]$ 
    so that $\mu_F\left(B_\rho\left[\overline{w}_1, \frac{\delta w_1}{2}\right]\right) \geq \mu_F\left(B_\rho\left[v_2, r_2\right]\right) > 0.$

    Suppose that $\forall i \leq n, n \in \mathbb{N}^\star$ we have constructed two sequence of $\rho$-balls $\left\{ B_\rho\left[\overline{w}_i, \frac{\delta w_i}{2}\right] \right\}_{i = 1}^n \cup \left\{ B_\rho\left[v_i, r_i\right] \right\}_{i = 1}^{n + 1}$ 
    with $\forall i = 1, 2, \cdots, n + 1, r_i < \frac{\epsilon_i}{2}, \mu_F\left(B_\rho\left[v_i, r_i\right]\right) > 0$ where $\epsilon_i \coloneqq \min \left\{ \frac{1}{i}, s, h_{i - 1} \right\}$ 
    such that $\forall i = 1, 2, \cdots, n, B_\rho\left[v_{i + 1}, r_{i + 1}\right] \subseteq B_\rho\left[\overline{w}_i, \frac{\delta w_i}{2}\right]$ along with a sequence of 
    \begin{itemize}
        \item[(i)] $\mu_F\left(B_\rho\left[v_i, r_i\right] \cap \left(-\infty, w\right)\right) > 0,$ 
    
            $w_{i, 1} \in \left(B_\rho\left[v_i, r_i\right] \cap \left(-\infty, w\right)\right) \cap \left\{ w \in G : \left(\overleftarrow{F} \circ F|_G\right)\left(w\right) = w \right\} \cap \left(-\infty, x_1\right).$ 
        \item[(ii)] $\mu_F\left(B_\rho\left[v_i, r_i\right] \cap \left(w, +\infty\right)\right) > 0,$
    
            $w_{i, 2} \in \left(B_\rho\left[v_i, r_i\right] \cap \left(w, +\infty\right)\right) \cap \left\{ w \in G : \left(\overleftarrow{F} \circ F|_G\right)\left(w\right) = w \right\} \cap \left(-\infty, x_1\right).$ 
    \end{itemize} If either one of them vanishes, for which at most one may do, then we set the corresponding $w_{i, k} \coloneqq w$ and $\overline{w}_i \coloneqq \frac{w_{i, 1} + w_{i, 2}}{2}, \delta w_i \coloneqq w_{i, 2} - w_{i, 1} \in \left(0, 2r_i\right],$
    $$h_i \coloneqq \left\{\begin{array}{ll}
                               \frac{1}{2}\min \left\{ w - w_{i, 1}, w_{i, 2} - w \right\}, & \left(w_{i, 1} \neq w\right) \mathrm{and} \left(w_{i, 2} \neq w\right), \\
                               \frac{\delta w_i}{2}, & \left(w_{i, 1} \neq w\right) \mathrm{xor} \left(w_{i, 2} \neq w\right).
                           \end{array}\right.$$ ($\delta w_0 = 2$)

    Then for $i = n + 1,$ as $\mu_F\left(\left\{ w \right\}\right) = 0,$ we again separately discuss case by case: 
    \begin{itemize}
        \item[(i)] $\mu_F\left(B_\rho\left[v_{n + 1}, r_{n + 1}\right] \cap \left(-\infty, w\right)\right) > 0,$ 

            select $w_{n + 1, 1} \in \left(B_\rho\left[v_{n + 1}, r_{n + 1}\right] \cap \left(-\infty, w\right)\right) \cap \left\{ w \in G : \left(\overleftarrow{F} \circ F|_G\right)\left(w\right) = w \right\} \cap \left(-\infty, x_1\right).$ 
        \item[(ii)] $\mu_F\left(B_\rho\left[v_{n + 1}, r_{n + 1}\right] \cap \left(w, +\infty\right)\right) > 0,$

            pick $w_{n + 1, 2} \in \left(B_\rho\left[v_{n + 1}, r_{n + 1}\right] \cap \left(w, +\infty\right)\right) \cap \left\{ w \in G : \left(\overleftarrow{F} \circ F|_G\right)\left(w\right) = w \right\} \cap \left(-\infty, x_1\right).$ 
    \end{itemize} If either one of them vanishes, for which at most one will do, then we set the corresponding $w_{n + 1, k} \coloneqq w, \overline{w}_{n + 1} \coloneqq \frac{w_{n + 1, 1} + w_{n + 1, 2}}{2}, \delta w_{n + 1} \coloneqq w_{n + 1, 2} - w_{n + 1, 1} \in \left(0, 2r_{n + 1}\right]$
    as well as $$h_{n + 1} \coloneqq \left\{\begin{array}{ll}
                                        \frac{1}{2} \min \left\{ w - w_{n + 1, 1}, w_{n + 1, 2} - w \right\}, & \left(w_{n + 1, 1} \neq w\right) \mathrm{and} \left(w_{n + 1, 2} \neq w\right), \\
                                        \frac{\delta w_{n + 1}}{2}, & \left(w_{n + 1, 1} \neq w\right) \mathrm{xor} \left(w_{n + 1, 2} \neq w\right).
                                    \end{array}\right.$$

    Take $\epsilon_{n + 2} \coloneqq \min \left\{ \frac{1}{n + 2}, s, h_{n + 1} \right\} > 0.$ Again as 
    $$\mathfrak{V} \cap \left\{ \left(v, B_\rho\left[v, r\right]\right) : \mu_F\left(B_\rho\left[v, r\right]\right) = 0 \right\} \mathrm{is\ not\ fine\ at}\ w$$ but $\mathfrak{V}$ is fine at $w$
    we can find out a $B_\rho\left[v_{n + 2}, r_{n + 2}\right] \ni w$ such that $r_{n + 2} < \frac{\epsilon_{n + 2}}{2}$ and also $\mu_F\left(B_\rho\left[v_{n + 2}, r_{n + 2}\right]\right) > 0.$
    However, the choice of $\epsilon_{n + 2}$ and $\left\{ w_{n + 1, 1}, w_{n + 1, 2} \right\}$ enforces $B_\rho\left[v_{n + 2}, r_{n + 2}\right] \subseteq \left[w_{n + 1, 1}, w_{n + 1, 2}\right] = B_\rho\left[\overline{w}_{n + 1}, \frac{\delta w_{n + 1}}{2}\right]$ 
    so that $\mu_F\left(B_\rho\left[\overline{w}_{n + 1}, \frac{\delta w_{n + 1}}{2}\right]\right) \geq \mu_F\left(B_\rho\left[v_{n + 2}, r_{n + 2}\right]\right) > 0$ which completes the deductive step for mathematical induction.

    Observing the sequence of $\rho$-balls we constructed $\left\{ B_\rho\left[\overline{w}_n, \frac{\delta w_n}{2}\right] \right\}_{n = 1}^\infty$ 
    from the above procedure:
    \begin{itemize}
        \item[(i)] $0 < \delta w_n \leq 2r_n < \epsilon_n \leq \frac{1}{n} \rightarrow 0$ as $n \rightarrow \infty,$
        \item[(ii)] $\forall n \in \mathbb{N}^\star \Rightarrow \mu_F\left(B_\rho\left[\overline{w}_n, \frac{\delta w_n}{2}\right]\right) > 0,$
        \item[(iii)] $\forall n \in \mathbb{N}^\star, B_\rho\left[\overline{w}_n, \frac{\delta w_n}{2}\right] = \left[w_{n, 1}, w_{n, 2}\right] \ni w$ with 
            $$\left\{ w_{n, 1}, w_{n, 2} \right\} \subseteq \left\{ w \in G : \left(\overleftarrow{F} \circ F|_G\right)\left(w\right) = w \right\} \cap \left(-\infty, x_1\right),$$ 
            due to the rule in our selection procedure and also $w \in \left\{ w \in G : \left(\overleftarrow{F} \circ F|_G\right)\left(w\right) = w \right\} \cap \left(-\infty, x_1\right)$ at onset.
    \end{itemize}

    Now applying~(\ref{(15)}) above separately towards each $w_{n, k}, k = 1, 2$, as $\int_{\mathbb{R}} \left|\phi_A\right| d\mu_F < +\infty,$
    \begin{eqnarray}
        \int_{B_\rho\left[\overline{w}_n, \frac{\delta w_n}{2}\right]} \phi_A d\mu_F &=& \left(\int_{\left(-\infty, w_{n, 2}\right]} - \int_{\left(-\infty, w_{n, 2}\right)}\right) \phi_A d\mu_F \nonumber \\
                                                                                     &=& \left(\int_{\left(-\infty, w_{n, 2}\right]} - \int_{\left(-\infty, w_{n, 2}\right]}\right) \phi_A d\mu_F \nonumber \\
                                                                                     &=& 0, \nonumber 
    \end{eqnarray} where the second equality is again due to $\mu_F\left(\left\{ w_{n, 2} \right\}\right) = 0$ by $\left(-\infty, x_1\right) \subseteq D_F^\mathsf{c}.$

    Then we sandwich
    \begin{eqnarray}
        && \inf \left\{ \frac{\int_{B_\rho\left[v, r\right]} \phi_A d\mu_F}{\mu_F\left(B_\rho\left[v, r\right]\right)} : \left(w, B_\rho\left[v, r\right]\right) \in \mathfrak{V}, r < \frac{\epsilon_n}{2} \right\} \nonumber \\
        &\leq& \frac{\int_{B_\rho\left[\overline{w}_n, \frac{\delta w_n}{2}\right]} \phi_A d\mu_F}{\mu_F\left(B_\rho\left[\overline{w}_n, \frac{\delta w_n}{2}\right]\right)} \nonumber \\
        &\leq& \sup \left\{ \frac{\int_{B_\rho\left[v, r\right]} \phi_A d\mu_F}{\mu_F\left(B_\rho\left[v, r\right]\right)} : \left(w, B_\rho\left[v, r\right]\right) \in \mathfrak{V}, r < \frac{\epsilon_n}{2} \right\}, \nonumber
    \end{eqnarray} and let $n \rightarrow \infty$ and note that $w \in E$ we get $\phi_A\left(w\right) = 0,$ i.e., 
    $$\left\{ w \in G : \left(\overleftarrow{F} \circ F|_G\right)\left(w\right) = w \right\} \cap \left(-\infty, x_1\right) \cap E \subseteq \phi_A^{-1}\left(\left\{ 0 \right\}\right),$$
    for which again as $\mu_F^\star|_{\sigma\left(\tau_\rho\right)} = \mu_F$ and $\phi_A$ is $\sigma\left(\tau_\rho\right)$-measurable,
    \begin{eqnarray}
        0 &\leq& \mu_F\left(\left(-\infty, x_1\right) - \phi_A^{-1}\left(\left\{ 0 \right\}\right)\right) = \mu_F^\star\left(\left(-\infty, x_1\right) - \phi_A^{-1}\left(\left\{ 0 \right\}\right)\right) \nonumber \\
          &\leq& \mu_F\left(\left\{ w \in G : \left(\overleftarrow{F} \circ F|_G\right)\left(w\right) > w \right\} \cap \left(-\infty, x_1\right)\right) + \mu_F\left(\left(-\infty, x_1\right) - G\right) + \mu^\star_F\left(\mathbb{R} - E\right) \nonumber \\
          &\leq& 0, \nonumber
    \end{eqnarray} where the last inequality is due to 
    \begin{eqnarray}
        0 \leq \mu_F\left(\left(-\infty, x_1\right) - G\right) &=& \mu_F\left(\left(-\infty, x_1\right) \cap F^{-1}\left(\left\{ 0 \right\}\right)\right) + \mu_F\left(\left(-\infty, x_1\right) \cap F^{-1}\left(\left\{ 1 \right\}\right)\right) \nonumber \\
                                                               &\leq& \mu_F\left(F^{-1}\left(\left\{ 0 \right\}\right)\right) + \mu_F\left(\varnothing\right) = 0, \nonumber
    \end{eqnarray} by appealing to the (ii) of $5$ in Lemma~\ref{lma::2} and also that $F\left(x_1-\right) < F\left(x_1\right) \leq 1.$

    We have thus proved if $\mu_F\left(\left(-\infty, x_1\right)\right) = F\left(x_1-\right) > 0,$ then 
    $$\phi_A^{-1}\left(\left\{ 0 \right\}\right) \cap \left(-\infty, x_1\right) = \left(-\infty, x_1\right)\ \left[\mu_F\right],$$  
    for which it also holds if $\mu_F\left(\left(-\infty, x_1\right)\right) = F\left(x_1-\right) = 0$ as two sets on both sides are $\mu_F$-null in this case. With this we know,
    $\int_{\mathbb{R}} \mathbb{I}_{\left(-\infty, x_1\right)}\phi_A d\mu_F = 0.$

    Now,
    \begin{itemize}
        \item[\textbf{Case (i):}] $F\left(x_1\right) < 1$\textbf{.}
        
            Take $w = x_1, z = F\left(x_1\right) \in \left(F\left(x_1-\right), 1\right) \subseteq \left(0, 1\right)$ (since $F\left(x_1-\right) \geq 0$) in~(\ref{(14)}). In this case because $\forall n \in \Lambda_F^\star \ni n \geq 2 \Rightarrow F\left(x_1\right) - \mu_F\left(\left(-\infty, x_n\right)\right) = F\left(x_1\right) - F\left(x_n-\right) \leq 0$ 
            and also $F\left(x_1\right) - \mu_F\left(-\infty, x_1\right) = F\left(x_1\right) - F\left(x_1-\right) = \mu_F\left(\left\{ x_1 \right\}\right) > 0,$ then
            \begin{eqnarray}
                \forall x \in \mathbb{R} \Rightarrow I_1\left(x, F\left(x_1\right)\right) &=& H\left(\frac{F\left(x_1\right) - \mu_F\left(\left(-\infty, x_1\right)\right)}{\mu_F\left(\left\{ x_1 \right\}\right)}\right)\mathbb{I}_{\left\{ x_1 \right\}}\left(x\right) \nonumber \\
                                                                                          &=& H\left(1\right)\mathbb{I}_{\left\{ x_1 \right\}}\left(x\right). \label{(18)}
            \end{eqnarray}

            On the other hand, notice that as $\overleftarrow{F}\left(F\left(x_1\right)\right) \geq x_1$ by fourth bullet of Lemma~\ref{lma::2}, $\left(-\infty, \overleftarrow{F}\left(F\left(x_1\right)\right)\right] - D_F = \left(-\infty, x_1\right) \sqcup \left(\left(x_1, \overleftarrow{F}\left(F\left(x_1\right)\right)\right] - D_F\right).$
            With this it suffices to focus upon the case where $x_1 < \overleftarrow{F}\left(F\left(x_1\right)\right),$ in other words $\left(x_1, \overleftarrow{F}\left(F\left(x_1\right)\right)\right] \neq \varnothing.$

            If $x_2 < +\infty,$ then from~(\ref{(10)}) and~(\ref{(11)}) $x_2 \in \Lambda_F^\star$ and hence $F\left(x_2\right) > F\left(x_1\right)$ with $x_2 > x_1,$ 
            with the monotonicity of $F$ we know $x_2$ is an upper bound for $F^{-1}\left(\left(-\infty, F\left(x_1\right)\right]\right)$
            so that $\overleftarrow{F}\left(F\left(x_1\right)\right) \leq x_2.$ 
            Otherwise, $x_2 = +\infty, \overleftarrow{F}\left(F\left(x_1\right)\right) \leq x_2$ holds all the same.

            \begin{itemize}
                \item[\textbf{Subcase (a):}] $\overleftarrow{F}\left(F\left(x_1\right)\right) < x_2$\textbf{.} 

                    Then $\overleftarrow{F}\left(F\left(x_1\right)\right) \in \left(x_1, x_2\right) \subseteq D_F^\mathsf{c}$ 
                    and this implies $\left(x_1, \overleftarrow{F}\left(F\left(x_1\right)\right)\right] \subseteq \left(x_1, x_2\right) \subseteq D_F^\mathsf{c}$ so that 
                    \begin{eqnarray}
                        0 \leq \mu_F\left(\left(x_1, \overleftarrow{F}\left(F\left(x_1\right)\right)\right] - D_F\right) &=& \mu_F\left(\left(x_1, \overleftarrow{F}\left(F\left(x_1\right)\right)\right]\right) \nonumber \\
                                                                                                                         &=& F\left(\overleftarrow{F}\left(F\left(x_1\right)\right)\right) - F\left(x_1\right) \nonumber \\
                                                                                                                         &=& F\left(\overleftarrow{F}\left(F\left(x_1\right)\right)-\right) - F\left(x_1\right) \nonumber \\
                                                                                                                         &\leq& 0, \nonumber
                    \end{eqnarray} where the last inequality is due to the eighth item of Lemma~\ref{lma::2}.

                \item[\textbf{Subcase (b):}] $\overleftarrow{F}\left(F\left(x_1\right)\right) = x_2$\textbf{.} 

                    In this case necessarily $x_2 < +\infty$ so $F$ jumps at $x_2$ and hence, 
                    \begin{eqnarray}
                        \left(x_1, \overleftarrow{F}\left(F\left(x_1\right)\right)\right] - D_F &=& \left(x_1, \overleftarrow{F}\left(F\left(x_1\right)\right)\right) \nonumber \\
                        \Rightarrow 0 \leq \mu_F\left(\left(x_1, \overleftarrow{F}\left(F\left(x_1\right)\right)\right] - D_F\right) &=& \mu_F\left(\left(x_1, \overleftarrow{F}\left(F\left(x_1\right)\right)\right)\right) \nonumber \\
                                                                                                &=& F\left(\overleftarrow{F}\left(F\left(x_1\right)\right)-\right) - F\left(x_1\right) \nonumber \\
                                                                                                &\leq& 0. \nonumber
                    \end{eqnarray}
            \end{itemize}

            We thus know $\mu_F\left(\left(x_1, \overleftarrow{F}\left(F\left(x_1\right)\right)\right] - D_F\right) = 0,$ so that $\mathbb{I}_{\left(x_1, \overleftarrow{F}\left(F\left(x_1\right)\right)\right] - D_F} = 0\ \left[\mu_F\right].$
            In conjunction with $\int_{\mathbb{R}} \mathbb{I}_{\left(-\infty, x_1\right)} \phi_A d\mu_F = 0$ by linearity we obtain
            \begin{eqnarray}
                \int_{\mathbb{R}} I_2\left(\cdot, F\left(x_1\right)\right) \phi_A d\mu_F &=& \int_{\mathbb{R}} \mathbb{I}_{\left(-\infty, x_1\right)} \phi_A d\mu_F + \int_{\mathbb{R}} \mathbb{I}_{\left(x_1, \overleftarrow{F}\left(F\left(x_1\right)\right)\right] - D_F} \phi_A d\mu_F \nonumber \\
                                                                                         &=& 0. \nonumber
            \end{eqnarray}

            Substituting identity~(\ref{(18)}) and above back to~(\ref{(14)}) simply gives $H\left(1\right)\mu_F\left(\left\{ x_1 \right\}\right)\phi_A\left(x_1\right) = 0,$ 
            which implies $\phi_A\left(x_1\right) = 0$ as $H\left(1\right) > 0$ by assumption and $\mu_F\left(\left\{ x_1 \right\}\right) > 0$ in the current case where $F\left(x_1\right) < 1.$

        \item[\textbf{Case (ii):}] $F\left(x_1\right) = 1$ \textbf{and} $x_1 < +\infty$\textbf{.} 

            Then we can let $z = F\left(x_1\right) = 1$ in the derivations that lead to~(\ref{(13)}) 
            and make modifications correspondingly by noting that $\left\{ \mu_F\left(\left(-\infty, X\right]\right) \leq 1 \right\} = \Omega,$ 
            finally obtaining the counterpart of identity~(\ref{(14)}) in this case as 
            \begin{equation}
                \int_{\mathbb{R}} \left[I_1\left(x, F\left(x_1\right)\right) + I_2\left(x\right)\right]\phi_A\left(x\right) \mu_F\left(dx\right) = 0, \label{(19)}
            \end{equation}
            with $I_1$ defined as in~(\ref{(39)}) but $I_2 = \mathbb{I}_{D_F^\mathsf{c}}.$

            Further, observe that $\forall x \geq x_1 \Rightarrow F\left(x\right) = 1$ by monotonicity of $F,$ which implies through~(\ref{(11)}) that $D_F = \left\{ x_1 \right\}.$ Additionally, 
            $D_F^\mathsf{c} = \left[\left(-\infty, x_1\right) \cap D_F^\mathsf{c}\right] \sqcup \left[\left(x_1, +\infty\right) \cap D_F^\mathsf{c}\right]$ for which 
            \begin{itemize}
                \item[(a)] 
                    $\phi_A^{-1}\left(\left\{ 0 \right\}\right) \cap \left(-\infty, x_1\right) = \left(-\infty, x_1\right)\ \left[\mu_F\right]$ implies $\mathbb{I}_{\left(-\infty, x_1\right) \cap D_F^\mathsf{c}}\phi_A = 0\ \left[\mu_F\right],$
                \item[(b)]
                    $0 \leq \mu_F\left(\left(x_1, +\infty\right) \cap D_F^\mathsf{c}\right) \leq \mu_F\left(\left(x_1, +\infty\right)\right) = 1 - F\left(x_1\right) = 0$ yields $\mathbb{I}_{\left(x_1, +\infty\right) \cap D_F^\mathsf{c}}\phi_A = 0\ \left[\mu_F\right],$
            \end{itemize} so that we know $I_2\phi_A = 0\ \left[\mu_F\right].$ Also,
            \begin{eqnarray}
                \forall x \in \mathbb{R} \Rightarrow I_1\left(x, F\left(x_1\right)\right) &=& H\left(\frac{F\left(x_1\right) - \mu_F\left(\left(-\infty, x_1\right)\right)}{\mu_F\left(\left\{ x_1 \right\}\right)}\right)\mathbb{I}_{\left\{ x_1 \right\}}\left(x\right) \nonumber \\
                                                                                          &=& H\left(1\right)\mathbb{I}_{\left\{ x_1 \right\}}\left(x\right), \nonumber
            \end{eqnarray} so that substituting these known facts back to identity~(\ref{(19)}) 
            we again obtain $H\left(1\right)\mu_F\left(\left\{ x_1 \right\}\right)\phi_A\left(x_1\right) = 0$ and hence $\phi_A\left(x_1\right) = 0.$

        \item[\textbf{Case (iii):}] $x_1 = +\infty$\textbf{.}

            This remaining case is clear due to~(\ref{(11)}), $\# D_F = m = 0.$
    \end{itemize}

    The above discussion under induction index $k = 0$ informs us that not only $\phi_A^{-1}\left(\left\{ 0 \right\}\right) \cap \left(-\infty, x_1\right) = \left(-\infty, x_1\right)\ \left[\mu_F\right]$ but also so long as $x_1 < +\infty, \phi_A\left(x_1\right) = 0.$ Notice that by~(\ref{(10)}) and~(\ref{(11)}), 
    \begin{eqnarray}
        x_1 < +\infty \Leftrightarrow \Lambda_F^\star \neq \varnothing \Leftrightarrow \# D_F = m > 0. \nonumber
    \end{eqnarray}

    Assume $\Lambda_F^\star \neq \varnothing,$ given $k \in \Lambda_F^\star,$ assume for each $1 \leq n \leq k,$ we have $\phi_A^{-1}\left(\left\{ 0 \right\}\right) \cap \left(x_{n - 1}, x_n\right) = \left(x_{n - 1}, x_n\right)\ \left[\mu_F\right]$ and also,
    \begin{equation}
        \forall 1 \leq n \leq k \Rightarrow \phi_A\left(x_n\right) = 0. \label{(20)}
    \end{equation}

    Let us look at $n = k + 1:$
    \begin{itemize}
        \item[\textbf{Case (i):}] $\mu_F\left(\left(x_k, x_{k + 1}\right)\right) = F\left(x_{k + 1}-\right) - F\left(x_k\right) > 0$\textbf{.} 

            Then $\forall z \in \left(F\left(x_k\right), F\left(x_{k + 1}-\right)\right],$ we have
            $$\forall n \geq k + 1, n \in \Lambda_F^\star \Rightarrow z - \mu_F\left(\left(-\infty, x_n\right)\right) \leq F\left(x_{k + 1}-\right) - F\left(x_n-\right) \leq 0,$$
            \begin{eqnarray}
                \forall x \in \mathbb{R} &\Rightarrow& I_1\left(x, z\right)\phi_A\left(x\right) \nonumber \\
                                         &=& \sum_{n \in \Lambda_F^\star - \left\{ 1, 2, \cdots, k \right\}} H\left(\frac{z - \mu_F\left(\left(-\infty, x_n\right)\right)}{\mu_F\left(\left\{ x_n \right\}\right)}\right)\phi_A\left(x_n\right)\mathbb{I}_{\left\{ x_n \right\}}\left(x\right) \nonumber \\
                                         &=& 0. \nonumber
            \end{eqnarray}

            Now by identity~(\ref{(14)}) again whence $\forall w \in \left(x_k, x_{k + 1}\right) \ni z = F\left(w\right) \in \left(F\left(x_k\right), F\left(x_{k + 1}-\right)\right) \subseteq \left(0, 1\right)$ 
            (since $0 \leq F\left(x_k\right) < F\left(x_{k + 1}-\right) \leq F\left(x_{k + 1}\right) \leq 1$ and the existence of such $w$ is guaranteed by $F\left(x_{k + 1}-\right) - F\left(x_k\right) > 0$ along with definition for left limit of $F$ at $x_{k + 1}$) reduces to 
            \begin{equation}
                \int_{\mathbb{R}} I_2\left(x, F\left(w\right)\right)\phi_A\left(x\right) \mu_F\left(dx\right) = 0. \nonumber
            \end{equation}

            From inductive assumption due to the finiteness of $k,$ 
            \begin{eqnarray}
                \phi_A^{-1}\left(\left\{ 0 \right\}\right) \cap \left(-\infty, x_k\right) &=& \left[\sqcup_{n = 1}^k \phi_A^{-1}\left(\left\{ 0 \right\}\right) \cap \left(x_{n - 1}, x_n\right)\right] \sqcup \left[\sqcup_{n = 1}^{k - 1} \phi_A^{-1}\left(\left\{ 0 \right\}\right) \cap \left\{ x_n \right\}\right] \nonumber \\
                                                                                          &=& \left[\sqcup_{n = 1}^k \left(x_{n - 1}, x_n\right)\right] \sqcup \left[\sqcup_{n = 1}^{k - 1} \left\{ x_n \right\}\right]\ \left[\mu_F\right] \nonumber \\
                                                                                          &=& \left(-\infty, x_k\right), \nonumber
            \end{eqnarray} while on the other hand by fourth bullet of Lemma~\ref{lma::2} for each of the above fixed $w \in \left(x_k, x_{k + 1}\right), \overleftarrow{F}\left(F\left(w\right)\right) \geq w > x_k,$ 
            we have $\left(-\infty, \overleftarrow{F}\left(F\left(w\right)\right)\right] - D_F = \left(-\infty, x_k\right) \sqcup \left(\left(x_k, \overleftarrow{F}\left(F\left(w\right)\right)\right] - D_F\right).$

            Then we further reach 
            \begin{equation}
                \int_{\mathbb{R}} \mathbb{I}_{\left(x_k, \overleftarrow{F}\left(F\left(w\right)\right)\right] - D_F}\left(x\right)\phi_A\left(x\right) \mu_F\left(dx\right) = 0. \nonumber
            \end{equation}

            As before, by the first item of $5$ in Lemma~\ref{lma::2}, as $\left(x_k, x_{k + 1}\right) \subseteq D_F^\mathsf{c},$ 
            $$\mu_F\left(\left\{ w \in G : \left(\overleftarrow{F} \circ F|_G\right)\left(w\right) > w \right\} \cap \left(x_k, x_{k + 1}\right)\right) = 0,$$ 
            for which $\forall w \in \left\{ w \in G : \left(\overleftarrow{F} \circ F|_G\right)\left(w\right) = w \right\} \cap \left(x_k, x_{k + 1}\right)$ it then yields 
            \begin{equation}
                \int_{\mathbb{R}} \mathbb{I}_{\left(x_k, w\right]}\left(x\right)\phi_A\left(x\right) \mu_F\left(dx\right) = 0. \nonumber
            \end{equation}

            The argument to reach $\phi_A^{-1}\left(\left\{ 0 \right\}\right) \cap \left(x_k, x_{k + 1}\right) = \left(x_k, x_{k + 1}\right)\ \left[\mu_F\right]$ goes identical word by word as in our discussion for $k = 0$ 
            after modifying $\left(-\infty, x_1\right) = \left(x_0, x_1\right)$ towards $\left(x_k, x_{k + 1}\right),$ for which the dependencies of that proof we have shown on which the interval applied are only 
            $\left(x_k, x_{k + 1}\right) \subseteq D_F^\mathsf{c}$ along with $F\left(x_{k + 1}-\right) < F\left(x_{k +1}\right) \leq 1$ to make $\left(x_k, x_{k + 1}\right) \cap F^{-1}\left(\left\{ 1 \right\}\right) = \varnothing$ happens.
    \item[\textbf{Case (ii):}] $\mu_F\left(\left(x_k, x_{k + 1}\right)\right) = F\left(x_{k + 1}-\right) - F\left(x_k\right) = 0$\textbf{.}

        This case does not need much discussion to obtain $\phi_A^{-1}\left(\left\{ 0 \right\}\right) \cap \left(x_k, x_{k + 1}\right) = \left(x_k, x_{k + 1}\right)\ \left[\mu_F\right]$ for which sets on both sides are $\mu_F$-null.
    \end{itemize}

    Thus, we claim $\phi_A^{-1}\left(\left\{ 0 \right\}\right) \cap \left(x_k, x_{k + 1}\right) = \left(x_k, x_{k + 1}\right)\ \left[\mu_F\right],$ which further entails, 
    in conjunction with $\phi_A^{-1}\left(\left\{ 0 \right\}\right) \cap \left(-\infty, x_k\right) = \left(-\infty, x_k\right)\ \left[\mu_F\right]$ and $\phi_A\left(x_k\right) = 0,$ 
    $\phi_A^{-1}\left(\left\{ 0 \right\}\right) \cap \left(-\infty, x_{k+1}\right) = \left(-\infty, x_{k + 1}\right)\ \left[\mu_F\right]$ together with $\int_{\mathbb{R}} \mathbb{I}_{\left(-\infty, x_{k + 1}\right)}\phi_A d\mu_F = 0.$

    Then,
    \begin{itemize}
        \item[\textbf{Case (i):}] $F\left(x_{k + 1}\right) < 1$\textbf{.}

            Take $w = x_{k + 1}, z = F\left(x_{k + 1}\right) \in \left(F\left(x_{k + 1}-\right), 1\right) \subseteq \left(0, 1\right)$ (since $F\left(x_{k + 1}-\right) \geq 0$) in~(\ref{(14)}). Because 
            \begin{itemize}
                \item[(a)] $\forall n \in \Lambda_F^\star \ni n \geq k + 2 \Rightarrow F\left(x_{k + 1}\right) - \mu_F\left(\left(-\infty, x_n\right)\right) = F\left(x_{k + 1}\right) - F\left(x_{k + 2}-\right) \leq 0,$
                \item[(b)] $F\left(x_{k + 1}\right) - \mu_F\left(\left(-\infty, x_{k + 1}\right)\right) = F\left(x_{k + 1}\right) - F\left(x_{k + 1}-\right) = \mu_F\left(\left\{ x_{k + 1} \right\}\right) > 0,$
                \item[(c)] inductive assumption~(\ref{(20)}) above,
            \end{itemize}
            then
            \begin{eqnarray}
                \forall x \in \mathbb{R} &\Rightarrow& I_1\left(x, F\left(x_{k + 1}\right)\right)\phi_A\left(x\right) \nonumber \\
                                         &=& H\left(\frac{F\left(x_{k + 1}\right) - \mu_F\left(\left(-\infty, x_{k + 1}\right)\right)}{\mu_F\left(\left\{ x_{k + 1} \right\}\right)}\right)\phi_A\left(x_{k + 1}\right)\mathbb{I}_{\left\{ x_{k + 1} \right\}}\left(x\right) \nonumber \\
                                         &=& H\left(1\right)\phi_A\left(x_{k + 1}\right)\mathbb{I}_{\left\{ x_{k + 1} \right\}}\left(x\right). \nonumber
            \end{eqnarray}

            Again the remaining steps to reach $\phi_A\left(x_{k + 1}\right) = 0$ are simply by substituting $x_1, x_2$ respectively into $x_{k + 1}, x_{k + 2}$ 
            in the development of induction index $k = 0$ above, as the dependencies of those upon the pair $\left(x_k, x_{k + 1}\right)$ are again only 
            $\left(x_k, x_{k + 1}\right) \subseteq D_F^\mathsf{c}$ as well as $\int_{\mathbb{R}} \mathbb{I}_{\left(-\infty, x_{k + 1}\right)}\phi_A d\mu_F = 0.$
        \item[\textbf{Case (ii):}] $F\left(x_{k + 1}\right) = 1$ \textbf{and} $x_{k + 1} < +\infty$\textbf{.} 

            Then we can let $z = F\left(x_{k + 1}\right) = 1$ in the derivations that yield~(\ref{(13)}) 
            as before with the observation that $\left\{ \mu_F\left(\left(-\infty, X\right]\right) \leq 1 \right\} = \Omega,$ 
            eventually obtaining the counterpart of identity~(\ref{(14)}) in this case to be
            \begin{equation}
                \int_{\mathbb{R}} \left[I_1\left(x, F\left(x_{k + 1}\right)\right) + I_2\left(x\right)\right]\phi_A\left(x\right) \mu_F\left(dx\right) = 0, \label{(21)}
            \end{equation}
            for which the definitions of $I_i, i = 1, 2$ are identical to those laid out in (ii) right after~(\ref{(19)}) for induction index $n = 0.$

            The fact that $\forall x \geq x_{k + 1} \Rightarrow F\left(x\right) = 1$ by monotonicity of $F$ shall suggest through~(\ref{(11)}) that $D_F = \left\{ x_n \right\}_{n = 1}^{k + 1}.$ Additionally, with $\phi_A^{-1}\left(\left\{ 0 \right\}\right) \cap \left(-\infty, x_{k + 1}\right) = \left(-\infty, x_{k + 1}\right)\ \left[\mu_F\right]$ and $F\left(x_{k + 1}\right) = 1$ 
            we may invoke same counterpart argument for induction index $k = 0$ before by changing $x_1$ into $x_{k + 1}$ to reach $I_2\phi_A = 0\ \left[\mu_F\right].$

            Also, due to inductive assumption~(\ref{(20)}) one more time
            \begin{eqnarray}
                \forall x \in \mathbb{R} &\Rightarrow& I_1\left(x, F\left(x_{k + 1}\right)\right)\phi_A\left(x\right) \nonumber \\
                                         &=& H\left(\frac{F\left(x_{k + 1}\right) - \mu_F\left(\left(-\infty, x_{k + 1}\right)\right)}{\mu_F\left(\left\{ x_{k + 1} \right\}\right)}\right)\phi_A\left(x_{k + 1}\right)\mathbb{I}_{\left\{ x_{k + 1} \right\}}\left(x\right) \nonumber \\
                                         &=& H\left(1\right)\phi_A\left(x_{k + 1}\right)\mathbb{I}_{\left\{ x_{k + 1} \right\}}\left(x\right), \nonumber
            \end{eqnarray} so that substituting these known facts back to identity~(\ref{(21)}) 
            it yields $H\left(1\right)\mu_F\left\{ x_{k + 1} \right\}\phi_A\left(x_{k + 1}\right) = 0$ again and hence $\phi_A\left(x_{k + 1}\right) = 0.$

        \item[\textbf{Case (iii):}] $x_{k + 1} = +\infty$\textbf{.}

            This scenario is equivalent to $k + 1 \not\in \Lambda_F^\star,$ or further to $D_F = \left\{ x_n \right\}_{n = 1}^k,$ 
            for which there is nothing to induct upon.
    \end{itemize}

    We have therefore established:
    \begin{itemize}
        \item[(i)] If $\Lambda_F^\star = \varnothing,$ equivalently $x_1 = +\infty,$

            $\phi_A^{-1}\left(\left\{ 0 \right\}\right) \cap \mathbb{R} = \mathbb{R}\ \left[\mu_F\right].$ 

        \item[(ii)] If $\Lambda_F^\star \neq \varnothing,$

            $\forall n \in \Lambda_F^\star,$
            \begin{eqnarray}
                \phi_A^{-1}\left(\left\{ 0 \right\}\right) \cap \left(x_{n - 1}, x_n\right) &=& \left(x_{n - 1}, x_n\right)\ \left[\mu_F\right], \nonumber \\
                                                                                \phi_A\left(x_n\right) &=& 0, \nonumber
            \end{eqnarray} which further implies by the countability of $\Lambda_F^\star$ and~(\ref{(10)}) as well as~(\ref{(11)}) that gives $\mathbb{R} = \sqcup_{n \in \Lambda_F^\star} \left(x_{n - 1}, x_n\right) \sqcup \sqcup_{n \in \Lambda_F^\star} \left\{ x_n \right\},$
            \begin{eqnarray}
                \phi_A^{-1}\left(\left\{ 0 \right\}\right) \cap \mathbb{R} &=& \left[\sqcup_{n \in \Lambda_F^\star} \phi_A^{-1}\left(\left\{ 0 \right\}\right) \cap \left(x_{n - 1}, x_n\right)\right] \sqcup \left[\sqcup_{n \in \Lambda_F^\star} \phi_A^{-1}\left(\left\{ 0 \right\}\right) \cap \left\{ x_n \right\}\right] \nonumber \\
                                                                           &=& \left[\sqcup_{n \in \Lambda_F^\star} \left(x_{n - 1}, x_n\right)\right] \sqcup \left[\sqcup_{n \in \Lambda_F^\star} \left\{ x_n \right\}\right]\ \left[\mu_F\right] \nonumber \\
                                                                           &=& \mathbb{R}. \nonumber
            \end{eqnarray}
    \end{itemize}

    Recalling the definition of $\phi_A$ we may obtain, in either cases, for each $A \in \mathcal{F}$ and for $\mu_F$-almost all $x \in \mathbb{R}$ (whose exceptional set may depend on $A$), $P\left(\widetilde{Y} \in A\ |\ X = x\right) = \nu_{\widetilde{Y}}\left(A\right).$ 
    Then by definition of regular conditional distribution we know for each $B \in \sigma\left(\left\{ X \right\}\right),$
    \begin{eqnarray}
        P\left(\left\{ \widetilde{Y} \in A \right\} \cap \left\{ X \in B \right\}\right) &=& \int_B P\left(\widetilde{Y} \in A\ |\ X = x\right) \mu_F\left(dx\right) \nonumber \\
                                                                                         &=& P\left(\left\{ \widetilde{Y} \in A \right\}\right)P\left(\left\{ X \in B \right\}\right), \nonumber
    \end{eqnarray} implying
    $\sigma\left(\left\{ \widetilde{Y} \right\}\right) \independent \sigma\left(\left\{ X \right\}\right),$ and we are done.
\end{proof}

\begin{corollary}
    \label{cor::1} 
    Given two pairs of real-valued random variables $\left(X, Y\right)$ and $\left(U_X, U_Y\right)$ with marginal distribution functions $F_X, F_Y, H_X, H_Y : \mathbb{R} \rightarrow \left[0, 1\right]$
    such that
    \begin{itemize}
        \item[(i)]
            $\left(X, Y\right)$ and $\left(U_X, U_Y\right)$ are independent,
        \item[(ii)]
            $U_X$ and $U_Y$ are independent, and
        \item[(iii)]
        $H_X\left(0\right) = H_Y\left(0\right) = 0, H_X\left(1\right), H_Y\left(1\right) > 0.$
    \end{itemize} Additionally, denote by 
    \begin{eqnarray}
        Z_X &=& \left(1 - U_X\right)F_X\left(X-\right) + U_XF_X\left(X\right) \nonumber
    \end{eqnarray} and 
    \begin{eqnarray}
        Z_Y &=& \left(1 - U_Y\right)F_Y\left(Y-\right) + U_YF_Y\left(Y\right) \nonumber
    \end{eqnarray} the marginal Brockwell transforms of $X$ and $Y,$ respectively.
    Then the following propositions hold:
    \begin{enumerate}
        \item $X, Y$ are independent $\Rightarrow Z_X, Z_Y$ are independent.
        \item If both $F_X$ and $F_Y$ satisfy the $\leq$ well-ordering condition, then
              $$Z_X, Z_Y\ \mathrm{are\ independent}\ \Rightarrow X, Y\ \mathrm{are\ independent}.$$
    \end{enumerate}
\end{corollary}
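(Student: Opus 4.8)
The plan is to treat the two items separately: item~1 by a direct factorization of laws, and item~2 by invoking Theorem~\ref{thm::4} twice in succession, once in each coordinate.

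For item~1, observe that $Z_X$ is a Borel-measurable function of the pair $(X, U_X)$ and $Z_Y$ a Borel-measurable function of $(Y, U_Y)$, so it suffices to show $(X, U_X) \independent (Y, U_Y)$. By hypothesis~(i) the law of the quadruple $(X, Y, U_X, U_Y)$ is the product $P(X,Y)^{-1} \otimes P(U_X,U_Y)^{-1}$; splitting the first factor by $X \independent Y$ and the second by hypothesis~(ii), this law equals $(PX^{-1} \otimes PU_X^{-1}) \otimes (PY^{-1} \otimes PU_Y^{-1})$ up to the obvious permutation of coordinates, which is exactly the assertion $(X, U_X) \independent (Y, U_Y)$. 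Pushing this forward through the two Brockwell maps gives $Z_X \independent Z_Y$.

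For item~2, note first that $Z_X$ and $Z_Y$, being real-valued, take values in the Borel space $(\mathbb{R}, \sigma(\tau_\rho))$, so each may serve either as the Brockwell transform or as the Borel-space-valued random map $\widetilde{Y}$ in Theorem~\ref{thm::4}. The first application uses $X$ as the underlying variable, $U_X$ as the auxiliary variable (so $F$ is $F_X$, $H$ is $H_X$, and $H_X(0)=0<H_X(1)$ by hypothesis~(iii)), $F_X$ well-ordered by $\leq$ by assumption, $X \independent U_X$ by hypothesis~(i), and $\widetilde{Y} = Z_Y$; granting the conditional independence $U_X \independent Z_Y \mid X$ (addressed below), Theorem~\ref{thm::4} yields $Z_X \independent Z_Y \Rightarrow X \independent Z_Y$. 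The second application swaps the roles: $Y$ is the underlying variable, $U_Y$ the auxiliary variable with $F_Y$ (again well-ordered) as its distribution function, and $\widetilde{Y} = X$; it yields $Z_Y \independent X \Rightarrow Y \independent X$. Chaining the two implications, starting from the hypothesis $Z_X \independent Z_Y$, produces $X \independent Y$, which is item~2.

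The step I expect to be the main (though routine) obstacle is verifying the conditional-independence hypotheses $U_X \independent Z_Y \mid X$ and its mirror $U_Y \independent X \mid Y$. The key observation is that, conditionally on $X = x$, the triple $(Y, U_X, U_Y)$ is mutually independent: the factorization $P(X,Y)^{-1} \otimes P(U_X,U_Y)^{-1}$ of hypothesis~(i) gives $Y \independent (U_X, U_Y) \mid X$, while hypothesis~(ii) gives $U_X \independent U_Y$, and this persists conditionally on $X$ because $(U_X, U_Y)$ is independent of $X$ altogether; combining these two facts yields mutual conditional independence of $Y$, $U_X$, $U_Y$ given $X$. Since $Z_Y$ is a measurable function of $(Y, U_Y)$, it follows that $U_X \independent (Y, U_Y) \mid X$ and a fortiori $U_X \independent Z_Y \mid X$; the mirror statement follows by interchanging the subscripts. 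Everything else—measurability of the Brockwell maps into $(\mathbb{R}, \sigma(\tau_\rho))$, existence of the relevant regular conditional distributions on this Borel space, and the product-measure bookkeeping—is standard.
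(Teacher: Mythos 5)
Your proof is correct and follows essentially the same path as the paper: item~1 by establishing $(X, U_X) \independent (Y, U_Y)$ and pushing forward through the Brockwell maps, and item~2 by two successive applications of Theorem~\ref{thm::4}, first with $\widetilde{Y} = Z_Y$ to get $X \independent Z_Y$, then with the roles swapped and $\widetilde{Y} = X$ to conclude $X \independent Y$. Your verification of the conditional-independence hypotheses ($U_X \independent Z_Y \mid X$ via mutual conditional independence of $(Y, U_X, U_Y)$ given $X$, and its mirror) is a clean and slightly more explicit rendering of what the paper compresses into a chain of distributional equalities.
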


\begin{proof} 
    For necessity let 
    \begin{align}
        f_F:\mathbb{R}\times\mathbb{R} &\rightarrow \mathbb{R} \nonumber \\ 
                                (x, u) &\mapsto (1-u)F(x-)+uF(x), \nonumber
    \end{align} which is measurable as a composition of measurable components $(x, u)\mapsto (F(x-), F(x), u)$ (due to the monotonicity of $F$ and $x\mapsto F(x-)$), $(y, z, w)\mapsto (1-w)y+wz$.

    For any $\left(u_x, x\right), \left(u_y, y\right) \in \mathbb{R}^2,$
    \begin{eqnarray}
        && P\left(\left\{ \left(U_X, X\right) \leq \left(u_x, x\right) \right\} \cap \left\{ \left(U_Y, Y\right) \leq \left(u_y, y\right) \right\}\right) \nonumber \\ 
        &=& P\left(\left\{ \left(U_X, U_Y\right) \leq \left(u_x, u_y\right) \right\} \cap \left\{ \left(X, Y\right) \leq \left(x, y\right) \right\}\right) \nonumber \\
        &=& P\left(\left\{ \left(U_X, U_Y\right) \leq \left(u_x, u_y\right) \right\}\right)P\left(\left\{ \left(X, Y\right) \leq \left(x, y\right) \right\}\right) \label{(7)} \\
        &=& P\left(\left\{ U_X \leq u_x \right\}\right)P\left(\left\{ U_Y \leq u_y \right\}\right)P\left(\left\{ X \leq x \right\}\right)P\left(\left\{ Y \leq y \right\}\right) \label{(8)} \\
        &=& P\left(\left\{ \left(U_X, X\right) \leq \left(u_x, x\right) \right\}\right)P\left(\left\{ \left(U_Y, Y\right) \leq \left(u_y, y\right) \right\}\right), \label{(9)}
    \end{eqnarray} where
    \begin{itemize}
        \item[(i)] in equality~(\ref{(7)}) and~(\ref{(9)}) we utilize $(U_X, U_Y)\independent (X, Y)$ along with the independence arising from any pair of coordinate projections,
        \item[(ii)] equality~(\ref{(8)}) is due to $U_X\independent U_Y$ and $X\independent Y$,
    \end{itemize} so that we arrive at $(U_X, X)\independent (U_Y, Y)$. Applying $f_{F_X}$ and $f_{F_Y}$ to both sides yield the desired conclusion.

    Towards sufficiency, notice $Z_Y$ serves as a valid candidate for $\widetilde{Y}$ in Theorem~\ref{thm::4}:
    \begin{itemize}
        \item[(i)] $(\mathbb{R}, d)$ is Polish, henceforth $(\mathbb{R}, \sigma\left(\tau_d\right))$ is a Borel space.
        \item[(ii)] $Z_X\independent Z_Y$ is the starting condition with $H_X(0)=0$, $H_X(1)>0$ satisfied by the cdf of $U_X$.
        \item[(iii)] 
            The condition $\left(U_X, U_Y\right)\independent \left(X, Y\right)$ along with $U_X\independent U_Y$ yields $U_X\independent\left(U_Y, X, Y\right)$ since 
            \begin{eqnarray}
                \left(U_X, \left(U_Y, X, Y\right)\right)&\sim& \left(U_X, U_Y\right)\times \left(X, Y\right) \nonumber \\
                                                        &\sim& U_X\times U_Y \times \left(X, Y\right) \nonumber \\
                                                        &\sim& U_X \times \left(U_Y, \left(X, Y\right)\right), \nonumber
            \end{eqnarray} which implies $\left(U_X, \left(U_Y, Y\right)\right)|X$ is independent for which $U_X|X\sim U_X$, and we further apply $f_{F_Y}$ to the second coordinate to obtain 
            $\left(U_X, Z_Y\right)|X\sim U_X \times \left(Z_Y|X\right)$.
    \end{itemize}

    Thus, $X\independent Z_Y$ is established. Now interchanging the role of $X$ and $Y$ notation-wisely in the above conclusion, we also have 
    for any Borel space valued random map $\widetilde{X}:\Omega\rightarrow M$ such that $Z_Y\independent \widetilde{X}$ and $(U_Y, \widetilde{X})|X\sim U_Y|X \times \widetilde{X}|X\sim U_Y\times \widetilde{X}|X$ 
    as $U_Y\independent X$, with $H_Y(0)=0$, $H_Y(1)>0$ then $\widetilde{X}\independent Y$. This time $X$ turns out to be the choice for $\widetilde{X}$, since $X|X$ is simply degenerate, 
    and then we conclude the proof towards $X\independent Y$.
\end{proof}

\begin{corollary}
    \label{cor::2} 
    Given two independent triples of real-valued random variables $\left(X_1, X_2, X_3\right)$ and $\left(U_1, U_2, U_3\right)$ 
    such that $\left(U_1, U_2, U_3\right)$ is uniformly distributed in the unit cube $\left(0, 1\right)^3.$ 
    In addition, let the Brockwell transforms of $X_1$ given $X_3,$ $X_2$ given $X_3$ and $X_3$ be 
    \begin{eqnarray}
        Z_{1 | 3} &=& \left(1 - U_1\right)F_{1 | 3}\left(X_1- | X_3\right) + U_1F_{1 | 3}\left(X_1 | X_3\right), \nonumber \\
        Z_{2 | 3} &=& \left(1 - U_2\right)F_{2 | 3}\left(X_2- | X_3\right) + U_2F_{2 | 3}\left(X_2 | X_3\right) \nonumber
    \end{eqnarray} and
    \begin{eqnarray}
        Z_3 &=& \left(1 - U_3\right)F_3\left(X_3-\right) + U_3F_3\left(X_3\right), \nonumber
    \end{eqnarray} respectively. Also assume the Borel measurability of 
    \begin{eqnarray}
        \left(x_1, x_3\right) &\mapsto& F_{1 | 3}\left(x_1 | x_3\right), \nonumber \\
        \left(x_1, x_3\right) &\mapsto& F_{1 | 3}\left(x_1- | x_3\right), \nonumber \\
        \left(x_2, x_3\right) &\mapsto& F_{2 | 3}\left(x_2 | x_3\right), \nonumber \\
        \left(x_2, x_3\right) &\mapsto& F_{2 | 3}\left(x_2- | x_3\right). \nonumber
    \end{eqnarray}
    \begin{enumerate}
        \item 
            Then, $X_1$ and $X_2$ are conditionally independent given $X_3 \Rightarrow$ the set of transformed random variables $\left\{ Z_{1 | 3}, Z_{2 | 3}, Z_3 \right\}$ is independent.
        \item
            Conversely, if, for $PX^{-1}_3$-almost all $x \in \mathbb{R},$ all the distribution functions 
            $F_{1 | 3}\left(\cdot | x\right), F_{2 | 3}\left(\cdot | x\right)$ and $F_3$ satisfy the $\leq$ well-ordering condition,
            then $\left\{ Z_{1 | 3}, Z_{2 | 3}, Z_3 \right\}$ is independent $\Rightarrow X_1$ and $X_2$ are conditionally independent given $X_3.$
    \end{enumerate}
\end{corollary}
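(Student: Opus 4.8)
The plan is to prove the two implications separately, using item~$9$(i) of Lemma~\ref{lma::2} (the probability integral transform) for necessity and Theorem~\ref{thm::4}, invoked three times, for sufficiency. In both directions the pivotal device is the regular conditional probability $P\left(\cdot\mid X_3=x_3\right)$, which exists for $PX_3^{-1}$-almost every $x_3$ because all the sample spaces in play are Borel; under it $X_1,X_2$ carry the conditional distribution functions $F_{1|3}\left(\cdot\mid x_3\right),F_{2|3}\left(\cdot\mid x_3\right)$, while $\left(U_1,U_2,U_3\right)$ is still uniform on $\left(0,1\right)^3$ and independent of $\left(X_1,X_2\right)$, since $\left(U_1,U_2,U_3\right)\independent\left(X_1,X_2,X_3\right)$.

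For necessity I would work under $P\left(\cdot\mid X_3=x_3\right)$: there $X_1,X_2,U_1,U_2,U_3$ are mutually independent ($X_1\independent X_2$ being the hypothesis and the rest coming from the product structure just noted together with mutual independence of the $U_i$). Since $Z_{1|3}=\left(1-U_1\right)F_{1|3}\left(X_1-\mid x_3\right)+U_1F_{1|3}\left(X_1\mid x_3\right)$ is $\sigma\left(X_1,U_1\right)$-measurable and, by item~$9$(i) of Lemma~\ref{lma::2}, uniform on $\left(0,1\right)$, likewise $Z_{2|3}$ is $\sigma\left(X_2,U_2\right)$-measurable and uniform, and $Z_3$ is $\sigma\left(U_3\right)$-measurable, the triple $\left\{Z_{1|3},Z_{2|3},Z_3\right\}$ is mutually independent under $P\left(\cdot\mid X_3=x_3\right)$ with the conditional laws of $Z_{1|3}$ and $Z_{2|3}$ given $X_3$ both equal to the \emph{fixed} uniform law. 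Taking expectations over $X_3$ then gives $P\left(Z_{1|3}\in A_1,Z_{2|3}\in A_2,Z_3\in A_3\right)=\lambda\left(A_1\right)\lambda\left(A_2\right)P\left(Z_3\in A_3\right)$ for all Borel $A_i$; specializing each $A_i$ to $\mathbb{R}$ recovers the marginals and hence the mutual independence of $\left\{Z_{1|3},Z_{2|3},Z_3\right\}$.

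For sufficiency I would first apply Theorem~\ref{thm::4} with $X=X_3$, $U=U_3$ (uniform, so $H\left(0\right)=0$, $H\left(1\right)=1>0$), and the \emph{pair} $\widetilde Y=\left(Z_{1|3},Z_{2|3}\right)$, which takes values in the Borel space $\mathbb{R}^2$: the requirements $U_3\independent X_3$ and $U_3\independent\left(Z_{1|3},Z_{2|3}\right)\mid X_3$ hold because, conditionally on $X_3$, $\left(Z_{1|3},Z_{2|3}\right)$ is a function of $\left(X_1,U_1,X_2,U_2\right)$ and $U_3$ is independent of that block, while $Z_3\independent\left(Z_{1|3},Z_{2|3}\right)$ is part of the hypothesis; as $F_3$ is $\leq$ well-ordered, Theorem~\ref{thm::4} yields $X_3\independent\left(Z_{1|3},Z_{2|3}\right)$. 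Coupling this with $Z_{1|3}\independent Z_{2|3}$ (from the hypothesis) and with item~$9$(i) of Lemma~\ref{lma::2} (which identifies the conditional marginals of $Z_{1|3},Z_{2|3}$ given $X_3$ as uniform), the conditional law of $\left(Z_{1|3},Z_{2|3}\right)$ given $X_3=x_3$ is, for a.e.\ $x_3$, a product of uniforms, i.e.\ $Z_{1|3}\independent Z_{2|3}\mid X_3$. Finally I would relativize Theorem~\ref{thm::4} to $P\left(\cdot\mid X_3=x_3\right)$ twice: once with $X=X_1$ (distribution function $F_{1|3}\left(\cdot\mid x_3\right)$, $\leq$ well-ordered for a.e.\ $x_3$), $U=U_1$, $\widetilde Y=Z_{2|3}$, and hypothesis $Z_{1|3}\independent Z_{2|3}$, obtaining $X_1\independent Z_{2|3}\mid X_3$; then, interchanging the two coordinates, with $X=X_2$, $U=U_2$, $\widetilde Y=X_1$, and hypothesis $Z_{2|3}\independent X_1$ (just obtained), obtaining $X_2\independent X_1$ under $P\left(\cdot\mid X_3=x_3\right)$ for a.e.\ $x_3$, which is precisely $X_1\independent X_2\mid X_3$.

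The part I expect to be hardest is the relativization: one must fix a jointly measurable version of $x_3\mapsto P\left(\cdot\mid X_3=x_3\right)$, verify that \emph{every} hypothesis of Theorem~\ref{thm::4} --- the well-ordering conditions, the uniformity of $U_1,U_2$ and their independence of $X_1,X_2$, the conditional independences $U_i\independent\widetilde Y\mid X_i$, and the independence of the pertinent pair of Brockwell transforms --- holds simultaneously off a single $PX_3^{-1}$-null set, and then glue the almost-everywhere conclusions back into genuine conditional-independence statements. A secondary subtlety is the bookkeeping of the lone hypothesis that $\left\{Z_{1|3},Z_{2|3},Z_3\right\}$ is independent: the marginal relation $Z_{1|3}\independent Z_{2|3}$ upgrades to the conditional relation $Z_{1|3}\independent Z_{2|3}\mid X_3$ only after the auxiliary fact $X_3\independent\left(Z_{1|3},Z_{2|3}\right)$ has been secured via the pair-valued use of Theorem~\ref{thm::4}.
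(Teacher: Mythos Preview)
Your proposal is correct and follows essentially the same route as the paper. Both proofs use item~9(i) of Lemma~\ref{lma::2} to identify the conditional laws of $Z_{1|3}$ and $Z_{2|3}$ given $X_3$ as uniform, apply Theorem~\ref{thm::4} once with the pair-valued $\widetilde Y=\left(Z_{1|3},Z_{2|3}\right)$ to pass from $Z_3\independent\left(Z_{1|3},Z_{2|3}\right)$ to $X_3\independent\left(Z_{1|3},Z_{2|3}\right)$, and then invoke Theorem~\ref{thm::4} conditionally on $X_3=x_3$ (twice, in the pattern of Corollary~\ref{cor::1}) to convert $Z_{1|3}\independent Z_{2|3}\mid X_3$ into $X_1\independent X_2\mid X_3$; the only difference is organizational---the paper phrases the intermediate target as ``$\left\{Z_{1|3},Z_{2|3},X_3\right\}$ is independent'' and verifies $U_3\independent\left(Z_{1|3},Z_{2|3},X_3\right)$ by first establishing $U_3\independent\left(X_1,U_1,X_2,U_2,X_3\right)$ and then pushing through the measurable map $\left(x_1,u_1,x_2,u_2,x_3\right)\mapsto\left(f_{F_{1|3}\left(\cdot\mid x_3\right)}\left(x_1,u_1\right),f_{F_{2|3}\left(\cdot\mid x_3\right)}\left(x_2,u_2\right),x_3\right)$, whereas you argue the same conditional independence directly under $P\left(\cdot\mid X_3=x_3\right)$.
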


\begin{proof}
    Notice if we apply (i) in the last item of Lemma~\ref{lma::2} to each fixed $x \in \mathbb{R},$ outside a $PX^{-1}_3$-null set if necessary, of the conditional distribution $X_1 | X_3 = x \sim F_{1 | 3}\left(\cdot | x\right)$ with $U_1 \independent \left(X_1 | X_3 = x\right)$ 
    then we obtain $Z_{1 | 3} | \left(X_3 = x\right) \sim \mathrm{uniform}\left(0, 1\right),$ which implies that $Z_{1 | 3} \independent X_3.$ Identical reasoning substituting $X_1$ as $X_2$ yields 
    $Z_{2 | 3} \independent X_3.$

    With the above observation we see that $\left\{ Z_{1 | 3}, Z_{2 | 3}, X_3 \right\}$ is independent if and only if $\left(Z_{1 | 3}, Z_{2 | 3}\right) | X_3$ is independent, i.e.,
    $\left(Z_{1 | 3}, Z_{2 | 3}\right) | X_3 \sim \left(Z_{1 | 3} | X_3\right) \times \left(Z_{2 | 3} | X_3\right) \sim Z_{1 | 3} \times Z_{2 | 3}.$

    Similarly, by virtue of Theorem~\ref{thm::4} conditionally to each $x \in \mathbb{R}$ that does not belong to the assumed $PX^{-1}_3$-null set, 
    that both $F_{1 | 3}\left(\cdot | x\right)$ and $F_{2 | 3}\left(\cdot | x\right)$ satisfy the $\leq$ well-ordering condition shall entail 
    $\left(Z_{1 | 3}, Z_{2 | 3}\right) | \left(X_3 = x\right)$ is independent implies $\left(X_1, X_2\right) | \left(X_3 = x\right)$ is independent 
    while this converse holds regardless.

    It remains for us to show that $\left\{ Z_{1 | 3}, Z_{2 | 3}, X_3 \right\}$ is independent entails $\left\{ Z_{1 | 3}, Z_{2 | 3}, Z_3 \right\}$ is independent,
    and its sufficiency holds under the $\leq$ well-ordering condition on $F_3.$

    First of all, $U_3 \independent \left(X_1, U_1, X_2, U_2, X_3\right)$ as for each $u_3 \in \mathbb{R}, \left(x_1, u_1, x_2, u_2, x_3\right) \in \mathbb{R}^5,$
    \begin{eqnarray}
        && P\left(\left\{ U_3 \leq u_3 \right\} \cap \left\{ \left(X_1, U_1, X_2, U_2, X_3\right) \leq \left(x_1, u_1, x_2, u_2, x_3\right) \right\}\right) \nonumber \\
        &=& P\left(\left\{ \left(U_1, U_2, U_3\right) \leq \left(u_1, u_2, u_3\right) \right\}\right)P\left(\left\{ \left(X_1, X_2, X_3\right) \leq \left(x_1, x_2, x_3\right) \right\}\right) \label{(22)} \\
        &=& P\left(\left\{ U_3 \leq u_3 \right\}\right)P\left(\left\{ \left(U_1, U_2\right) \leq \left(u_1, u_2\right) \right\}\right)P\left(\left\{ \left(X_1, X_2, X_3\right) \leq \left(x_1, x_2, x_3\right) \right\}\right) \nonumber \\ \label{(23)} \\
        &=& P\left(\left\{ U_3 \leq u_3 \right\}\right)P\left(\left\{ \left(X_1, U_1, X_2, U_2, X_3\right) \leq \left(x_1, u_1, x_2, u_2, x_3\right) \right\}\right), \label{(24)}
    \end{eqnarray} in which 
    \begin{itemize}
        \item[(i)] 
            in equality~(\ref{(22)}) and~(\ref{(24)}) we use the condition $\left(U_1, U_2, U_3\right) \independent \left(X_1, X_2, X_3\right)$ 
            along with each coordinate projection that can be applied to any side of the independence equation,
        \item[(ii)]
            equality~(\ref{(23)}) is for $\left(U_1, U_2, U_3\right) \sim \mathrm{uniform}\left(0, 1\right)^3$ so that 
            $\left\{ U_1, U_2, U_3 \right\}$ is also independent and hence $U_3 \independent \left(U_1, U_2\right).$
    \end{itemize}

    Notice that when $\left(x_1, x_3\right) \mapsto F\left(x_1 | x_3\right)$ and $\left(x_1, x_3\right) \mapsto F\left(x_1- | x_3\right)$ are measurable, we obtain the measurability of 
    $\left(x_1, u, x_3\right) \mapsto \left(F\left(x_1- | x_3\right), F\left(x_1 | x_3\right), u\right)$ checking through coordinate function by coordinate function, 
    and then composing it with the continuous $\left(y, z, u\right) \mapsto \left(1 - u\right)y + uz$ 
    we obtain the measurability of $\left(x_1, u, x_3\right) \mapsto f_{F\left(\cdot | x_3\right)}\left(x_1, u\right)$ 
    with $f_F$ defined at the onset of the proof for Corollary~\ref{cor::1}.

    Therefore, due to the Borel measurability assumption $\left(x_1, u_1, x_2, u_2, x_3\right) \mapsto \left(f_{F_{1 | 3}\left(\cdot | x_3\right)}\left(x_1, u_1\right), f_{F_{2 | 3}\left(\cdot | x_3\right)}\left(x_2, u_2\right), x_3\right)$
    is measurable again as each such coordinate function is measurable. 
    Then we apply this measurable vector-valued function to the right of $U_3 \independent \left(X_1, U_1, X_2, U_2, X_3\right)$
    to claim $U_3 \independent \left(Z_{1 | 3}, Z_{2 | 3}, X_3\right).$ 

    \begin{itemize}
        \item[``$\Rightarrow$''] Necessity.
            Assume $\left\{ Z_{1 | 3}, Z_{2 | 3}, X_3 \right\}$ is independent. We have 
            \begin{eqnarray}
                \left(\left(Z_{1 | 3}, Z_{2 | 3}\right), \left(X_3, U_3\right)\right) &\sim& \left(Z_{1 | 3}, Z_{2 | 3}, X_3\right) \times U_3 \nonumber \\
                                                                                      &\sim& \left(Z_{1 | 3}, Z_{2 | 3}\right) \times X_3 \times U_3 \nonumber \\
                                                                                      &\sim& \left(Z_{1 | 3}, Z_{2 | 3}\right) \times \left(X_3, U_3\right) \nonumber \\
                                                                                      &\sim& \left(Z_{1 | 3} \times Z_{2 | 3}\right) \times \left(X_3, U_3\right), \nonumber
            \end{eqnarray} where in the first and third distributional equality we have utilized $\left(Z_{1 | 3}, Z_{2 | 3}, X_3\right) \independent U_3$ 
            while the second and fourth are owing to the condition that $\left\{ Z_{1 | 3}, Z_{2 | 3}, X_3 \right\}$ is independent.

            The above distributional equation allows us to apply $f_{F_3}$ to the second component of it to obtain 
            $\left(Z_{1 | 3}, Z_{2 | 3}, Z_3\right) \sim \left(\left(Z_{1 | 3}, Z_{2 | 3}\right), Z_3\right) \sim Z_{1 | 3} \times Z_{2 | 3} \times Z_3,$ 
            which means that $\left\{ Z_{1 | 3}, Z_{2 | 3}, Z_3\right\}$ is independent.
        \item[``$\Leftarrow$''] Sufficiency.
            Let $\left\{ Z_{1 | 3}, Z_{2 | 3}, Z_3 \right\}$ be independent. 
            Then we immediately know $\left(Z_{1 | 3}, Z_{2 | 3}\right) \independent Z_3.$
            On the other hand by $U_3 \independent \left(Z_{1 | 3}, Z_{2 | 3}, X_3\right)$ we also know 
            $\left(\left(Z_{1 | 3}, Z_{2 | 3}\right), U_3\right) | X_3$ is independent. 
            Next, the marginal distribution function for each of the $U_1, U_2, U_3$'s clearly has $0$ as its fixed point and is positive at $1.$
            Last but not least $\left(\mathbb{R}^2, \rho_2\right)$ is Polish, 
            implying that $\left(\mathbb{R}^2, \sigma\left(\tau_{\rho_2}\right)\right)$ is a Borel space.

            The four conditions stated sequentially above with the assumption that $F_3$ satisfies the $\leq$ well-ordering condition allows us to resort to Theorem~\ref{thm::4} 
            with $\left(Z_{1 | 3}, Z_{2 | 3}\right)$ being $\widetilde{Y}$ there to get $\left(Z_{1 | 3}, Z_{2 | 3}\right) \independent X_3.$
            In the presence of $\left\{ Z_{1 | 3}, Z_{2 | 3}, Z_3 \right\}$ being independent, we also know $Z_{1 | 3} \independent Z_{2 | 3}$ and hence 
            these two claims together conclude the independence of $\left\{ Z_{1 | 3}, Z_{2 | 3}, X_3 \right\}.$
    \end{itemize}
\end{proof}

\begin{remark}
    \label{rmk::6} 
    Note that Corollary~\ref{cor::2} is stronger than Corollary~\ref{cor::1} in the case where 
    $\left(U_1, U_2\right)$ is uniformly distributed in the unit square $\left(0, 1\right)^2$ by considering the following reasoning.
\end{remark}

\begin{proof} 
    Let $X_3 = 0$ and another $U_{X_3} \sim \mathrm{uniform}\left(0, 1\right)$ such that $U_{X_3} \independent \left(U_X, U_Y, X, Y\right),$ 
    which implies
    \begin{itemize}
        \item[(i)] $U_{X_3} \independent \left(U_X, U_Y\right)$ so that $\left(U_X, U_Y, U_{X_3}\right) \sim \left(U_X, U_Y\right) \times U_{X_3} \sim \mathrm{uniform}\left(0, 1\right)^3.$ 
        \item[(ii)]
            \begin{eqnarray}
                \left(\left(U_{X_3}, U_X, U_Y\right), \left(X, Y\right)\right) &\sim& U_{X_3} \times \left(\left(U_X, U_Y\right), \left(X, Y\right)\right) \nonumber \\
                                                                               &\sim& U_{X_3} \times \left(U_X, U_Y\right) \times \left(X, Y\right) \nonumber \\
                                                                               &\sim& \left(U_{X_3}, U_X, U_Y\right) \times \left(X, Y\right), \nonumber
            \end{eqnarray} where the first and the third equality is for $U_{X_3} \independent \left(U_X, U_Y, X, Y\right)$ 
            whereas the second by the condition $\left(U_X, U_Y\right) \independent \left(X, Y\right).$

            Thus, we know $\left(U_{X_3}, U_X, U_Y\right) \independent \left(X, Y\right),$ or equivalently $\left(U_X, U_Y, U_{X_3}\right) \independent \left(X, Y\right).$ 
            On the other hand since $\sigma\left(\left\{ X_3 \right\}\right) = \left\{ \varnothing, \Omega \right\},$ we know $\sigma\left(\left\{ X, Y, X_3 \right\}\right) = \sigma\left(\left\{ X, Y \right\}\right),$ 
            and consequently $\left(U_X, U_Y, U_{X_3}\right) \independent \left(X, Y, X_3\right).$
        \item[(iii)]
            $\forall x \in \mathbb{R}, F_{X_3}\left(x\right) = \mathbb{I}_{\left[0, +\infty\right)}\left(x\right), F_{X_3}\left(x-\right) = \mathbb{I}_{\left(0, +\infty\right)}\left(w\right)$ so that $Z_{X_3} = U_{X_3}$ 
            and $D_{F_{X_3}} = \left\{ 0 \right\}$ which is finite and hence $F_{X_3}$ fulfills the $\leq$ well-ordering condition.
        \item[(iv)]
            In this scenario as $\sigma\left(\left\{ X_3 \right\}\right) = \left\{ \varnothing, \Omega \right\}$ clearly $\left(X, Y\right) \independent X_3$ and hence $X \independent X_3.$ 
            This observation leads to the conclusion that $\forall \left(x, x_3\right) \in \mathbb{R}^2 \Rightarrow F_{X | X_3}\left(x | x_3\right) = F_X\left(x\right), F_{X | X_3}\left(x- | x_3\right) = F_X\left(x-\right),$
            (For each $x_3 \in \mathbb{R}^\star,$ values of $F_{X | X_3}\left(x | x_3\right)$ and $F_{X | X_3}\left(x- | x_3\right)$ may be arbitrary.) 
            which further yields the measurability of both $\left(x, x_3\right) \mapsto F_{X | X_3}\left(x | x_3\right) = F_X\left(x\right)$
            and $\left(x, x_3\right) \mapsto F_{X | X_3}\left(x- | x_3\right) = F_X\left(x-\right)$ due to the monotonicity of $F$ and $x \mapsto F\left(x-\right).$

            Identical discussion applied to $Y \independent X_3$ also gives the measurability of $\left(y, x_3\right) \mapsto F_{Y | X_3}\left(y | x_3\right) = F_Y\left(y\right)$
            as well as $\left(y, x_3\right) \mapsto F_{Y | X_3}\left(y- | x_3\right) = F_Y\left(y-\right).$

            Note that we also get $Z_{X | X_3} = Z_X$ and $Z_{Y | X_3} = Z_Y.$
        \item[(v)]
            In the presence of $\left(U_X, U_Y, U_{X_3}\right) \sim \mathrm{uniform}\left(0, 1\right)^3,$ 
            $\left(U_X, U_Y, U_{X_3}\right) \independent \left(X, Y, X_3\right)$ along with the four measurability assumptions 
            we actually obtain, done in the proof of Corollary~\ref{cor::2}, 
            $U_{X_3} \independent \left(Z_{X | X_3}, Z_{Y | X_3}, X_3\right).$

            Due to $Z_{X_3} = U_{X_3}, Z_{X | X_3} = Z_X, Z_{Y | X_3} = Z_Y$ and $\sigma\left(\left\{ X_3 \right\}\right) = \left\{ \varnothing, \Omega \right\},$ we already have $Z_{X_3} \independent \left(Z_X, Z_Y\right).$
    \end{itemize}

    After observing (i)-(v) listed above, we appeal to Corollary~\ref{cor::2} to reach the conclusion 
    $\left(X, Y\right) \sim \left(X, Y\right) | X_3$ is independent implies $\left\{ Z_X, Z_Y, Z_{X_3} \right\}$ is independent
    whereas the converse direction can be guaranteed by the assumption that $F_X = F_{X | X_3}\left(\cdot | 0\right)$ and $F_Y = F_{Y | X_3}\left(\cdot | 0\right)$ satisfy the $\leq$ well-ordering condition.
    But in the presence of $Z_{X_3} \independent \left(Z_X, Z_Y\right), \left\{ Z_X, Z_Y, Z_{X_3} \right\}$ is independent
    is equivalent to $Z_X \independent Z_Y.$
\end{proof}

\begin{appendices}

\section{Proof of the Completeness of $\left\lVert \cdot \right\lVert$} \label{pfa}

\begin{proof}
    For the completeness of $\lVert \cdot \rVert$, given $\left\{\kappa_n\right\}_{n=1}^\infty\subseteq \mathcal{K}$ Cauchy in $\lVert \cdot \rVert$, since 
    \begin{eqnarray}
        \forall (x, A) \in \mathbb{R}\times \mathcal{F},  |\kappa_n-\kappa_m|(x, A)&\leq& |\kappa_n-\kappa_m|(x, M)\leq \lVert \kappa_n-\kappa_m \rVert, \label{(40)} \\ 
        |\kappa_n-\kappa_m|(x, A)&=&(\kappa_n^++\kappa_m^-)(x, A)+(\kappa_n^-+\kappa_m^+)(x, A), \nonumber \\\label{(41)}
    \end{eqnarray} we know each $\left\{ \kappa_n(x, A)\right\}_{n=1}^\infty$ is Cauchy in $\mathbb{R}$ so that one could define $\kappa(x, A)=\lim_{n\rightarrow \infty}\kappa_n(x, A)\in \mathbb{R}$ in a point-wise manner.

    Then for each $A\in \mathcal{F}$, the $\sigma\left(\tau_\rho\right)$-measurability of each term in $\left\{ \kappa_n(\cdot, A)\right\}_{n=1}^\infty$ shall imply that of $\kappa(\cdot, A)$. 
    Additionally, given $x\in \mathbb{R}$, for each disjoint sequence $\left\{A_m\right\}_{m=1}^\infty$ such that $|\kappa\left(x, \sqcup_{m=1}^\infty A_m\right)|<+\infty$, we have for sufficiently large $n\in \mathbb{N}^\star$, 
    $|\kappa_n\left(x, \sqcup_{m=1}^\infty A_m\right)|<+\infty$ and thus by countable additivity for each $s\in\left\{-, +\right\}$
    $$\kappa_n^s\left(x, \sqcup_{m=1}^\infty A_m\right)=\sum_{m=1}^\infty \kappa_n^s\left(x, A_m\right).$$  
    Letting $n\rightarrow +\infty$ in the above, monotone convergence theorem further implies $\lim_{n\rightarrow \infty}\kappa_n^s\left(x, \sqcup_{m=1}^\infty A_m\right)=\sum_{m=1}^\infty \lim_{n\rightarrow \infty} \kappa_n^s\left(x, A_m\right).$

    As for each $A\in \mathcal{F},$ $\left\{\kappa_n^+\left(x, A\right)\right\}_{n=1}^\infty$, $\left\{\kappa_n^-\left(x, A\right)\right\}_{n=1}^\infty$ are Cauchy in $\mathbb{R}$,
    \begin{eqnarray}
        \kappa\left(x, \sqcup_{m=1}^\infty A_m\right)&=&\lim_{n\rightarrow \infty}\kappa_n\left(x, \sqcup_{m=1}^\infty A_m\right) \nonumber \\
                                             &=&\lim_{n\rightarrow \infty} \kappa_n^+\left(x, \sqcup_{m=1}^\infty A_m\right)-\lim_{n\rightarrow \infty} \kappa_n^-\left(x, \sqcup_{m=1}^\infty A_m\right) \nonumber \\
                                             &=&\sum_{m=1}^\infty \left(\kappa_n^+\left(x, A_m\right)-\kappa_n^-\left(x, A_m\right)\right) \nonumber \\
                                             &=&\sum_{m=1}^\infty \kappa(x, A_m), \nonumber
    \end{eqnarray} so that $\kappa(x, \cdot)$ is a finite signed measure on $\mathcal{F}$.

    Finally, take $A=M$ in~(\ref{(41)}) above and let $m\rightarrow \infty$ for each fixed $n\in \mathbb{N}^\star$, combining~(\ref{(40)}) along with $\kappa^s(x, A)=\lim_{n\rightarrow \infty}\kappa_n^s(x, A)$ for $s\in\left\{-, +\right\}$ we know for each $x\in \mathbb{R}$, 
    $|\kappa_n-\kappa|(x, M)\leq \overline{\lim}_{m\rightarrow \infty}\lVert \kappa_n-\kappa_m \rVert$, which implies that $\lVert \kappa_n - \kappa\rVert \leq \overline{\lim}_{m\rightarrow \infty}\lVert \kappa_n-\kappa_m \rVert\rightarrow 0$ as $n\rightarrow \infty$. 
    The finiteness of $\lVert \kappa \rVert$ follows from the triangle inequality of $\lVert\cdot\rVert$: $$\lVert \kappa \rVert \leq \lVert \kappa -\kappa_{n_0}\rVert + \lVert \kappa_{n_0} - \kappa_{m_0}\rVert + \lVert \kappa_{m_0} \rVert<+\infty,$$ 
    for some fixed $n_0, m_0\in \mathbb{N}^\star$ that is sufficiently large.
\end{proof}

\section{Proof of the Boundedness of $T_{\mu_F}$} \label{pfb}

\begin{proof}
    \begin{eqnarray}
        \lVert T_{\mu_F} \kappa \rVert &=& \sup_{x\in \mathbb{R}}\left(\left(T_{\mu_F}\kappa\right)^++\left(T_{\mu_F}\kappa\right)^-\right)(x, M) \nonumber \\ 
                                       &=& \sup_{x\in \mathbb{R}}\left(T_{\mu_F}\kappa^++T_{\mu_F}\kappa^-\right)(x, M) \nonumber \\
                                       &=& \sup_{x\in \mathbb{R}}\left(T_{\mu_F}|\kappa|\right)(x, M) \nonumber \\
                                       &\leq& 2 \sup_{x\in \mathbb{R}} |\kappa|(x, M)=2\lVert \kappa \rVert. \nonumber
    \end{eqnarray}
\end{proof}

\section{Proof of Lemma~\ref{lma::1}} \label{pflma::1}

\begin{proof} 
    The case of a finite $A$ is clear. When $A$ is countably infinite, let $A = \left\{ y_n \right\}_{n = 1}^\infty \subseteq \mathbb{R}$ with pairwise distinct elements.
    Due to the fact that $A$ is well-ordered by $\leq$ on $\mathbb{R},$ we infer that it must have a minimal element, say $y_{n_1},$ with respect to the relation $\leq$ as a nonempty subset of itself. Then $A - \left\{ y_{n_1} \right\}$ is also countably infinite, so that $\varnothing \neq A - \left\{ y_{n_1} \right\} \subseteq A.$ 
    Now, by well-ordering of $A$ under $\leq,$ $A - \left\{ y_{n_1} \right\}$ must have a minimal element, say $y_{n_2},$ for which $y_{n_1} < y_{n_2}.$ 
    Continuing in this manner by induction, we obtain a permutation of $A = \left\{ y_n \right\}_{n = 1}^\infty$ such that

    $$-\infty < y_{n_1} < y_{n_2} < \cdots < y_{n_i} < y_{n_{i + 1}} < \cdots < +\infty.$$ Relabeling $x_i = x_{n_i}$ yields the desired construction.
\end{proof}

\section{Proof of Lemma~\ref{lma::2}} \label{pflma::2}

\begin{proof}
    For each of the item below, when the argument is \textit{symmetric}, in the sense that it does not invoke the \textit{asymmetric} property of the one-sided continuity for $F$ that it is c\`adl\`ag on $\mathbb{R}$
    with the difference only lying in the \textit{direction} of the order (upper sided interval with infimum versus lower sided interval with supremum), 
    we then only prove one version of the quantile function. But otherwise we give two slightly different proofs.

    For the first item, we only give a proof for $\overrightarrow{F}$ with the other one following symmetrically: Since $\lim_{x \rightarrow -\infty} F\left(x\right) = 0,$ 
    given $y \in \left(0, 1\right)$ there exists an $X_y > 0$ such that $\forall x < -X_y \Rightarrow F\left(x\right) < y$ so that $F^{-1}\left(\left[y, +\infty\right)\right) \subseteq \left[-X_y, +\infty\right).$ 
    Additionally, as $\lim_{x \rightarrow +\infty} F\left(x\right) = 1,$ we know there must exist an $x_0\left(y\right) > 0$ such that $F\left(x_0\right) \geq y$ so that $F^{-1}\left(\left[y, +\infty\right)\right) \neq \varnothing.$ 
    Henceforth, $\overrightarrow{F}\left(y\right) = \inf F^{-1}\left(\left[y, +\infty\right)\right) \in \mathbb{R}.$ Monotonicity of $\overrightarrow{F}$ follows from the fact that $\left[y, +\infty\right)$ is decreasing as $y$ increases, 
    then taking inverse image preserves this ordering and finally infimum operation reverses it.

    Towards the second item, assume there exists a $y \in \left(0, 1\right)$ such that $\overrightarrow{F}\left(y\right) > \overleftarrow{F}\left(y\right).$ Pick any $\overleftarrow{F}\left(y\right) < x < \overrightarrow{F}\left(y\right),$ then 
    $\overleftarrow{F}\left(y\right) = \sup F^{-1}\left(\left(-\infty, y\right]\right)$ shall lead to $F\left(x\right) > y,$ whereas $\overrightarrow{F}\left(y\right) = \inf F^{-1}\left(\left[y, +\infty\right)\right)$ yields $F\left(x\right) < y,$ which is a contradiction.

    If we let 
    \begin{eqnarray}
        A &=& \left\{ y \in \left(0, 1\right) : \overrightarrow{F}\left(y\right) < \overleftarrow{F}\left(y\right) \right\}, \nonumber \\
        B &=& \left\{ y \in \left(0, 1\right) : \mathrm{int}\left(F^{-1}\left(\left\{ y \right\}\right)\right) \neq \varnothing \right\}, \nonumber \\ 
        C &=& \left\{ y \in \left(0, 1\right) : \mathrm{int}\left(F^{-1}\left(\left\{ y \right\}\right)\right) = \left(\overrightarrow{F}\left(y\right), \overleftarrow{F}\left(y\right)\right) \right\}, \nonumber
    \end{eqnarray} then it is not hard to see that $C \subseteq B$ and $C \subseteq A.$ Thus, it suffices to prove $A \subseteq B \subseteq C.$

    \begin{itemize}
        \item[(i)] $A \subseteq B.$

            Fix any $y \in \left(0, 1\right)$ such that $\overrightarrow{F}\left(y\right) < \overleftarrow{F}\left(y\right).$ Then for any $x \in \left(\overrightarrow{F}\left(y\right), \overleftarrow{F}\left(y\right)\right)$ 
            by the definition of $\overleftarrow{F}$ as well as $\overrightarrow{F}$ again we know there exists $x_1 < x < x_2$ such that $F\left(x_1\right) \geq y, F\left(x_2\right) \leq y$ so by monotonicity of $F,$
            $$y \leq F\left(x_1\right) \leq F\left(x\right) \leq F\left(x_2\right) \leq y,$$ i.e., $F\left(x\right) = y$ or $x \in F^{-1}\left(\left\{ y \right\}\right).$ This simply implies that $\left(\overrightarrow{F}\left(y\right), \overleftarrow{F}\left(y\right)\right) \subseteq F^{-1}\left(\left\{ y \right\}\right)$
            and thus $\mathrm{int}\left(F^{-1}\left(\left\{ y \right\}\right)\right) \neq \varnothing.$
        \item[(ii)] $B \subseteq C.$
            Given any $y \in \left(0, 1\right)$ with $\mathrm{int}\left(F^{-1}\left(\left\{ y \right\}\right)\right) \neq \varnothing.$ $\mathrm{int}\left(F^{-1}\left(\left\{ y \right\}\right)\right)$ is bounded in $\mathbb{R}$:
            Otherwise there exists a sequence $\left\{ x_n \right\}_{n = 1}^\infty,$ that either diverges to $+\infty$ or $-\infty,$ such that $\forall n \in \mathbb{N}^\star, F\left(x_n\right) = y$
            but this contradicts $\lim_{x \rightarrow +\infty} F\left(x\right) = 1$ or $\lim_{x \rightarrow -\infty} F\left(x\right) = 0.$

            Then denote $m_y = \inf \left(\mathrm{int}\left(F^{-1}\left(\left\{ y \right\}\right)\right)\right), M_y = \sup \left(\mathrm{int}\left(F^{-1}\left(\left\{ y \right\}\right)\right)\right)$ with $-\infty < m_y \leq M_y < +\infty.$

            For each $x \in \mathrm{int}\left(F^{-1}\left(\left\{ y \right\}\right)\right),$
            due to the openness of $\mathrm{int}\left(F^{-1}\left(\left\{ y \right\}\right)\right),$ there exists an interval $\left(x_1, x_2\right) \subseteq \mathrm{int}\left(F^{-1}\left(\left\{ y \right\}\right)\right)$ such that
            $x \in \left(x_1, x_2\right),$ and henceforth $m_y \leq x_1 < x < x_2 \leq M_y,$ i.e., $x \in \left(m_y, M_y\right).$ 
            Furthermore, $\mathrm{int}\left(F^{-1}\left(\left\{ y \right\}\right)\right) \neq \varnothing$ shall imply $m_y < M_y.$

            On the other hand, $\forall x \in \left(m_y, M_y\right)$ by the definition of the infimum and the supremum again we have there exists $x_1, x_2 \in \mathrm{int}\left(F^{-1}\left(\left\{ y \right\}\right)\right)$ such that $x_1 < x < x_2.$
            Since $F\left(x_1\right) = F\left(x_2\right) = y,$ the monotonicity of $F$ forces $\left[x_1, x_2\right] \subseteq F^{-1}\left(\left\{ y \right\}\right)$ so that $\left(x_1, x_2\right) \subseteq \mathrm{int}\left(F^{-1}\left(\left\{ y \right\}\right)\right),$ 
            and $x \in \left(x_1, x_2\right)$ shall yield $x \in \mathrm{int}\left(F^{-1}\left(\left\{ y \right\}\right)\right),$ and we achieve $\mathrm{int}\left(F^{-1}\left(\left\{ y \right\}\right)\right) = \left(m_y, M_y\right).$ 

            For each $x \in F^{-1}\left(\left[y, +\infty\right)\right),$ if $F\left(x\right) > y$ then as $\mathrm{int}\left(F^{-1}\left(\left\{ y \right\}\right)\right) \neq \varnothing$ there exists an $x_0 \in \mathbb{R}$
            such that $F\left(x_0\right) = y,$ then we must have $x > x_0$ due to monotonicity of $F,$ and hence $x > m_y,$ whereas if $F\left(x\right) = y,$ we claim $x \geq m_y,$ otherwise if $x < m_y,$ 
            then by monotonicity of $F$ together with $\mathrm{int}\left(F^{-1}\left(\left\{ y \right\}\right)\right) \neq \varnothing$ again we know $y = F\left(x\right) \leq F\left(m_y\right) \leq y,$ so that $F\left(m_y\right) = y,$ 
            but this shall imply $\left(x, m_y\right) \subseteq F^{-1}\left(\left\{ y \right\}\right),$ therefore further $\left(x, m_y\right) \subseteq \mathrm{int}\left(F^{-1}\left(\left\{ y \right\}\right)\right),$ 
            contradicting the definition of $m_y.$ We have thus proved $m_y$ is a lower bound for $F^{-1}\left(\left[y, +\infty\right)\right).$ That $m_y$ is the greatest such is clear: For each $\epsilon > 0,$ 
            we can find a point $x_\epsilon \in \mathrm{int}\left(F^{-1}\left(\left\{ y \right\}\right)\right) \subseteq F^{-1}\left(\left\{ y \right\}\right) \subseteq F^{-1}\left(\left[y, +\infty\right)\right)$ such that $x_\epsilon < m_y + \epsilon.$

            Now we know $y = \inf F^{-1}\left(\left[y, +\infty\right)\right) = \overrightarrow{F}\left(y\right)$ and a symmetric argument leads to $M_y = \sup F^{-1}\left(\left[y, +\infty\right)\right) = \overleftarrow{F}\left(y\right).$
    \end{itemize}

    Continuing the proof for the ``and'' part of the second item, for each $y \in E,$ as we have proved 
    $\mathrm{int}\left(F^{-1}\left(\left\{ y \right\}\right)\right) = \left(\overrightarrow{F}\left(y\right), \overleftarrow{F}\left(y\right)\right) \neq \varnothing,$ then fix one rational $r_y \in \left(\overrightarrow{F}\left(y\right), \overleftarrow{F}\left(y\right)\right) \cap \mathbb{Q}.$
    Notice that if $y_1 \neq y_2,$ then $F^{-1}\left(\left\{ y_1 \right\}\right) \cap F^{-1}\left(\left\{ y_2 \right\}\right) = \varnothing,$ and so is their respective interior, i.e., 
    $\left(\overrightarrow{F}\left(y_1\right), \overleftarrow{F}\left(y_1\right)\right) \cap \left(\overrightarrow{F}\left(y_2\right), \overleftarrow{F}\left(y_2\right)\right) = \varnothing,$ which entails $r_{y_1} \neq r_{y_2}$ in our choice. 

    The above explanation tells us the map 
    \begin{align}
        \varphi : E &\rightarrow \mathbb{Q} \nonumber \\
               y &\mapsto r_y \nonumber
    \end{align} is injective, and hence $\# E \leq \# \mathbb{Q} = \aleph_0.$

    For the next item, first of all as proved just now we know $\forall y \in E, \forall x \in \left(\overrightarrow{F}\left(y\right), \overleftarrow{F}\left(y\right)\right) \Rightarrow F\left(x\right) = y,$ 
    then by the right-continuity of $F$ at $\overrightarrow{F}\left(y\right)$ along with the definition of $x \mapsto F\left(x-\right)$ at $\overleftarrow{F}\left(y\right)$ we immediately reach $F\left(\overrightarrow{F}\left(y\right)\right) = F\left(\overleftarrow{F}\left(y\right)-\right) = y.$

    In addition, for the fourth item, $\forall x \in G \Rightarrow x \in F^{-1}\left(\left[F\left(x\right), +\infty\right)\right)$ so that $x \geq \overrightarrow{F}\left(F\left(x\right)\right).$ 
    The disjointness of $\left\{ E_{1, y} \right\}_{y \in E}$ follows from the observation when proving the second that if $y_1 \neq y_2,$ then 
    $\left(\overrightarrow{F}\left(y_1\right), \overleftarrow{F}\left(y_1\right)\right) \cap \left(\overrightarrow{F}\left(y_2\right), \overleftarrow{F}\left(y_2\right)\right) = \varnothing$
    which is still disjoint even if we close either one of the right end point for these two intervals.

    Now $\forall x \in \sqcup_{y \in E} E_{1, y}$ there exists a $y \in E$ such that $x \in E_{1, y}.$ If $F$ is left continuous at $\overleftarrow{F}\left(y\right),$ 
    then from the third item just proved we know $F\left(\overrightarrow{F}\left(y\right)\right) = F\left(\overleftarrow{F}\left(y\right)\right) = y$ and hence $\overrightarrow{F}\left(y\right) < x \leq \overleftarrow{F}\left(y\right)$ shall imply that $F\left(x\right) = y \in \left(0, 1\right)$
    so that $x \in G$ and $\overrightarrow{F}\left(F\left(x\right)\right) < x.$ If $F$ jumps at $\overleftarrow{F}\left(y\right),$ 
    then $x \in \left(\overrightarrow{F}\left(y\right), \overleftarrow{F}\left(y\right)\right) = \mathrm{int}\left(F^{-1}\left(\left\{ y \right\}\right)\right)$ by the definition of $E,$ so we still have $\left(F\left(x\right) = y \in \left(0, 1\right)\right) \Rightarrow \left(x \in G\right)$ and $\overrightarrow{F}\left(F\left(x\right)\right) = \overrightarrow{F}\left(y\right) < x.$

    Conversely, given $x \in \mathbb{R}$ with $F\left(x\right) \in \left(0, 1\right)$ such that $\overrightarrow{F}\left(F\left(x\right)\right) < x,$ then by the definition of the infimum we claim there exists an $x^\prime \in \mathbb{R}, F\left(x^\prime\right) \geq F\left(x\right)$ such that $x^\prime < x$
    but this also implies $F\left(x^\prime\right) \leq F\left(x\right)$ and hence by the monotonicity of $F$ one more time we have $\left(x^\prime, x\right) \subseteq F^{-1}\left(\left\{ F\left(x\right)\right\}\right).$
    Therefore, $\mathrm{int}\left(F^{-1}\left(\left\{ F\left(x\right) \right\}\right)\right) \neq \varnothing$ so that $F\left(x\right) \in E$ with $\mathrm{int}\left(F^{-1}\left(\left\{ F\left(x\right) \right\}\right)\right) = \left(\overrightarrow{F}\left(F\left(x\right)\right), \overleftarrow{F}\left(F\left(x\right)\right)\right).$

    We also know $x \leq \overleftarrow{F}\left(F\left(x\right)\right),$ and if $F$ is left continuous at $\overleftarrow{F}\left(F\left(x\right)\right),$ combined with $\overrightarrow{F}\left(F\left(x\right)\right) < x,$ we immediately know 
    $x \in \left(\overrightarrow{F}\left(F\left(x\right)\right), \overleftarrow{F}\left(F\left(x\right)\right)\right] = E_{1, F\left(x\right)};$ on the other hand, if $F$ jumps at $\overleftarrow{F}\left(F\left(x\right)\right),$ then as $F|_{\left(\overrightarrow{F}\left(F\left(x\right)\right), \overleftarrow{F}\left(F\left(x\right)\right)\right)} = F\left(x\right)$ we have
    $F\left(x\right) = F\left(\overleftarrow{F}\left(F\left(x\right)\right)-\right) < F\left(\overleftarrow{F}\left(F\left(x\right)\right)\right),$ excluding the possibility that $x = \overleftarrow{F}\left(F\left(x\right)\right)$ and hence $x < \overleftarrow{F}\left(F\left(x\right)\right).$
    Therefore, we again have $x \in \left(\overrightarrow{F}\left(F\left(x\right)\right), \overleftarrow{F}\left(F\left(x\right)\right)\right) = E_{1, F\left(x\right)}.$

    In the same manner symmetrically $\forall x \in G \Rightarrow x \in F^{-1}\left(\left(-\infty, F\left(x\right)\right]\right)$ so that $x \leq \overleftarrow{F}\left(F\left(x\right)\right)$ 
    and so is the disjointness of $\left\{ E_{2, y} \right\}_{y \in E}.$ We also see $\forall x \in \sqcup_{y \in E} E_{2, y}$ there exists a $y \in E$ such that $x \in E_{2, y},$ 
    and furthermore $\overrightarrow{F}\left(y\right) \leq x < \overleftarrow{F}\left(y\right)$ 
    with the conclusion of the third item we obtain $F\left(x\right) = y \in \left(0, 1\right),$ resulting in $x \in G$ and $\overleftarrow{F}\left(F\left(x\right)\right) > x.$

    For the reverse direction, given $x \in \mathbb{R}$ with $F\left(x\right) \in \left(0, 1\right)$ such that $\overleftarrow{F}\left(F\left(x\right)\right) > x$
    and by the definition of supremum we have got as before $F\left(x\right) \in E$ with $\mathrm{int}\left(F^{-1}\left(\left\{ F\left(x\right) \right\}\right)\right) = \left(\overrightarrow{F}\left(F\left(x\right)\right), \overleftarrow{F}\left(F\left(x\right)\right)\right) \neq \varnothing.$
    Then $x \geq \overrightarrow{F}\left(F\left(x\right)\right)$ in conjunction with $\overleftarrow{F}\left(F\left(x\right)\right) > x$ shall entail
    $x \in \left[\overrightarrow{F}\left(F\left(x\right)\right), \overleftarrow{F}\left(F\left(x\right)\right)\right) = E_{2, F\left(x\right)},$ which completes the proof of this fourth item.

    For the first subclaim of the fifth item, notice by the previous item,

    $$\left\{ x \in G : \left(\overrightarrow{F} \circ F|_G\right)\left(x\right) < x \right\} = \sqcup_{y \in E} E_{1, y}.$$

    First of all for each $x \in E_{1, y}$ where $F$ is (left) continuous at $\overleftarrow{F}\left(y\right), \overrightarrow{F}\left(y\right) < x \leq \overleftarrow{F}\left(y\right)$ 
    by the third item again we have $y = F\left(\overrightarrow{F}\left(y\right)\right) \leq F\left(x-\right) \leq F\left(x\right) \leq F\left(\overleftarrow{F}\left(y\right)\right) = F\left(\overleftarrow{F}\left(y\right)-\right) = y$ 
    so that $\left(F\left(x-\right) = F\left(x\right)\right) \Rightarrow \left(x \in D_F^\mathsf{c}\right).$ The other case when $F$ jumps at $\overleftarrow{F}\left(y\right)$ is essentially similar as 
    $\overrightarrow{F}\left(y\right) < x < \overleftarrow{F}\left(y\right)$ shall imply $y = F\left(\overrightarrow{F}\left(y\right)\right) \leq F\left(x-\right) \leq F\left(x\right) \leq F\left(\overleftarrow{F}\left(y\right)-\right) = y.$

    \begin{itemize}
        \item[(i)] $F$ is (left) continuous at $\overleftarrow{F}\left(y\right).$

            By the left continuity and by the right continuity of $F$ at $\overleftarrow{F}\left(y\right)$ and $\overrightarrow{F}\left(y\right),$ respectively,
            \begin{eqnarray}
                \mu_F\left(\left(\overrightarrow{F}\left(y\right), \overleftarrow{F}\left(y\right)\right]\right) &=& F\left(\overleftarrow{F}\left(y\right)\right) - F\left(\overrightarrow{F}\left(y\right)\right) \nonumber \\
                                                                                                                 &=& F\left(\overleftarrow{F}\left(y\right)-\right) - F\left(\overrightarrow{F}\left(y\right)+\right) \nonumber \\
                                                                                                                 &=& y - y = 0, \nonumber
            \end{eqnarray} so that $\mu_F\left(E_{1, y}\right) = 0.$
        \item[(ii)] $F$ jumps at $\overleftarrow{F}\left(y\right).$
            \begin{eqnarray}
                \mu_F\left(\left(\overrightarrow{F}\left(y\right), \overleftarrow{F}\left(y\right)\right)\right) &=& F\left(\overleftarrow{F}\left(y\right)-\right) - F\left(\overrightarrow{F}\left(y\right)\right) \nonumber \\
                                                                                                                 &=& F\left(\overleftarrow{F}\left(y\right)-\right) - F\left(\overrightarrow{F}\left(y\right)+\right) \nonumber \\
                                                                                                                 &=& y - y = 0, \nonumber
            \end{eqnarray} and hence $\mu_F\left(E_{1, y}\right) = 0.$
    \end{itemize}

    Besides, given any open interval $\left(a, b\right) \subseteq D_F^\mathsf{c},$
    $$\left\{ x \in G : \left(\overleftarrow{F} \circ F|_G\right)\left(x\right) > x \right\} \cap \left(a, b\right) = \sqcup_{y \in E} \left(E_{2, y} \cap \left(a, b\right)\right),$$
    for which for each $E_{2, y} \cap \left(a, b\right)$ such that it is non-empty, 
    $$E_{2, y} \cap \left(a, b\right) = L_a \sqcup \left(\max \left\{ a, \overrightarrow{F}\left(y\right) \right\}, \min \left\{ b, \overleftarrow{F}\left(y\right) \right\}\right),$$
    where $L_a = \left\{\begin{array}{cc}
                            \varnothing, & \mathrm{if}\ a \geq \overrightarrow{F}\left(y\right), \nonumber \\
                            \left\{ \overrightarrow{F}\left(y\right) \right\}, & \mathrm{if}\ a < \overrightarrow{F}\left(y\right) \nonumber
                        \end{array}\right.$ 
    such that $\mu_F\left(L_a\right) = 0$ since $F\left(\overrightarrow{F}\left(y\right)-\right) = F\left(\overrightarrow{F}\left(y\right)\right)$ provided $a < \overrightarrow{F}\left(y\right)$ 
    due to in this case $E_{2, y} \cap \left(a, b\right) \neq \varnothing$ necessarily implies $b > \overrightarrow{F}\left(y\right)$ so that $\overrightarrow{F}\left(y\right) \in \left(a, b\right) \subseteq D_F^\mathsf{c}.$

    Next as $\left(\max \left\{ a, \overrightarrow{F}\left(y\right) \right\}, \min \left\{ b, \overleftarrow{F}\left(y\right) \right\}\right) \subseteq \left(\overrightarrow{F}\left(y\right), \overleftarrow{F}\left(y\right)\right) = \mathrm{int}\left(F^{-1}\left(\left\{ y \right\}\right)\right)$ we then have
    \begin{eqnarray}
        \mu_F\left(\left(\max \left\{ a, \overrightarrow{F}\left(y\right) \right\}, \min \left\{ b, \overleftarrow{F}\left(y\right) \right\}\right)\right) &=& F\left(\min \left\{ b, \overleftarrow{F}\left(y\right) \right\}-\right) - F\left(\max \left\{ a, \overrightarrow{F}\left(y\right) \right\}\right) \nonumber \\
                                                                                                                                                           &=& F\left(\min \left\{ b, \overleftarrow{F}\left(y\right) \right\}-\right) - F\left(\max \left\{ a, \overrightarrow{F}\left(y\right) \right\}+\right) \nonumber \\
                                                                                                                                                           &=& y - y = 0. \nonumber
    \end{eqnarray} With $\mu_F\left(L_a\right) = 0$ it yields $\mu_F\left(E_{2, y} \cap \left(a, b\right)\right) = 0.$

    Now that we have proved for each $y \in E, \mu_F\left(E_{1, y}\right) = \mu_F\left(E_{2, y} \cap \left(a, b\right)\right) = 0,$ the countability of $E$ established at the end of the second item concludes the main statement.

    For the second sub-item, if $F^{-1}\left(\left\{ 0 \right\}\right) = \varnothing,$ then the conclusion is clear. 
    Otherwise, if $F^{-1}\left(\left\{ 0 \right\}\right) \neq \varnothing,$ then $F^{-1}\left(\left\{ 0 \right\}\right)$ is bounded above, because similar to a previous argument 
    on the contrary we shall obtain a sequence $\left\{ x_n \right\}_{n = 1}^\infty$ that converges to $+\infty$ with 
    $\forall n \in \mathbb{N}^\star \Rightarrow F\left(x_n\right) = 0,$ contradicting $\lim_{x \rightarrow +\infty} F\left(x\right) = 1.$ Let $x_0 = \sup F^{-1}\left(\left\{ 0 \right\}\right),$ we aim to show that
    \begin{eqnarray}
        F^{-1}\left(\left\{ 0 \right\}\right) &=& \left\{\begin{array}{cc}
                                                            \left(-\infty, x_0\right], & \mathrm{if}\ F\ \mathrm{is\ (left)\ continuous\ at}\ x_0, \nonumber \\
                                                            \left(-\infty, x_0\right), & \mathrm{if}\ F\ \mathrm{jumps\ at}\ x_0. \nonumber \end{array}\right. \nonumber \\ \label{(4)}
    \end{eqnarray}

    Since $x_0$ is an upper bound for $F^{-1}\left(\left\{ 0 \right\}\right),$ we have, by the definition of $\leq, F^{-1}\left(\left\{ 0 \right\}\right) \subseteq \left(-\infty, x_0\right].$ 
    When $F$ jumps at $x_0,$ then $F\left(x_0\right) > 0,$
    otherwise by monotonicity of $F$ $\forall x \leq x_0 \Rightarrow F\left(x\right) = 0,$ which implies $F\left(x_0\right) = F\left(x_0-\right) = 0$ a contradiction, 
    so $\left\{ x_0 \right\} \not\subseteq F^{-1}\left(\left\{ 0 \right\}\right),$ i.e., $F^{-1}\left(\left\{ 0 \right\}\right) \subseteq \left(-\infty, x_0\right)$ if $F$ jumps at $x_0.$

    Conversely, suppose $x \not\in F^{-1}\left(\left\{ 0 \right\}\right),$ in other words, $F\left(x\right) > 0,$ then by the definition of supremum, we know 
    $\forall \epsilon > 0, \exists x_\epsilon \in \left(x_0 - \epsilon, x_0\right] \ni F\left(x_\epsilon\right) = 0.$ Now, 
    \begin{itemize}
        \item[(i)] $F$ is (left) continuous at $x_0.$
            
            For now the left continuity of $F$ at $x_0$ implies $F\left(x_0\right) = F\left(x_0-\right) = 0.$ 
            Then by monotonicity of $F$ clearly $x > x_0,$ or $x \not\in \left(-\infty, x_0\right].$
        \item[(ii)] $F$ jumps at $x_0.$

            Upon the current situation, assume $x < x_0,$ by taking the above $\epsilon = x_0 - x > 0$ gives an $x_\epsilon > x$ that satisfies $F\left(x_\epsilon\right) = 0$ 
            but, this enforces $F\left(x\right) = 0$ by monotonicity of $F$ hence a contradiction arises again. Therefore, $x \not\in \left(-\infty, x_0\right).$
    \end{itemize}

    By the proved equality~(\ref{(4)}) it is clearly true that $\forall x \in F^{-1}\left(\left\{ 0 \right\}\right), F$ is left continuous at $x$ 
    as in both cases $F|_{\left(-\infty, x_0\right)} = 0$ for which $\left(-\infty, x\right] \subseteq \left(-\infty, x_0\right)$ if $x < x_0$ 
    and only if $F$ is left continuous at $x_0, x$ may take $x_0.$ 
    This means $F^{-1}\left(\left\{ 0 \right\}\right) \subseteq D_F^\mathsf{c}$ and hence $F^{-1}\left(\left\{ 0 \right\}\right) - D_F = F^{-1}\left(\left\{ 0 \right\}\right).$

    Furthermore, as at the onset of the proof for the converse direction of this containment, 
    due to the definition of supremum we always know $F\left(x_0-\right) = 0,$ and then,
    \begin{eqnarray}
        \mu_F\left(F^{-1}\left(\left\{ 0 \right\}\right)\right) &=& \left\{\begin{array}{lc}
                                                                                F\left(x_0\right) = F\left(x_0-\right) = 0, & \mathrm{if}\ F\ \mathrm{is\ (left)\ continuous\ at}\ x_0, \nonumber \\
                                                                                F\left(x_0-\right) = 0, & \mathrm{if}\ F\ \mathrm{jumps\ at}\ x_0. \nonumber 
                                                                            \end{array}\right. \nonumber
    \end{eqnarray}

    Analogously, if $F^{-1}\left(\left\{ 1 \right\}\right) = \varnothing,$ then there is nothing to prove. 
    Otherwise, if $F^{-1}\left(\left\{ 1 \right\}\right) \neq \varnothing,$ then $F^{-1}\left(\left\{ 1 \right\}\right)$ is bounded below due to $\lim_{x \rightarrow -\infty} F\left(x\right) = 0.$ Let $x_1 = \inf F^{-1}\left(\left\{ 1 \right\}\right),$ we intend to prove that
    $F^{-1}\left(\left\{ 1 \right\}\right) = \left[x_1, +\infty\right).$

    Since $x_1$ is a lower bound for $F^{-1}\left(\left\{ 1 \right\}\right),$ we have, by the definition of $\geq, F^{-1}\left(\left\{ 1 \right\}\right) \subseteq \left[x_1, +\infty\right).$ 
    Conversely, suppose $x \in \left[x_1, +\infty\right),$ in other words, $x \geq x_1,$ then by the definition of the infimum, we know 
    $\forall \epsilon > 0, \exists x_\epsilon \in \left[x_1, x_1 + \epsilon\right) \ni F\left(x_\epsilon\right) = 1.$ The right continuity of $F$ gives us 
    $F\left(x_1\right) = F\left(x_1+\right) = 1$ and so by monotonicity of $F, F\left(x\right) = 1$ so that $x \in F^{-1}\left(\left\{ 1 \right\}\right).$

    Now, 
    \begin{eqnarray}
        && \mu_F\left(F^{-1}\left(\left\{ 1 \right\}\right) - D_F\right) \nonumber \\
        &=& \left\{\begin{array}{lc}
                        \mu_F\left(\left[x_1, +\infty\right)\right) = 1 - F\left(x_1-\right) = 1 - F\left(x_1\right) = 0, & \mathrm{if}\ F\ \mathrm{is\ (left)\ continuous\ at}\ x_1, \nonumber \\
                        \mu_F\left(\left(x_1, +\infty\right)\right) = 1 - F\left(x_1\right) = 0, & \mathrm{if}\ F\ \mathrm{jumps\ at}\ x_1, \nonumber 
                   \end{array}\right. \nonumber
    \end{eqnarray} which completes this second sub-item.

    For the ``In particular'' part of the theorem, by (i) of the last item we know when $F$ is continuous, $F\left(X\right) \in \left(0, 1\right)\ \left[P\right],$ which implies $X \in G\ \left[P\right].$ Note that 
    \begin{eqnarray}
        \left\{ \overrightarrow{F}\left(F\left(X\right)\right) \neq X \right\} \cap \left\{ X \in G \right\} &=& \left\{ \left(\overrightarrow{F} \circ F|_G\right)\left(X\right) < X \right\} \cap \left\{ X \in G \right\} \nonumber \\
                                                                                                             &=& X^{-1}\left(\left\{ x \in G : \left(\overrightarrow{F} \circ F|_G\right)\left(x\right) < x \right\}\right). \nonumber
    \end{eqnarray}

    The continuity of $F,$ in other words, $D_F = \varnothing,$ allows us to take $\left(a, b\right) = \left(-\infty, +\infty\right) \subseteq \varnothing^\mathsf{c}$ specifically in the second sub-item of the conclusion just proved and also as $X \sim F,$ any constructed $P$ 
    satisfies $PX^{-1} = \mu_F.$

    To the sixth item, $F\left(x\right) \geq y$ entails $x \in F^{-1}\left(\left[y, +\infty\right)\right),$ so we achieve $\overrightarrow{F}\left(y\right) = \inf F^{-1}\left(\left(-\infty, y\right]\right) \leq x.$ 
    Conversely, if $F\left(x\right) < y,$ then by monotonicity of $F, \forall x^\prime \in F^{-1}\left(\left[y, +\infty\right)\right) \Rightarrow x^\prime > x$ so that 
    $x$ is a lower bound for $F^{-1}\left(\left[y, +\infty\right)\right)$ and hence $\overrightarrow{F}\left(y\right) \geq x.$ 
    The equality here cannot hold for otherwise $\forall \epsilon > 0, \exists x_\epsilon \in  \left[x, x + \epsilon\right) \ni F\left(x_\epsilon\right) \geq y$
    and by the right continuity of $F$ at $x,$ we infer that $F\left(x\right) \geq y$ contradicting $F\left(x\right) < y.$ Thus, $\overrightarrow{F}\left(y\right) > x.$

    Similarly, $F\left(x\right) \leq y$ shall imply $x \in F^{-1}\left(\left(-\infty, y\right]\right),$ therefore we immediately know $\overleftarrow{F}\left(y\right) = \sup F^{-1}\left(\left(-\infty, y\right]\right) \geq x.$
    When $\overleftarrow{F}\left(y\right) > x,$ by the definition of the supremum we claim that there exists an $x^\prime > x$ such that $F\left(x^\prime\right) \leq y$ and by monotonicity of $F,$ we have $F\left(x\right) \leq y.$
    By this same logic whence $\overleftarrow{F}\left(y\right) = x,$ we get $\forall \epsilon > 0, \exists x_\epsilon \in \left(x - \epsilon, x\right] \ni F\left(x_\epsilon\right) \leq y.$ Then, if $F\left(x\right) \leq y,$ 
    we are good as $F\left(x-\right) \leq F\left(x\right) \leq y.$ On the other hand, if $F\left(x\right) > y,$ then we know each $x_\epsilon$ above must lie in $\left(x - \epsilon, x\right)$ and
    by monotonicity of $F$ again, we have $F\left(x-\right) \leq y.$

    Up to the next item, $\forall y \in \left(F\left(x-\right), F\left(x\right)\right] \cap \left(0, 1\right) \Rightarrow F\left(x-\right) < y \leq F\left(x\right).$ Suppose $x$ is not a lower bound for $F^{-1}\left(\left[y, +\infty\right)\right),$ 
    then there exists an $x^\prime < x$ such that $F\left(x^\prime\right) \geq y$ and this further tells us $F\left(x-\right) \geq F\left(x^\prime\right) \geq y$ which is a contradiction. 
    Now for each $\epsilon > 0, \exists x \in \left[x, x + \epsilon\right) \ni x \in F^{-1}\left(\left[y, +\infty\right)\right),$ which implies $x$ is greatest such, i.e., $\overrightarrow{F}\left(y\right) = x.$

    On the other hand, $\forall y \in \left[F\left(x-\right), F\left(x\right)\right) \cap \left(0, 1\right) \Rightarrow F\left(x-\right) \leq y < F\left(x\right).$ Suppose $x$ is not an upper bound for $F^{-1}\left(\left[-\infty, y\right)\right)$ 
    then there exists an $x^\prime > x$ such that $F\left(x^\prime\right) \leq y$ and by monotonicity of $F, F\left(x\right) \leq F\left(x^\prime\right) \leq y,$ which leads to a contradiction. As $F\left(x-\right) \leq y,$ 
    for each $\epsilon > 0, \exists x_\epsilon \in \left(x - \epsilon, x\right) \ni F\left(x_\epsilon\right) \leq y \Rightarrow x_\epsilon \in F^{-1}\left(\left[-\infty, y\right)\right),$ which implies 
    $\overleftarrow{F}\left(y\right) = \sup F^{-1}\left(\left[-\infty, y\right)\right) = x.$

    Then for the eighth item, for each $y^\prime \in \left(0, 1\right), \epsilon > 0, F\left(\overrightarrow{F}\left(y^\prime\right) - \epsilon\right) \geq y$ 
    would lead to $\overrightarrow{F}\left(y^\prime\right) - \epsilon \in F^{-1}\left(\left[y, +\infty\right)\right)$ so that $\overrightarrow{F}\left(y^\prime\right) - \epsilon \geq \inf F^{-1}\left(\left[y, +\infty\right)\right) = \overrightarrow{F}\left(y\right)$
    contradicting the fact that $\epsilon > 0.$ For the given $\epsilon > 0$ by the definition of infimum we know there exists a
    $x_\epsilon \in \left[\overrightarrow{F}\left(y\right), \overrightarrow{F}\left(y\right) + \epsilon\right)$ such that $F\left(x_\epsilon\right) \geq y$ 
    so by right continuity of $F$ at $\overrightarrow{F}\left(y\right),$ we then infer that $F\left(\overrightarrow{F}\left(y\right)\right) \geq y.$

    Analogously, for the given $\epsilon > 0,$ by the definition of the supremum, there exists an 
    $x_\epsilon \in \left(\overleftarrow{F}\left(y\right) - \epsilon, \overrightarrow{F}\left(y\right)\right]$ such that $F\left(x_\epsilon\right) \leq y$ and the monotonicity of $F$ yields 
    $F\left(\overleftarrow{F}\left(y\right) - \epsilon\right) \leq F\left(x_\epsilon\right) \leq y.$ The ``In particular'' part of the theorem follows by letting $\epsilon \downarrow 0$ in those proved expressions.

    The (i) of last item is Lemma 2, with its immediate consequence, of \cite{brockwell2007universal}.

    Towards (ii), for each $z \in \left(0, 1\right),$
    \begin{eqnarray}
        F_Z\left(z\right) = P\left(\left\{ Z \leq z \right\}\right) &=& E\left(P\left(\left\{ Z \leq z \right\}\ |\ X\right)\right) \nonumber \\
                                                                    &=& E\left(P\left(\left\{ \mu_F\left(\left(-\infty, X\right)\right) + \mu_F\left(\left\{ X \right\}\right)U \leq z \right\}\ |\ X\right)\right) \nonumber \\
                                                                    &=& E\left(P\left(\left\{ \mu_F\left(\left(-\infty, X\right)\right) + \mu_F\left(\left\{ X \right\}\right)U \leq z \right\} \cap \left\{ X \in D_F \right\}\ |\ X\right)\right) \nonumber \\
                                                                    &+& E\left(P\left(\left\{ \mu_F\left(\left(-\infty, X\right]\right) \leq z \right\} \cap \left\{ X \not\in D_F \right\}\ |\ X\right)\right) \nonumber \\
                                            \left((\ref{(1)})\ \mathrm{to}\ (\ref{(3)})\ \mathrm{of}\ 6\ \mathrm{above}\right) \Rightarrow &=& E\left(P\left(\left\{ U \leq \frac{z - \mu_F\left(\left(-\infty, X\right)\right)}{\mu_F\left(\left\{ X \right\}\right)} \right\} \cap \left\{ X \in D_F \right\}\ |\ X\right)\right) \nonumber \\
                                                                    &+& E\left(P\left(\left\{ X \leq \overleftarrow{F}\left(z\right) \right\} \cap \left\{ X \not\in D_F \right\}\ |\ X\right)\right) \nonumber \\
                          \left(U \independent X\right) \Rightarrow &=& E\left(H\left(\frac{z - \mu_F\left(\left(-\infty, X\right)\right)}{\mu_F\left(\left\{ X \right\}\right)}\right)\mathbb{I}_{\left\{ X \in D_F \right\}}\right) + E\left(\mathbb{I}_{\left\{ X \in \left(-\infty, \overleftarrow{F}\left(z\right)\right] - D_F\right\}}\right) \nonumber \\
                                                                    &=& \int_{\mathbb{R}} \left(H\left(\frac{z - \mu_F\left(\left(-\infty, x\right)\right)}{\mu_F\left(\left\{ x \right\}\right)}\right)\mathbb{I}_{D_F}\left(x\right) + \mathbb{I}_{\left(-\infty, \overleftarrow{F}\left(z\right)\right] - D_F}\left(x\right)\right) \mu_F\left(dx\right).  \nonumber \\ \label{(43)} 
    \end{eqnarray}

    With identical conditioning techniques by changing $\leq$ to $<$ throughout, we could also derive
    \begin{eqnarray}
        F_Z\left(z-\right) = P\left(\left\{ Z < z \right\}\right) &=& E\left(P\left(\left\{ Z < z \right\}\ |\ X\right)\right) \nonumber \\
                                                                  &=& E\left(P\left(\left\{ \mu_F\left(\left(-\infty, X\right)\right) + \mu_F\left(\left\{ X \right\}\right)U < z \right\}\ |\ X\right)\right) \nonumber \\
                                                                  &=& E\left(P\left(\left\{ \mu_F\left(\left(-\infty, X\right)\right) + \mu_F\left(\left\{ X \right\}\right)U < z \right\} \cap \left\{ X \in D_F \right\}\ |\ X\right)\right) \nonumber \\
                                                                  &+& E\left(P\left(\left\{ \mu_F\left(\left(-\infty, X\right]\right) < z \right\} \cap \left\{ X \not\in D_F \right\}\ |\ X\right)\right) \nonumber \\
        \left((\ref{(42)})\ \mathrm{of}\ 6\ \mathrm{above}\right) \Rightarrow &=& E\left(P\left(\left\{ U < \frac{z - \mu_F\left(\left(-\infty, X\right)\right)}{\mu_F\left(\left\{ X \right\}\right)} \right\} \cap \left\{ X \in D_F \right\}\ |\ X\right)\right) \nonumber \\
                                                                  &+& E\left(P\left(\left\{ X < \overrightarrow{F}\left(z\right) \right\} \cap \left\{ X \not\in D_F \right\}\ |\ X\right)\right) \nonumber \\
                        \left(U \independent X\right) \Rightarrow &=& E\left(H\left(\left[\frac{z - \mu_F\left(\left(-\infty, X\right)\right)}{\mu_F\left(\left\{ X \right\}\right)}\right]-\right)\mathbb{I}_{\left\{ X \in D_F \right\}}\right) + E\left(\mathbb{I}_{\left\{ X \in \left(-\infty, \overrightarrow{F}\left(z\right)\right) - D_F \right\}}\right) \nonumber \\
                                                                  &=& \int_{\mathbb{R}} \left(H\left(\left[\frac{z - \mu_F\left(\left(-\infty, x\right)\right)}{\mu_F\left(\left\{ x \right\}\right)}\right]-\right)\mathbb{I}_{D_F}\left(x\right) + \mathbb{I}_{\left(-\infty, \overrightarrow{F}\left(z\right)\right) - D_F}\left(x\right)\right) \mu_F\left(dx\right),  \nonumber \\\label{(44)} 
    \end{eqnarray} and by subtracting~(\ref{(44)}) off of~(\ref{(43)}) and noting $2$ above ($\overrightarrow{F}\left(z\right) \leq \overleftarrow{F}\left(z\right)$) we obtain 
    \begin{eqnarray}
        && P\left(\left\{ Z = z \right\}\right) \nonumber \\
        &=& \int_{\mathbb{R}} \left(\left\{H\left(\frac{z - \mu_F\left(\left(-\infty, x\right)\right)}{\mu_F\left(\left\{ x \right\}\right)}\right) - H\left(\left[\frac{z - \mu_F\left(\left(-\infty, x\right)\right)}{\mu_F\left(\left\{ x \right\}\right)}\right]-\right)\right\}\mathbb{I}_{D_F}\left(x\right) + \mathbb{I}_{\left[\overrightarrow{F}\left(z\right), \overleftarrow{F}\left(z\right)\right] - D_F}\left(x\right)\right) \mu_F\left(dx\right). \nonumber\\ \label{(45)}
    \end{eqnarray}

    Now, for the previously fixed $z \in \left(0, 1\right),$
    \begin{itemize}
        \item[(i)] $\overrightarrow{F}\left(z\right) = \overleftarrow{F}\left(z\right).$

            When $F$ is (left)-continuous at $\overrightarrow{F}\left(z\right),$ then $\left[\overrightarrow{F}\left(z\right), \overleftarrow{F}\left(z\right)\right] - D_F = \left[\overrightarrow{F}\left(z\right), \overleftarrow{F}\left(z\right)\right] = \left\{ \overrightarrow{F}\left(z\right) \right\} = \varnothing\ \left[\mu_F\right]$ so that $\mathbb{I}_{\left[\overrightarrow{F}\left(z\right), \overleftarrow{F}\left(z\right)\right] - D_F} = 0\ \left[\mu_F\right].$ 
            Otherwise, when $F$ jumps at $\overrightarrow{F}\left(z\right),$ then $\left[\overrightarrow{F}\left(z\right), \overleftarrow{F}\left(z\right)\right] - D_F = \varnothing$ so that $\mathbb{I}_{\left[\overrightarrow{F}\left(z\right), \overleftarrow{F}\left(z\right)\right] - D_F} = 0.$
        \item[(ii)] $\overrightarrow{F}\left(z\right) < \overleftarrow{F}\left(z\right).$

            Since one can for free remove countably many singletons outside of $D_F$ under $\mu_F,$ in this scenario we see that $\left[\overrightarrow{F}\left(z\right), \overleftarrow{F}\left(z\right)\right] - D_F = \left(\overrightarrow{F}\left(z\right), \overleftarrow{F}\left(z\right)\right)\ \left[\mu_F\right]$ for which by $3$ above 
        \begin{eqnarray}
            \mu_F\left(\left(\overrightarrow{F}\left(z\right), \overleftarrow{F}\left(z\right)\right)\right) &=& F\left(\overleftarrow{F}\left(z\right)-\right) - F\left(\overrightarrow{F}\left(z\right)\right) = z - z = 0, \nonumber
        \end{eqnarray} which again implies $\mathbb{I}_{\left[\overrightarrow{F}\left(z\right), \overleftarrow{F}\left(z\right)\right] - D_F} = 0\ \left[\mu_F\right].$
    \end{itemize}

    Thereby we simplify~(\ref{(45)}) towards
    \begin{eqnarray}
        P\left(\left\{ Z = z \right\}\right) &=& \int_{\mathbb{R}} \left\{H\left(\frac{z - \mu_F\left(\left(-\infty, x\right)\right)}{\mu_F\left(\left\{ x \right\}\right)}\right) - H\left(\left[\frac{z - \mu_F\left(\left(-\infty, x\right)\right)}{\mu_F\left(\left\{ x \right\}\right)}\right]-\right)\right\}\mathbb{I}_{D_F}\left(x\right) \mu_F\left(dx\right), \nonumber\\ \label{(46)} \\
                                             &=& \sum_{x \in D_F} \left\{H\left(\frac{z - \mu_F\left(\left(-\infty, x\right)\right)}{\mu_F\left(\left\{ x \right\}\right)}\right) - H\left(\left[\frac{z - \mu_F\left(\left(-\infty, x\right)\right)}{\mu_F\left(\left\{ x \right\}\right)}\right]-\right)\right\} \mu_F\left(\left\{ x \right\}\right). \nonumber
    \end{eqnarray}

    For the special case of $z = 0,$ it is clear that $F_Z\left(0-\right) = P\left(\left\{ Z < 0 \right\}\right) = 0$ due to $U > 0\ \left[P\right]$ and in the previous development to~(\ref{(43)}) we simply observe that 
    $\left\{ \mu_F\left(\left(-\infty, X\right]\right) \leq 0 \right\} = \left\{ X \in F^{-1}\left(\left\{ 0 \right\}\right) \right\} = \varnothing\ \left[P\right]$ by (ii) of $5$ above and also $\left\{ U \leq -\frac{\mu_F\left(\left(-\infty, X\right)\right)}{\mu_F\left(\left\{ X \right\}\right)} \right\} \cap \left\{ X \in D_F \right\} = \varnothing\ \left[P\right]$ 
    so that finally we have $F_Z\left(0\right) = P\left(\left\{ Z \leq 0 \right\}\right) = 0$ as well and hence $P\left(\left\{ Z = 0 \right\}\right) = 0.$

    On the other hand, for $z\geq 1,$ following the derivation to~(\ref{(43)}) we note that $\left\{ \mu_F\left(\left(-\infty, X\right]\right) \leq 1 \right\} = \Omega$ so that 
    \begin{eqnarray}
        F_Z\left(z\right) = P\left(\left\{ Z \leq z \right\}\right) &=& \int_{\mathbb{R}} \left(H\left(\frac{z - \mu_F\left(\left(-\infty, x\right)\right)}{\mu_F\left(\left\{ x \right\}\right)}\right)\mathbb{I}_{D_F}\left(x\right) + \mathbb{I}_{D_F^\mathsf{c}}\left(x\right)\right) \mu_F\left(dx\right), \nonumber
    \end{eqnarray} whereas for~(\ref{(44)}) by (ii) of $5$ again $\left\{ \mu_F\left(\left(-\infty, X\right]\right) < 1 \right\} \cap \left\{ X \not\in D_F \right\} = \left\{ \mu_F\left(\left(-\infty, X\right]\right) \leq 1 \right\} \cap \left\{ X \not\in D_F \right\} = \left\{ X \not\in D_F \right\}\ \left[\mu_F\right]$ which leads to
    \begin{eqnarray}
        F_Z\left(z-\right) = P\left(\left\{ Z < z \right\}\right) &=& \int_{\mathbb{R}} \left(H\left(\left[\frac{z - \mu_F\left(\left(-\infty, x\right)\right)}{\mu_F\left(\left\{ x \right\}\right)}\right]-\right)\mathbb{I}_{D_F}\left(x\right) + \mathbb{I}_{D_F^\mathsf{c}}\left(x\right)\right) \mu_F\left(dx\right). \nonumber
    \end{eqnarray}

    Combining two expressions above we know that~(\ref{(46)}) holds for $z \geq 1$ as well. Since the condition $H\left(0\right) = 0$ necessarily implies $D_H \subseteq \left(0, +\infty\right), z \in D_{F_Z}$ if and only if there exists an $x \in D_F$ such that $H\left(\frac{z - \mu_F\left(\left(-\infty, x\right)\right)}{\mu_F\left(\left\{ x \right\}\right)}\right) - H\left(\left[\frac{z - \mu_F\left(\left(-\infty, x\right)\right)}{\mu_F\left(\left\{ x \right\}\right)}\right]-\right) > 0,$ 
    or equivalently $\frac{z - \mu_F\left(\left(-\infty, x\right)\right)}{\mu_F\left(\left\{ x \right\}\right)} \in D_H.$ To restate this fact in a simple form we then have the set equality as in~(\ref{(47)}).
\end{proof}

\section{Proof of Lemma~\ref{lma::3}} \label{pflma::3}

\begin{proof}
    By Theorem $2.8.18$ on pp. $152$ of Section $2.8$ along with our note about directional limited property in $6$ above, 
    by taking the singleton $\left\{X\right\}$, for which $\rho$ is directionally limited at, as the countable family that unions to $X$ required there.

    Denote $n=\dim_F(X)<+\infty$, by Corollary 5 in Section $13.3$ on pp. $260$ of \cite{royden1988real}
    or even Theorem $1.21$ along with the comments in Section $1.19$ on pp. $16$ of \cite{rudin1991functional}
    that the following claim holds so long as $(X, \tau)$ is a topological vector space, including the special case of a normed vector space $\left(X, \left\lVert\cdot \right\rVert\right)$, 
    for each Hamel basis $B=\left\{e_i\right\}_{1\leq i \leq n}$ of $X$, the map
    \begin{align}
        \varphi_B: F^n &\rightarrow X \nonumber \\
    (k_1, k_2, \cdots, k_n) &\mapsto \sum_{i=1}^nk_ie_i \nonumber
    \end{align} yields a homeomorphic isomorphism between $\left(F^n, \left\lVert \cdot \right\rVert_p\right)$ and $\left(X, \left\lVert\cdot \right\rVert\right)$ with any choice of $p\in [1, +\infty]$.

    Since $F\in \left\{\mathbb{R}, \mathbb{C}\right\}$ which is separable, its finite Cartesian product $\left(F^n, \left\lVert \cdot \right\rVert_\infty\right)$ is separable for its induced sup metric, 
    and hence so is $\left(X, \left\lVert\cdot \right\rVert\right)$. We thus obtain the desired conclusion for the first bullet.

    For the second bullet, first of all by Example $11.4$ on pp. $168$ and also Example $10.1$ on pp. $158$ 
    of \cite{billingsley1995probability}, we know that 
    $\mathcal{R}_n$ is a semi-ring, which generates the Borel sets on $\left(\mathbb{R}^n, \tau\right)$: $\sigma\left(\mathcal{R}_n\right)=\sigma\left(\tau\right)$.

    Then by the proof of Theorem $12.5$ from pp. $178$--$179$ we know that the defined $\mu_F$ is finitely additive and countably sub-additive on $\mathcal{R}_n$. 
    Through the proof of Theorem $11.3,$ the Carath\'eodory extension theorem, we know $\sigma\left(\tau\right)=\sigma\left(\mathcal{R}_n\right)\subseteq \mathcal{L}_{\mu^\star}$ 
    such that $\mu^\star_F$ is an outer measure on $\mathbb{R}^n$ for which $\mu^\star_F|_{\mathcal{L}_{\mu^\star_F}}$ is a measure and also $\mu^\star_F|_{\mathcal{R}_n}=\mu_F$.

    This discussion confirms the (i) of $3$ in the groups of definitions above, and now it suffices to establish the Borel regularized version of $\mu^\star_F$, $\nu^\star_F$, coincides with itself so that we obtain (ii) of $3.$
    By the monotonicity of $\mu^\star_F$ on $\mathcal{P}\left(\mathbb{R}^n\right)$, 
    for each $E\in \mathcal{P}\left(\mathbb{R}^n\right)$, $\mu^\star_F\left(E\right)$ is a lower bound of $\left\{ \mu^\star_F\left(B\right) : E \subseteq B, B \in \sigma\left(\tau\right) \right\},$ 
    and hence $\mu^\star_F\left(E\right)\leq \nu^\star_F\left(E\right)$.

    On the other hand, for each $\left\{E_n\right\}_{n=1}^\infty \subseteq \mathcal{R}_n, E\subseteq \cup_{n=1}^\infty E_n$, we have
    \begin{eqnarray}
        \mu^\star_F\left(\cup_{n=1}^\infty E_n\right) &\leq& \sum_{n=1}^\infty \mu^\star_F\left(E_n\right) \nonumber \\
                                                      &=& \sum_{n=1}^\infty \mu_F\left(E_n\right), \label{(5)}
    \end{eqnarray} where the first inequality is due to the $\sigma$-subadditivity of $\mu^\star_F$ and second by $\mu^\star_F|_{\mathcal{R}_n}=\mu_F$.

    Observing that $\mathcal{R}_n\subseteq \sigma\left(\mathcal{R}_n\right)=\sigma\left(\tau\right)$ so that $\cup_{n=1}^\infty E_n\in \sigma\left(\tau\right)$, inequality~(\ref{(5)}) above simply tells us
    \begin{eqnarray}
        \nu^\star_F\left(E\right) = \inf \left\{ \mu^\star_F\left(B\right) : E \subseteq B, B \in \sigma\left(\tau\right) \right\} &\leq& \inf \left\{ \mu_F^\star\left(\cup_{n = 1}^\infty E_n\right) : \left\{ E_n \right\}_{n = 1}^\infty \subseteq \mathcal{R}_n, E \subseteq \cup_{n = 1}^\infty E_n \right\} \nonumber \\
                                                                                                                                   &\leq& \inf \left\{ \sum_{n = 1}^\infty \mu_F\left(E_n\right) : \left\{ E_n \right\}_{n = 1}^\infty \subseteq \mathcal{R}_n, E \subseteq \cup_{n = 1}^\infty E_n \right\} \nonumber \\
                                                                                                                                   &=& \mu^\star_F\left(E\right), \nonumber
    \end{eqnarray} implying $\mu^\star_F=\nu^\star_F$ and thus concludes the Borel regularity of $\mu^\star_F$ on $\left(\mathbb{R}^n, \tau\right)$.

    For the ``In particular'' part of the claim, by the discussion above the statement of Theorem $12.5$ for each finite measure $\nu : \sigma\left(\tau\right) \rightarrow \left[0, +\infty\right),$ 
    we shall associate it with 
        \begin{align}
            F_\nu : \mathbb{R}^n &\rightarrow \mathbb{R} \nonumber \\
                    \mathbf{x} &\mapsto \nu \left(\times_{i=1}^n(-\infty, x_i]\right).\nonumber
        \end{align}

    Given any sequence $\left\{\mathbf{x}^{(k)}\right\}_{k=1}^\infty\subseteq \mathbb{R}^n$ along with a point $\mathbf{x}\in \mathbb{R}^n$ such that 
    $\forall i = 1, 2, \cdots, n, x^{(k)}_i\downarrow x_i$, we know $\times_{i=1}^n(-\infty, x_i^{(k)}]\downarrow \times_{i=1}^n(-\infty, x_i]$ as $K\rightarrow \infty$
    and thus as $\nu$ is finite so that it is continuous from above at $\times_{i=1}^n(-\infty, x_i]$ we know $F\left(\mathbf{x}^{\left(k\right)}\right) = \nu\left(\times_{i = 1}^n \left(-\infty, x_i^{\left(k\right)}\right]\right) \downarrow \nu\left(\times_{i = 1}^n \left(-\infty, x_i\right] \right) = F\left(\mathbf{x}\right)$ as $k \rightarrow \infty.$

    Additionally, for each $R\in \mathcal{R}_n$, $R=\times_{i=1}^n (a_i, b_i]$ for some $-\infty<a_i\leq b_i<+\infty$, $i=1, 2, \cdots, n$, then
    \begin{eqnarray}
        0\leq \nu\left(R\right) &=& \int_{\mathbb{R}^n} \mathbb{I}_R d\nu = \int_{\mathbb{R}^n} \prod_{i=1}^n \mathbb{I}_{\left(a_i, b_i\right]} d\nu \nonumber \\
                                &=& \int_{\mathbb{R}^n} \prod_{i = 1}^n \mathbb{I}_{\left(-\infty, b_i\right] - \left(-\infty, a_i\right]} d\nu \nonumber \\
    \left(\left(-\infty, a_i\right] \subseteq \left(-\infty, b_i\right] \right) \Rightarrow &=& \int_{\mathbb{R}^n} \prod_{i = 1}^n \left(\mathbb{I}_{\left(-\infty, b_i\right]} - \mathbb{I}_{\left(-\infty, a_i\right]}\right) d\nu \nonumber \\
                                &=& \int_{\mathbb{R}^n} \sum_{\mathbf{x} \in \times_{i = 1}^n \left\{ a_i, b_i \right\}} \left(-1\right)^{\# \left\{ i : x_i = a_i \right\}} \mathbb{I}_{\times_{i = 1}^n \left(-\infty, x_i\right]} d\nu \nonumber \\
    \left(\mathrm{linearity}, \nu\ \mathrm{is\ finite}\right) &=& \sum_{\mathbf{x} \in \times_{i = 1}^n \left\{ a_i, b_i \right\}} \left(-1\right)^{\# \left\{i : x_i = a_i\right\}} \nu\left(\times_{i = 1}^n \left(-\infty, x_i\right]\right) \nonumber \\
                                &=& \Delta_R F_\nu. \label{(6)}
    \end{eqnarray}

    Now we know $F_\nu$ in fact satisfies conditions (i) and (ii) for the main addendum of this claim, 
    and also by definition in the statement of this second claim along with~(\ref{(6)}) above 
    we know $\mu_{F_\nu} = \nu|_{\mathcal{R}_n}.$ By the proof through Carath\'eodory extension theorem just now, $\mu_{F_\nu}^\star|_{\sigma\left(\tau\right)}$ is also a measure on $\sigma\left(\tau\right)$ such that $\mu_{F_\nu}^\star|_{\mathcal{R}_n} = \mu_{F_\nu}.$ 
    Then we actually obtain $\mu_{F_\nu}^\star|_{\mathcal{R}_n} = \nu|_{\mathcal{R}_n}$ 
    so by the uniqueness in the Carath\'eodory extension theorem we conclude this ``In particular'' part of the proof that $\mu_{F_\nu}^\star|_{\sigma\left(\tau\right)} = \nu,$ 
    for which $\mu_{F_\nu}^\star$ is Borel regular.

    Last but not least, first notice that given any bounded set $E\subseteq \mathbb{R}^n$ we can find a rectangle $R\in \mathcal{R}_n$ such that $E\subseteq R$ 
    then by monotonicity of $\mu_F^\star$ we have $\mu_F^\star(E)\leq \mu_F^\star(R)$. Furthermore, as $\mu_F^\star|_{\mathcal{R}_n}=\mu_F$, this implies $\mu_F^\star(R)=\mu_F(R)=\Delta_RF<+\infty$ 
    and therefore with this chain of inequality we obtain $\mu_F^\star(E)<+\infty$.

    Henceforth, $\mu^\star_F$ is a Borel regular outer measure that assigns finite value to finite diameter subsets of $\mathbb{R}^n$. Additionally, for each $p\in[1, +\infty]$, $\left(\mathbb{R}^n, \left\lVert\cdot \right\rVert_p\right)$ is an $n$-dimensional normed linear space 
    so appealing to the first item proved $\mathfrak{V}_p$ thus defined is a $\mu^\star_F$-Vitali relation.

    With the above and also the $\mu^\star_F|_{\mathcal{P}\left(X\right)\cap E}$-integrability condition on $f$ for each bounded $\mu_F^\star$-measurable $E\in \mathcal{P}\left(\mathbb{R}^n\right)$ 
    allows us to resort to Theorem $2.9.8$ on pp. $156$ in Section $2.9$ of \cite{federer2014geometric}
    to end this proof.
\end{proof}

\end{appendices}

\clearpage


\begin{thebibliography}{}

    \bibitem[Billingsley, 1995]{billingsley1995probability}
    Billingsley, P. (1995).
    \newblock {\em Probability and Measure}.
    \newblock Wiley Series in Probability and Mathematical Statistics. John Wiley \& Sons, New York, NY, United States of America, 3rd edition.
    
    \bibitem[Brockwell, 2007]{brockwell2007universal}
    Brockwell, A. (2007).
    \newblock Universal residuals: A multivariate transformation.
    \newblock {\em Statistics \& Probability Letters}, 77(14):1473--1478.
    
    \bibitem[Cai et~al., 2022]{cai2022distribution}
    Cai, Z., Li, R., and Zhang, Y. (2022).
    \newblock A distribution free conditional independence test with applications to causal discovery.
    \newblock {\em Journal of Machine Learning Research}, 23(85):1--41.
    
    \bibitem[Federer, 1996]{federer2014geometric}
    Federer, H. (1996).
    \newblock {\em Geometric Measure Theory}, volume 153 of {\em Grundlehren der mathematischen Wissenschaften}.
    \newblock Springer, Berlin, Heidelberg, Germany.
    
    \bibitem[Klenke, 2020]{klenke2013probability}
    Klenke, A. (2020).
    \newblock {\em Probability Theory: A Comprehensive Course}.
    \newblock Universitext. Springer, Cham, Switzerland, 3rd edition.
    
    \bibitem[Royden and Fitzpatrick, 2010]{royden1988real}
    Royden, H.~L. and Fitzpatrick, P.~M. (2010).
    \newblock {\em Real Analysis}, volume~32.
    \newblock Pearson Education, 4th edition.
    
    \bibitem[Rudin, 1991]{rudin1991functional}
    Rudin, W. (1991).
    \newblock {\em Functional Analysis}.
    \newblock International Series in Pure and Applied Mathematics. McGraw-Hill, New York, NY, United States of America, 2nd edition.
    
    \bibitem[Sz{\'e}kely et~al., 2007]{szekely2007measuring}
    Sz{\'e}kely, G.~J., Rizzo, M.~L., and Bakirov, N.~K. (2007).
    \newblock Measuring and testing dependence by correlation of distances.
    \newblock {\em The Annals of Statistics}, 35(6):2769--2794.
    
    \bibitem[Titchmarsh, 1939]{titchmarsh1939theory}
    Titchmarsh, E.~C. (1939).
    \newblock {\em The Theory of Functions}.
    \newblock Oxford University Press, London E.C.4, Great Britain, 2nd edition.
    
\end{thebibliography}
\end{document}